\newtheorem{thm}{Theorem}
\newtheorem{defn}[thm]{Definition}
\newtheorem{lem}[thm]{Lemma}
\newtheorem{prop}[thm]{Proposition}
\newcolumntype{C}[1]{>{\centering\arraybackslash}p{#1}}
\newcolumntype{L}[1]{>{\arraybackslash}p{#1}}
\def\fCenter{{\mbox{$\ \vdash\ $}}}
\newcommand{\scs}{\scriptsize}
\newcommand{\fns}{\footnotesize}
\def\mc{\multicolumn}
\newcommand{\mb}{\mathbb}
\newcommand{\mcl}{\mathcal}
\newcommand{\xneg}{\ensuremath{\neg}\xspace}
\newcommand{\xtop}{\ensuremath{\top}\xspace}
\newcommand{\xbot}{\ensuremath{\bot}\xspace}
\newcommand{\xand}{\ensuremath{\wedge}\xspace}
\newcommand{\xor}{\ensuremath{\vee}\xspace}
\newcommand{\xrarr}{\ensuremath{\rightarrow}\xspace}
\newcommand{\XNEG}{\ensuremath{\:\tilde{\neg}}\xspace}
\newcommand{\XTOP}{\hat{\top}}
\newcommand{\XBOT}{\ensuremath{\check{\bot}}\xspace}
\newcommand{\XAND}{\ensuremath{\:\hat{\wedge}\:}\xspace}
\newcommand{\XOR}{\ensuremath{\:\check{\vee}\:}\xspace}
\newcommand{\dneg}{\ensuremath{{\sim}}\xspace}
\newcommand{\rdneg}{\ensuremath{\neg}\xspace}
\newcommand{\dtop}{\ensuremath{1}\xspace}
\newcommand{\dbot}{\ensuremath{0}\xspace}
\newcommand{\dand}{\ensuremath{\cap}\xspace}
\newcommand{\dor}{\ensuremath{\cup}\xspace}
\newcommand{\DNEG}{\ensuremath{\:\tilde{\sim}}\xspace}
\newcommand{\DTOP}{\ensuremath{\hat{1}}\xspace}
\newcommand{\DBOT}{\ensuremath{\check{0}}\xspace}
\newcommand{\DAND}{\ensuremath{\:\hat{\cap}\:}\xspace}
\newcommand{\DOR}{\ensuremath{\:\check{\cup}\:}\xspace}
\newcommand{\XDIANU}{\ensuremath{\langle\hat{\nu}\rangle}\xspace}
\newcommand{\xdianu}{\ensuremath{\langle\nu\rangle}\xspace}
\newcommand{\DBOXUN}{\ensuremath{[\check{\rotatebox[origin=c]{180}{$\nu$}}]}\xspace}
\newcommand{\dboxun}{\ensuremath{[\rotatebox[origin=c]{180}{$\nu$}]}\xspace}
\newcommand{\DDIANI}{\ensuremath{\langle\hat{\ni}\rangle}\xspace}
\newcommand{\ddiani}{\ensuremath{\langle\ni\rangle}\xspace}
\newcommand{\XBOXNIN}{\ensuremath{[\check{\not\in}]}\xspace}
\newcommand{\xboxnin}{\ensuremath{[\not\in]}\xspace}
\newcommand{\XDIAIN}{\ensuremath{\langle\hat{\in}\rangle}\xspace}
\newcommand{\xdiain}{\ensuremath{\langle\in\rangle}\xspace}
\newcommand{\DBOXNI}{\ensuremath{[\check\ni]}\xspace}
\newcommand{\dboxni}{\ensuremath{[\ni]}\xspace}
\newcommand{\DDIANNI}{\ensuremath{\langle\hat{\not\ni}\rangle}\xspace}
\newcommand{\ddianni}{\ensuremath{\langle\not\ni\rangle}\xspace}
\newcommand{\DDIAUNC}{\ensuremath{\langle\hat{\rotatebox[origin=c]{180}{$\nu$}}^c\rangle}\xspace}
\newcommand{\ddiaunc}{\ensuremath{\langle\rotatebox[origin=c]{180}{$\nu$}^c\rangle}\xspace}
\newcommand{\XBOXNUC}{\ensuremath{[\check{\nu^c}]}\xspace}
\newcommand{\xboxnuc}{\ensuremath{[\nu^c]}\xspace}
\newcommand{\MTRA}{\ensuremath{\check{\,\vartriangleright\,}}\xspace}
\newcommand{\mtra}{\ensuremath{\vartriangleright}\xspace}
\newcommand{\MTBRA}{\ensuremath{\check{\,\blacktriangleright\,}}\xspace}
\newcommand{\mtbra}{\ensuremath{\blacktriangleright}\xspace}
\newcommand{\MTAND}{\ensuremath{\,\hat{\blacktriangle}\,}\xspace}
\newcommand{\mtAND}{\ensuremath{\blacktriangle}\xspace}
\newcommand{\DRHDNNI}{\ensuremath{[\check{\not\ni}\rangle}\xspace}
\newcommand{\drhdnni}{\ensuremath{[\not\ni\rangle}\xspace}
\newcommand{\XRHDNIN}{\ensuremath{[\check{\not\in}\rangle}\xspace}
\newcommand{\xrhdnin}{\ensuremath{[\not\in\rangle}\xspace}
\newcommand{\abla}{\nabla}
\newcommand{\mbf}{\mathbf}
\begin{document}
\title{Non-normal logics: semantic analysis and proof theory
}
%
\author{Jinsheng Chen\inst{2} 
\and
Giuseppe Greco\inst{1}\thanks{The research of the second author is supported by a NWO grant under the scope of the project ``A composition calculus for vector-based semantic modelling with a localization for Dutch'' (360-89-070).}
\and
Alessandra Palmigiano \inst{2,3}\thanks{The research of the third and fourth author is supported by the NWO Vidi grant 016.138.314, the NWO Aspasia grant 015.008.054, and a Delft Technology Fellowship awarded to the third author}
\and
Apostolos Tzimoulis\inst{2}
}
\authorrunning{Chen,  Greco, Palmigiano, Tzimoulis}
%
\institute{
University of Utrecht, the Netherlands
\and 
Vrije Universiteit Amsterdam, the Netherlands
\and 
Department of Mathematics and Applied Mathematics, University of Johannesburg, South Africa
}
\maketitle              
\begin{abstract}
We introduce proper display calculi for  basic monotonic modal logic, the conditional logic CK and a number of their axiomatic extensions. These calculi are sound, complete, conservative and enjoy cut elimination and subformula property. Our proposal applies the multi-type methodology in the design of proper display calculi, starting from a semantic analysis which motivates syntactic translations from single-type non-normal modal logics to multi-type normal poly-modal logics.
\keywords{ Monotonic modal logic \and Conditional logic \and  Proper display calculi.}
\end{abstract}
%
%
\section{Introduction}
By {\em non-normal logics} we understand in this paper those propositional logics algebraically captured by varieties of {\em Boolean algebra expansions}, i.e.~algebras $\mathbb{A} = (\mathbb{B}, \mathcal{F}^\mathbb{A},$ $\mathcal{G}^\mathbb{A})$ such that $\mathbb{B}$ is a Boolean algebra, and $\mathcal{F}^\mathbb{A}$ and $\mathcal{G}^\mathbb{A}$ are finite, possibly empty families of operations on $\mathbb{B}$ in which the requirement is dropped that each operation in $\mathcal{F}^\mathbb{A}$ be finitely join-preserving or meet-reversing  in each coordinate and  each operation in $\mathcal{G}^\mathbb{A}$ be finitely meet-preserving or join-reversing  in each coordinate. Very well-known examples of non-normal logics are {\em monotonic modal logic} \cite{chellas1980modal} and {\em conditional logic} \cite{nute2012topics,chellas1975basic}, which have been intensely investigated, since they capture key aspects of agents' reasoning, such as the epistemic \cite{van2011dynamic}, strategic \cite{pauly2003game,pauly2002modal}, and hypothetical \cite{gabbay2000conditional,lewis2013counterfactuals}.

Non-normal logics have been extensively investigated both with model-theoretic tools \cite{hansen2003monotonic} and with proof-theoretic tools \cite{negri2017proof,Olivetti2007ASC,GilMaf15}. Specific to proof theory, the main challenge is to endow non-normal logics with analytic calculi which can be modularly expanded with additional rules so as to uniformly capture wide classes of axiomatic extensions of the basic frameworks, while preserving key properties such as cut elimination. In this paper, which builds and expands on \cite{chen2019non}, we propose a method to achieve this goal. We will illustrate this method for the two specific signatures of monotonic modal logic and conditional logic. 

Our starting point is the observation, very well-known e.g.~from \cite{hansen2003monotonic}, that, under the interpretation of the modal connective of monotonic modal logic in neighbourhood frames $\mathbb{F} = (W, \nu)$, the monotonic `box' operation can be understood as the composition of a {\em normal} (i.e.~finitely join-preserving) semantic diamond $\xdianu$ and a {\em normal} (i.e.~finitely meet-preserving) semantic box $\dboxni$. The binary relations $R_\nu$ and $R_\ni$ corresponding to these two normal operators are not defined on one and the same domain, but span over two domains, namely $R_\nu\subseteq W\times \mathcal{P}(W)$ is s.t.~$w R_\nu X$ iff $X\in \nu(w)$ and $R_\ni\subseteq \mathcal{P}(W)\times W$ is s.t.~$X R_\ni w$ iff $w\in X$ (cf.~\cite[Definition 5.7]{hansen2003monotonic}, see also \cite{kracht1999normal,gasquet1996classical}). 
We refine and expand these observations so as to: (a) introduce a semantic environment of two-sorted Kripke frames (cf.~Definition \ref{def:2sorted Kripke frame}) and their heterogeneous algebras  (cf.~Definition \ref{def:heterogeneous algebras}); (b) outline a network of discrete dualities and correspondences among these semantic structures and the algebras and frames for monotone modal logic and conditional logic (cf.~Propositions \ref{prop:dduality single type},  \ref{prop:dduality multi-type}, \ref{prop:alg characterization of single into multi}, \ref{prop:adjunction-frames}); (c) based on these semantic relationships, introduce multi-type {\em normal} logics into which the original non-normal logics can be embedded via suitable translations (cf.~Section \ref{sec:embedding}) following a methodology which was successful in several other cases \cite{8134377,frittella2016multi,frittella2016proof,Inquisitive,GLMP18,KriPalNac18,LogicsForRoughConceptAnalysis,bilattice,GrecoPalmigianoLatticeLogic,apostolos2018}; (d) retrieve well-known dual characterization results  for axiomatic extensions of monotone modal logic and conditional logics as instances of general algorithmic correspondence theory for normal  (multi-type)  LE-logics applied to the translated axioms (cf.~Section \ref{sec:ALBA runs}); (e) extract analytic structural rules from the computations of the first-order correspondents of the translated axioms, so that, again by general results on  proper display calculi \cite{greco2016unified} (which, as discussed in \cite{bilkova2018logic}, can be applied also to multi-type logical frameworks) the resulting calculi are sound, complete, conservative and enjoy cut elimination and subformula property.

\paragraph{Structure of the paper} In Section \ref{sec: preliminaries}, we collect well-known definitions and facts about monotone modal logic and conditional logic, their algebraic and state-based semantics, and the connection between the two. In Section \ref{sec: semantic analysis}, we introduce the multi-type environment (both in the form of heterogeneous algebras and of multi-type Kripke frames) which will provide the semantic justification for the two-sorted modal logics introduced in Section \ref{sec:embedding}, as well as for the syntactic translation of the original languages of monotone modal logic and conditional logic into suitable (multi-type) normal modal languages. In Section \ref{section:correspondence}, the theory of unified correspondence is applied to this two-sorted  environment to establish a Sahlqvist-type correspondence framework for  monotone modal logic and conditional logic which encompasses and extends the extant correspondence-theoretic results for these logics. In Section \ref{sec:calculi}, proper (multi-type) display calculi are introduced for the basic two sorted normal modal languages and for some of their best known extensions. The main properties of these calculi are discussed in Section  \ref{sec:properties}. Conclusions and further directions are discussed in Section \ref{sec: Conclusions}. 
\section{Preliminaries}
\label{sec: preliminaries}
\paragraph{Notation.}
\label{ssec:notation}
Throughout the paper, 
the superscript $(\cdot)^c$ denotes the relative complement of the subset of a given set. When the given set is a singleton $\{x\}$, we will write $x^c$ instead of $\{x\}^c$.  
For any binary relation $R\subseteq S\times T$, let $R^{-1}\subseteq T\times S$ be the \emph{converse relation} of $R$, i.e.~$tR^{-1}s$ iff $sRt$. 
 For any $S'\subseteq S$ and $T'\subseteq T$, we  let $R[S']: = \{t\in T\mid (s, t)\in R \mbox{ for some } s\in S'\}$ and $R^{-1}[T']: = \{s\in S\mid (s, t)\in R \mbox{ for some } t\in T'\}$. As usual, we write $R[s]$ and $R^{-1}[t]$ in place of $R[\{s\}]$ and $R^{-1}[\{t\}]$, respectively. 
For any ternary relation $R\subseteq S\times T\times U$ 
and subsets $S'\subseteq S$, $T'\subseteq T$, and $U'\subseteq U$, we also let
{\small{
\begin{itemize}
\item $R^{(0)}[T', U'] =\{s\in S\mid\  \exists t\exists u(R(s, t, u)\ \&\  t\in T' \ \&\ u\in U')\},$
\item $R^{(1)}[S', U'] =\{t\in T\mid\  \exists s\exists u(R(s, t, u)\ \&\  s\in S' \ \&\ u\in U')\},$
\item $R^{(2)}[S', T'] =\{u\in U\mid\  \exists s\exists t(R(s, t, u)\ \&\  s\in S' \ \&\ t\in T')\}.$
\end{itemize}
}}
Any binary relation $R\subseteq S\times T$ gives rise to the
  {\em modal operators} $\langle R\rangle, [R], [R\rangle, \langle R] :\mathcal{P}(T)\to \mathcal{P}(S)$ s.t.~for any $T'\subseteq T$ 
  {\small{
  \begin{itemize}
\item $\langle R\rangle T' : = R^{-1}[T'] = \{s\in S\mid \exists t( s R t \ \&\  t\in T')\}$;
\item  $ [R]T': = (R^{-1}[{T'}^c])^c = \{s\in S\mid \forall t( s R t \ \Rightarrow\  t\in T')\}$; 
\item  $ [R\rangle T': = (R^{-1}[T'])^c = \{s\in S\mid \forall t( s R t \ \Rightarrow\  t\notin T')\}$;
\item $\langle R] T' : = R^{-1}[{T'}^c] = \{s\in S\mid \exists t( s R t \ \&\  t\notin T')\}$.
\end{itemize}
}}
\noindent By construction, these modal operators are normal.  In particular, $\langle R\rangle$ is completely join-preserving, $[R]$ is completely meet-preserving, $[R\rangle$ is completely join-reversing and $\langle R]$ is completely meet-reversing. Hence, their adjoint maps exist and coincide with $[R^{-1}]\langle R^{-1}\rangle,  [R^{-1}\rangle, \langle R^{-1}]: \mathcal{P}(S)\to \mathcal{P}(T)$, respectively. That is, for any $T'\subseteq T$ and $S'\subseteq S$,
\begin{center}
\begin{tabular}{r c l}
$\langle R\rangle T'\subseteq S'\quad$& iff & $\quad T' \subseteq [R^{-1}] S', $\\
$  S' \subseteq [R] T'\quad$& iff &  $\quad\langle R^{-1}\rangle S' \subseteq T'$,\\ 
$S' \subseteq [R\rangle T'\quad$& iff & $\quad T' \subseteq [R^{-1}\rangle S'$\\
$\langle R] T' \subseteq S'\quad$& iff & $\quad \langle R^{-1}] S' \subseteq T'.$\\
\end{tabular}
\end{center}

Any ternary relation $R\subseteq S\times T\times U$ gives rise to binary  modal operators \[\mtra_R: \mathcal{P}(T)\times \mathcal{P}(U)\to \mathcal{P}(S) \quad \mtAND_R: \mathcal{P}(T)\times \mathcal{P}(S)\to \mathcal{P}(U)\quad \mtbra_R: \mathcal{P}(S)\times \mathcal{P}(U)\to \mathcal{P}(T)\]  s.t.~for any $S'\subseteq S$, $T'\subseteq T$, and $U'\subseteq U$, 
  {\small{
  \begin{itemize}
\item $T' \mtra_R U': =  (R^{(0)}[T', {U'}^c])^c =\{s\in S\mid\  \forall t\forall u(R(s,t,u)\ \&\  t\in T' \Rightarrow u\in U')\}$;
\item $T' \mtAND_R S': =  R^{(2)}[T', S'] =\{u\in U\mid\  \exists t\exists s(R(s,t,u)\ \&\  t\in T' \ \&\  s\in S')\}$;
\item $S' \mtbra_R U': =  (R^{(1)}[S', {U'}^c])^c =\{t\in T\mid\  \forall s\forall u(R(s,t,u)\ \&\  s\in S' \Rightarrow u\in U')\}$.
\end{itemize}
}}
The stipulations above guarantee that these modal operators are normal.  In particular,   $\mtra_R$ and $\mtbra_R$ are completely join-reversing in their first coordinate and completely meet-preserving in their second coordinate, and $\mtAND_R$ is completely join-preserving in both coordinates. These three maps are residual to each other, i.e.~for any $S'\subseteq S$, $T'\subseteq T$, and $U'\subseteq U$,
\[S'\subseteq T' \mtra_R U'\quad\text{ iff }\quad T' \mtAND_R S'\subseteq U'\quad\text{ iff }\quad T'\subseteq S' \mtbra_R U'.\]

\subsection{Basic monotonic modal logic and conditional logic}
\label{ssec:prelim}

\paragraph{Syntax.} For a countable set of propositional variables $\mathsf{Prop}$, the languages $\mathcal{L}_{\abla}$ and $\mathcal{L}_{>}$ of monotonic modal logic and conditional logic over $\mathsf{Prop}$ are defined as follows:
\[\mathcal{L}_{\abla}\ni \varphi ::= p\mid \neg \varphi \mid  \varphi \land \varphi \mid \abla \varphi\quad\quad\quad\quad\mathcal{L}_{>}\ni \varphi ::= p\mid \neg \varphi \mid  \varphi \land \varphi \mid  \varphi> \varphi.\]
The connectives $\top, \land,\lor, \to$ and $\leftrightarrow$ are defined as usual. 
The {\em basic monotone modal logic} $\mathbf{L}_{\abla}$ (resp.~{\em basic conditional logic} $\mathbf{L}_{>}$) is a set   of $\mathcal{L}_{\abla}$-formulas  (resp.~$\mathcal{L}_{>}$-formulas) containing the  axioms of classical propositional logic and closed under modus ponens, uniform substitution and the following rule(s) $M$ (resp. $RCEA$ and $RCK_n$ for all $n \ge 0$):
{\footnotesize{
\begin{center}
\AXC{$\varphi \rightarrow \psi$}
\LeftLabel{\tiny{M}}
\UIC{$\abla \varphi \to \abla \psi$}
\DP
\ \ \ 
\AXC{$\varphi \leftrightarrow \psi$}
\LeftLabel{\tiny{RCEA}}
\UIC{$(\varphi > \chi) \leftrightarrow (\psi > \chi)$}
\DP
\ \ \ 
\AXC{$\varphi_1 \wedge{\! \ldots \!}\wedge \varphi_n \rightarrow \psi$}
\LeftLabel{\tiny{RCK$_n$}}
\UIC{$(\chi > \varphi_1) \wedge{\! \ldots \!}\wedge (\chi > \varphi_n) \rightarrow (\chi > \psi)$}
\DP
\end{center}
}}

\paragraph{Algebraic semantics.} A {\em monotone Boolean algebra expansion}, abbreviated as {\em m-algebra} (resp.~{\em conditional algebra}, abbreviated as {\em c-algebra}) is a pair $\mathbb{A} = (\mathbb{B}, \abla^{\mathbb{A}})$ (resp.~$\mathbb{A} = (\mathbb{B}, >^{\mathbb{A}})$)  s.t.~$\mathbb{B}$ is a Boolean algebra and  $\abla^{\mathbb{A}}$ is a unary monotone operation on $\mathbb{B}$ (resp.~$>^{\mathbb{A}}$ is a binary operation on $\mathbb{B}$ which is finitely meet-preserving in its second coordinate). 
Such an  m-algebra (resp.~c-algebra)  is \emph{perfect} if $\mathbb{B}$ is a complete and atomic Boolean algebra  (and, in the c-algebra case,  $>^\mathbb{A}$  is completely meet-preserving in its second coordinate). Hence, the underlying Boolean algebra of any perfect m-algebra (resp.~c-algebra)  can be identified with the powerset algebra $\mathcal{P}(W)$ for some set $W$.

Interpretation of formulas in algebras under assignments $h:\mathcal{L}_{\abla}\to \mathbb{A}$ (resp.~$h:\mathcal{L}_{>}\to \mathbb{A}$)  and validity of formulas in algebras (in symbols: $\mathbb{A}\models\varphi$) are defined as usual. By a routine Lindenbaum-Tarski construction one can show that $\mathbf{L}_{\abla}$ (resp.~$\mathbf{L}_{>}$) is sound and complete w.r.t.~the class of m-algebras $V_m$ (resp.~c-algebras $V_c$). 

\paragraph{Canonical extensions.} 

 The {\em canonical extension} of an m-algebra (resp.~c-algebra)  $\mathbb{A}$ is  $\mathbb{A}^\delta: = (\mathbb{B}^\delta, \abla^{\sigma})$ (resp.~$\mathbb{A}^\delta: = (\mathbb{B}^\delta, >^{\pi})$), where $\mathbb{B}^\delta\cong \mathcal{P}(Ult(\mathbb{B}))$, with $Ult(\mathbb{B})$ denoting the set of the ultrafilters of $\mathbb{B}$, is the canonical extension of $\mathbb{B}$ \cite{jonsson1951boolean}, and $\abla^{\sigma}$ (resp.~$>^{\pi}$) is the $\sigma$-extension of $\abla^{\mathbb{A}}$ (resp.~the $\pi$-extension of $>^{\mathbb{A}}$). Let us recall that for all $u, u_1, u_2\in \mathbb{B}^\delta$,
 \[\abla^{\sigma} u: = \bigvee\{\bigwedge \{\abla a\mid a\in \mathbb{B} \text{ and } k\leq a\} \mid k\in K(\mathbb{B}^\delta) \text{ and } k\leq u\},\]
  \[u_1>^{\pi} u_2: = \bigwedge\{\bigvee \{ a_1> a_2\mid a_i\in \mathbb{B} \text{ and } o_i\leq a_i\leq k_i\} \mid k_i\in K(\mathbb{B}^\delta),  o_i\in O(\mathbb{B}^\delta) \text{ and } k_i\leq u_i\leq o_i\},\]
where $K(\mathbb{B}^\delta)$ and $ O(\mathbb{B}^\delta) $ respectively denote the join-closure and the meet-closure of $\mathbb{B}$ in $\mathbb{B}^\delta$ under the canonical embedding, mapping each $a\in \mathbb{B}$ to $\{U\in Ult(\mathbb{B})\mid a\in U\}$.
 
By definition and general results on canonical extensions of maps (cf.~\cite{gehrke2004bounded}), the canonical extension of an m-algebra (resp.~c-algebra) as above is a  perfect m-algebra (resp.~c-algebra).

\paragraph{Frames and models.}
A {\em neighbourhood frame}, abbreviated as {\em n-frame}  (resp.~{\em conditional frame}, abbreviated as {\em c-frame}) is a pair $\mathbb{F}=(W,\nu)$ (resp.~$\mathbb{F}=(W,f)$) s.t.~$W$ is a non-empty set and $\nu:W\to \mathcal{P}(\mathcal{P}(W))$ is a {\em neighbourhood function} ($f: W\times\mathcal{P}(W)\to \mathcal{P}(W)$ is a {\em selection function}). 
In the remainder of the paper, even if it is not explicitly indicated, we will assume that n-frames  are {\em monotone}, i.e.~s.t.~for every $w\in W$, if $X\in \nu(w)$ and $X\subseteq Y$, then $Y\in \nu(w)$.   For any n-frame  (resp.~c-frame) $\mathbb{F}$, the {\em complex algebra} of $\mathbb{F}$ is $\mathbb{F}^\ast: = (\mathcal{P}(W), \abla^{\mathbb{F}^\ast})$ (resp.~$\mathbb{F}^\ast: = (\mathcal{P}(W),  >^{\mathbb{F}^\ast})$) s.t.~for all $X, Y\in \mathcal{P}(W)$,
\begin{center}
$\abla^{\mathbb{F}^\ast} X: = \{w\mid X\in \nu(w)  \} \quad\quad\quad \quad X >^{\mathbb{F}^\ast} Y: =  \{w\mid  f(w, X)\subseteq Y\}. $\end{center}
\begin{prop}
If $\mathbb{F}$ is an n-frame (resp.~a c-frame), then $\mathbb{F}^\ast$ is a perfect m-algebra (resp.~c-algebra).
\end{prop}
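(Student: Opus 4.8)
The plan is to verify the two defining conditions of a perfect m-algebra (resp.\ c-algebra) directly from the definitions of $\mathbb{F}^\ast$ together with the standing assumptions on n-frames (resp.\ c-frames). Since the carrier of $\mathbb{F}^\ast$ is in both cases the powerset algebra $\mathcal{P}(W)$, the first condition---that the underlying Boolean algebra be complete and atomic---is immediate, as $\mathcal{P}(W)$ is a complete atomic Boolean algebra for any set $W$, its atoms being the singletons. Thus the whole content of the statement lies in checking that the additional operation ($\abla^{\mathbb{F}^\ast}$ or $>^{\mathbb{F}^\ast}$) has the required monotonicity/preservation behaviour.

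For the m-algebra case, I would observe that perfectness imposes no requirement on $\abla$ beyond completeness and atomicity of the carrier, so it suffices to check that $(\mathcal{P}(W),\abla^{\mathbb{F}^\ast})$ is an m-algebra at all, i.e.\ that $\abla^{\mathbb{F}^\ast}$ is monotone. Unwinding the definition $\abla^{\mathbb{F}^\ast} X = \{w\mid X\in\nu(w)\}$, monotonicity amounts to the implication: if $X\subseteq Y$ and $w\in\abla^{\mathbb{F}^\ast}X$, then $w\in\abla^{\mathbb{F}^\ast}Y$. This is precisely the standing monotonicity assumption on n-frames, namely that $X\in\nu(w)$ and $X\subseteq Y$ imply $Y\in\nu(w)$, so the inclusion $\abla^{\mathbb{F}^\ast}X\subseteq\abla^{\mathbb{F}^\ast}Y$ follows pointwise.

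For the c-algebra case, the task is to show that $>^{\mathbb{F}^\ast}$ is completely meet-preserving in its second coordinate, which simultaneously makes $(\mathcal{P}(W),>^{\mathbb{F}^\ast})$ a c-algebra (finite meet-preservation) and witnesses perfectness (complete meet-preservation). Fixing $X\in\mathcal{P}(W)$ and an arbitrary family $\{Y_i\}_{i\in I}$, I would compute directly from $X>^{\mathbb{F}^\ast}Y = \{w\mid f(w,X)\subseteq Y\}$ that $w\in X>^{\mathbb{F}^\ast}\bigcap_i Y_i$ iff $f(w,X)\subseteq\bigcap_i Y_i$ iff $f(w,X)\subseteq Y_i$ for every $i$ iff $w\in\bigcap_i(X>^{\mathbb{F}^\ast}Y_i)$, yielding the desired identity $X>^{\mathbb{F}^\ast}\bigcap_i Y_i=\bigcap_i(X>^{\mathbb{F}^\ast}Y_i)$; the empty-family case is covered as well, both sides equalling $W$ since $f(w,X)\subseteq W$ always holds. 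I expect no genuine obstacle here: the argument is a routine unwinding of definitions, and the only point requiring attention is to recall that the monotonicity of n-frames---although ``not explicitly indicated'' in the later notation---is a standing assumption and is exactly what the m-algebra case needs, while the state-based definitions of both operations are engineered so as to validate the relevant preservation properties automatically, with no appeal to canonical extensions.
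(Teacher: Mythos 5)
Your proof is correct and follows essentially the same route as the paper's: the m-algebra case reduces to the monotonicity of $\abla^{\mathbb{F}^\ast}$ via the standing upward-closedness of $\nu(w)$, and the c-algebra case is the direct computation that $>^{\mathbb{F}^\ast}$ preserves arbitrary meets (including the empty meet, i.e.\ the top element) in its second coordinate. No differences worth noting.
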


\begin{proof}
Let $\mathbb{F}=(W,\nu)$ be an n-frame. Recall that, by definition, $\nu(w)$ is an upward-closed collection of subsets of $W$. To show that $\mathbb{F}^\ast$ is a perfect m-algebra, it is enough to show that $\abla^{\mathbb{F}^\ast}$ is monotone. Let  $w \in W$ and $X \subseteq Y\subseteq W$.  Since $\nu(w)$ is upward-closed,  $X \in \nu(w)$ implies that $Y \in \nu(w)$. Hence, $\abla^{\mathbb{F}^\ast} X= \{w\mid X\in \nu(w)  \}\subseteq \{w \mid Y \in \nu(w)\} = \abla^{\mathbb{F}^\ast} Y$.

Let $\mathbb{F}=(W,f)$ be a c-frame.  To show that $\mathbb{F}^\ast$ is a perfect c-algebra, it is enough to show that $>^{\mathbb{F}^\ast}$ is completely meet-preserving in its second coordinate. For any $X\subseteq W$,
\[X >^{\mathbb{F}^\ast} \top^{\mathbb{F}^\ast} = X >^{\mathbb{F}^\ast} W = \{w\mid  f(w, X)\subseteq W\} = W = \top^{\mathbb{F}^\ast},\]
and for any $\mathcal{X}\subseteq \mathcal{P}(W)$,
\begin{align*}
X >^{\mathbb{F}^\ast} \bigcap \mathcal{X}& =  \{w\in W\mid  f(w, X)\subseteq \bigcap\mathcal{X}\}\\
 & =\{w\mid  f(w, X)\subseteq Y\}\cap\{w\in W\mid  f(w, X)\subseteq Y \text{ for any } Y\in \mathcal{X}\}\\
 &= \bigcap\{(X >^{\mathbb{F}^\ast} Y)\mid Y\in \mathcal{X}\}.
 \end{align*}
\end{proof}

{\em Models} are pairs $\mathbb{M} = (\mathbb{F}, V)$ such that $\mathbb{F}$ is a frame and  $V:\mathcal{L} \to \mathbb{F}^\ast$ is a homomorphism of the appropriate type. Hence, the truth of formulas at states in models is defined as $\mb{M},w \Vdash \varphi$ iff $w\in V(\varphi)$, and unravelling this stipulation   for $\abla$- and $>$-formulas, we get:
\[
\mb{M},w \Vdash \abla \varphi \quad \text{iff}\quad  V(\varphi)\in \nu(w) \quad\quad\quad\quad \mb{M},w \Vdash \varphi> \psi \quad \text{iff}\quad  f(w, V(\varphi))\subseteq V(\psi).
\]


Local validity (notation: $\mb{F},w \Vdash \varphi$) is defined as local satisfaction for every valuation $V$. Global satisfaction (notation: $\mathbb{M}\Vdash\varphi$) and frame validity (notation: $\mathbb{F}\Vdash\varphi$) are defined in the usual way as local satisfaction/validity at every state. Thus, by definition, $\mathbb{F}\Vdash\varphi$ iff $\mathbb{F}^\ast\models \varphi$, from which the soundness of $\mathbf{L}_{\abla}$ (resp.~$\mathbf{L}_{>}$) w.r.t.~the corresponding class of frames  immediately follows from the algebraic soundness. Completeness  follows from algebraic completeness, by observing that (a) the canonical extension of any algebra refuting $\varphi$ will also refute $\varphi$;  (b) canonical extensions are perfect algebras; (c) perfect m-algebras (resp. c-algebras) can be associated with n-frames (resp. c-frames) as follows: for any $\mathbb{A} = (\mathcal{P}(W), \abla^{\mathbb{A}})$ (resp.~$\mathbb{A} = (\mathcal{P}(W), >^{\mathbb{A}})$) let $\mathbb{A}_\ast:=(W,\nu_{\abla^{\mathbb{A}}})$ (resp.~$\mathbb{A}_\ast:=(W,f_{>^{\mathbb{A}}})$) s.t.~for all $w\in W$ and $X\subseteq W$, 
\[\nu_{\abla^{\mathbb{A}}}(w): = \{X\subseteq W\mid w\in \abla^{\mathbb{A}} X\}\quad\quad\quad\quad f_{>^{\mathbb{A}}}(w, X): = \bigcap\{Y\subseteq W\mid w\in X>^{\mathbb{A}} Y\}.
\]
That $\mathbb{A}_\ast$ is a monotone n-frame can be proved as follows: if $X\in \nu_{\abla}(w)$ and $X\subseteq Y$, then the monotonicity of $\abla^{\mathbb{A}}$ implies that $\abla^{\mathbb{A}} X\subseteq \abla^{\mathbb{A}} Y$ and hence $Y\in \nu_{\abla^{\mathbb{A}}}(w)$, as required.

Let $\varphi\in \mathcal{L}_{\abla}$ (resp.~$\varphi\in \mathcal{L}_{>}$). It can be shown by a straightforward induction on $\varphi$ that $w \in V(\varphi)$ iff $(\mathbb{A}_\ast, V),w \Vdash \varphi$ for any perfect algebra $\mathbb{A}$ and assignment $V$. Then, $\mathbb{A}\models\varphi$ iff $\mathbb{A}_\ast\Vdash \varphi$. This completes the argument deriving the frame completeness of $\mathbf{L}_{\abla}$ (resp.~$\mathbf{L}_{>}$) from its algebraic completeness.

\begin{prop} 
\label{prop:dduality single type} If $\mathbb{A}$ is a perfect m-algebra (resp.~c-algebra)   and $\mathbb{F}$ is an  n-frame  (resp.~c-frame), then
 $(\mathbb{F}^\ast)_\ast\cong \mathbb{F}$ and  $(\mathbb{A}_\ast)^\ast\cong \mathbb{A}$.
\end{prop}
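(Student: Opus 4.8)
The plan is to exploit the fact that for all four composites the underlying set (resp.\ Boolean algebra) is returned unchanged: a perfect m-algebra (resp.\ c-algebra) has carrier $\mathcal{P}(W)$, an n-frame (resp.\ c-frame) has carrier $W$, and both $(\cdot)^\ast$ and $(\cdot)_\ast$ leave these carriers fixed. Hence in each case the candidate isomorphism is the identity map, and the only thing to verify is that the operation (resp.\ the neighbourhood/selection function) of the composite structure coincides with that of the original. So I would reduce both claimed isomorphisms, in both the monotone and the conditional signature, to four pointwise identities of maps.

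For the monotone signature both identities are immediate unravellings of the definitions. Writing $\nu'$ for the neighbourhood function of $(\mathbb{F}^\ast)_\ast$, we have $\nu'(w)=\{X\mid w\in\abla^{\mathbb{F}^\ast}X\}=\{X\mid X\in\nu(w)\}=\nu(w)$; and writing $\abla'$ for the operation of $(\mathbb{A}_\ast)^\ast$, we have $\abla'X=\{w\mid X\in\nu_{\abla^{\mathbb{A}}}(w)\}=\{w\mid w\in\abla^{\mathbb{A}}X\}=\abla^{\mathbb{A}}X$. No use of monotonicity is needed here. For the conditional signature, the frame direction $(\mathbb{F}^\ast)_\ast\cong\mathbb{F}$ is almost as easy: the selection function $f'$ of $(\mathbb{F}^\ast)_\ast$ satisfies $f'(w,X)=\bigcap\{Y\mid w\in X>^{\mathbb{F}^\ast}Y\}=\bigcap\{Y\mid f(w,X)\subseteq Y\}$, which equals $f(w,X)$ because any subset of $W$ is the intersection of its own supersets.

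The only direction requiring genuine work is $(\mathbb{A}_\ast)^\ast\cong\mathbb{A}$ for c-algebras, and this is where I expect the main obstacle to lie, since it is the sole point at which perfectness is essential. Writing $>'$ for the operation of $(\mathbb{A}_\ast)^\ast$, I must show $X>'Y=X>^{\mathbb{A}}Y$, i.e.\ that for every $w$ one has $f_{>^{\mathbb{A}}}(w,X)\subseteq Y$ iff $w\in X>^{\mathbb{A}}Y$, where $f_{>^{\mathbb{A}}}(w,X)=\bigcap\{Z\mid w\in X>^{\mathbb{A}}Z\}$. The implication from right to left is trivial, as $Y$ then belongs to the family whose intersection defines $f_{>^{\mathbb{A}}}(w,X)$. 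For the converse I would put $Z_w:=f_{>^{\mathbb{A}}}(w,X)$ and invoke the complete meet-preservation of $>^{\mathbb{A}}$ in its second coordinate (which holds precisely because $\mathbb{A}$ is perfect) to compute $X>^{\mathbb{A}}Z_w=\bigcap\{X>^{\mathbb{A}}Z\mid w\in X>^{\mathbb{A}}Z\}$; since $w$ lies in each term of this intersection, $w\in X>^{\mathbb{A}}Z_w$. Finally, from $Z_w\subseteq Y$ and the monotonicity of $>^{\mathbb{A}}$ in its second coordinate (a consequence of meet-preservation) I obtain $w\in X>^{\mathbb{A}}Z_w\subseteq X>^{\mathbb{A}}Y$, completing the equality. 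Once the four pointwise identities are in place, each isomorphism follows since the identity map on the common carrier is trivially a structure-preserving bijection.
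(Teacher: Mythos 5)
Your proposal is correct and follows essentially the same route as the paper: reduce each isomorphism to a pointwise identity of the neighbourhood/selection function or operation, observe that three of the four identities are definitional unravellings, and handle the only nontrivial case --- $(\mathbb{A}_\ast)^\ast\cong\mathbb{A}$ for c-algebras --- by applying complete meet-preservation of $>^{\mathbb{A}}$ in its second coordinate to the intersection defining $f_{>^{\mathbb{A}}}(w,X)$. Your identification of exactly where perfectness is needed matches the paper's argument.
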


\begin{proof}
Let $\mathbb{F}=(W, \nu)$ be an n-frame. By definition, $(\mathbb{F}^\ast)_\ast=(W, \nu_{\nabla^{\mathbb{F}^\ast}})$, where, for every $w\in W$,
\begin{align*}
\nu_{\nabla^{\mathbb{F}^\ast}}(w) &= \{X\subseteq W\mid w \in \nabla^{\mathbb{F}^\ast} X\}\\
&= \{X\subseteq W\mid w \in \{u \mid X \in \nu(u)\}\}\\
&= \{X\subseteq W\mid X\in \nu(w)\}\\
&= \nu(w),
\end{align*}
which shows that  $(\mathbb{F}^\ast)_\ast= \mathbb{F}$, as required.
Let $\mathbb{F}=(W, f)$ be a c-frame. By definition, $(\mathbb{F}^\ast)_\ast=(w, f_{>^{\mathbb{F}^\ast}})$, where, for every $w\in W$ and $X\subseteq W$,
\begin{align*}
f_{>^{\mathbb{F}^\ast}}(w,X) &=\bigcap \{Y\subseteq W \mid w \in \ X>^{\mathbb{F}^\ast}Y \}\\
&=\bigcap \{Y\subseteq W \mid w \in \ \{u\in W \mid f(u,X)\subseteq Y \}\}\\
&=\bigcap \{Y\subseteq W \mid f(w,X)\subseteq Y \} \\
&=f(w,X),
\end{align*}
which shows that  $(\mathbb{F}^\ast)_\ast= \mathbb{F}$, as required.
Let $\mathbb{A}  = (\mathcal{P}(W), \abla^{\mathbb{A}})$ be a perfect m-algebra (up to isomorphism). Then $(\mathbb{A}_\ast)^\ast= (\mathcal{P}(W),\abla^{(\mathbb{A}_\ast)^\ast})$, where for every $X\subseteq W$,
\begin{align*}
\abla^{(\mathbb{A}_\ast)^\ast}X&=\{w \mid X\in \nu_{\abla^{\mathbb{A}}}(w)\}\\
& =\{w \mid  X\in \{Y\subseteq W\mid w \in \abla^\mathbb{A} Y\}  \}\\
& =\{w \mid  w \in \abla^\mathbb{A} X \}\\
&= \abla^\mathbb{A} X,
\end{align*}
which shows that $(\mathbb{A}_\ast)^\ast\cong \mathbb{A}$, as required. 
Let $\mathbb{A} = (\mathcal{P}(W), >^{\mathbb{A}})$ be a perfect c-algebra (up to isomorphism). Then $(\mathbb{A}_\ast)^\ast= (\mathcal{P}(W),>^{(\mathbb{A}_\ast)^\ast})$, where for all $X, Y\subseteq W$,
\begin{align*}
X>^{(\mathbb{A}_\ast)^\ast}Y&=\{w \mid f_{>^\mathbb{A}}(w,X) \subseteq Y\}\\
&=\{w \mid \bigcap\{Z\subseteq W \mid w \in \ X >^\mathbb{A}Z \}\subseteq Y\}\\
&= X>^\mathbb{A}Y.
\end{align*}
Let us show the last equality. If $w \in \ X>^\mathbb{A} Y$, then $Y \in \{Z\subseteq W \mid w \in \ X >^\mathbb{A}Z \}$, and hence $\bigcap \{Z\subseteq W \mid w \in \ X >^\mathbb{A}Z \} \subseteq Y$. Conversely, let $w\in W$ be s.t.~$ \bigcap\{Z\subseteq W \mid w \in \ X >^\mathbb{A}Z \}\subseteq Y$. Since $>^\mathbb{A}$ is completely meet-preserving in the second coordinate, this implies that 
\[w\in \bigcap \{X >^\mathbb{A} Z \mid Z\subseteq W \text{ and } w \in \ X >^\mathbb{A}Z \} = X >^\mathbb{A} \bigcap\{Z\subseteq W \mid w \in  X >^\mathbb{A}Z \} \subseteq X >^\mathbb{A} Y,\] 
as required. This completes the proof that $(\mathbb{A}_\ast)^\ast\cong \mathbb{A}$. \end{proof}

\paragraph{Axiomatic extensions.} A {\em monotone modal logic} (resp.~a {\em conditional logic}) is any  extension of $\mathbf{L}_{\abla}$ (resp.~$\mathbf{L}_{>}$)   with $\mathcal{L}_{\abla}$-axioms (resp.~$\mathcal{L}_{>}$-axioms). Below we collect correspondence results  for axioms that have cropped up in the literature \cite[Theorem 5.1]{hansen2003monotonic} \cite{Olivetti2007ASC}. 

\begin{thm}
\label{theor:correspondence-noAlba}
For every n-frame (resp.~c-frame) $\mathbb{F}$,

{\hspace{-0.7cm}
{\small{
\begin{tabular}{@{}rl@{}c@{}l}
N\; & $\mathbb{F}\Vdash\abla \top\quad$ &\!iff\!&   $\quad \mathbb{F}\models\forall w[W \in \nu (w)]$\\
P\; & $\mathbb{F}\Vdash\neg \abla \bot\quad$ & iff &  $\quad \mathbb{F}\models\forall w[\varnothing \not \in \nu(w)]$\\
C\; & $\mathbb{F}\Vdash \abla p \land \abla q \to \abla(p \land q)\quad$ & iff &  $\quad \mathbb{F}\models\forall w \forall X \forall Y  [(X \in \nu(w)\ \&\ Y \in \nu (w) )\Rightarrow X \cap Y \in \nu(w)]$\\
T\; & $\mathbb{F}\Vdash\abla p \to p \quad$ & iff &   $\quad \mathbb{F}\models\forall w  \forall X[X \in \nu(w) \Rightarrow w \in X]$\\
4\; & $\mathbb{F}\Vdash\abla \abla  p \to \abla p\quad$ & iff &   $\quad \mathbb{F}\models \forall w   \forall Y X[(X \in \nu (w)\ \&\ \forall x( x\in X\Rightarrow Y \in \nu(x))) \Rightarrow Y \in \nu(w)]$\\
4'\; & $\mathbb{F}\Vdash\abla p \to \abla \abla p\quad$ & iff &  $\quad \mathbb{F}\models\forall w \forall X[X \in \nu(w) \Rightarrow \{y \mid X \in \nu(y)\} \in \nu(w)]$\\
5\; & $\mathbb{F}\Vdash\neg \abla \neg p \to \abla \neg \abla \neg p \quad$ & iff &  $\quad \mathbb{F}\models \forall w \forall X [X \notin \nu(w) \Rightarrow  \{y \mid X \in \nu(y)\}^c \in \nu(w)]$\\
B\; & $\mathbb{F}\Vdash p \to \abla \neg \abla \neg p \quad$ & iff &   $\quad \mathbb{F}\models \forall w\forall X[w \in X \Rightarrow \{y \mid  X^c \in \nu (y)\}^c \in \nu(w)]$\\
D\; & $\mathbb{F}\Vdash \abla p \to \neg \abla \neg p\quad$ & iff &   $\quad \mathbb{F}\models \forall w \forall X[X\in \nu(w)\Rightarrow  X^c \not \in \nu(w)]$\\
CS\; &  $\mathbb{F}\Vdash (p\wedge q) \to (p > q)\quad$ & iff &  $\quad \mathbb{F}\models\forall x\forall Z[f(x,Z)\subseteq \{x\}]$\\
CEM\;&  $\mathbb{F}\Vdash (p > q) \vee (p> \neg q)\quad$ & iff &  $\quad \mathbb{F}\models\forall X \forall y[|f(y,X)|\leq 1]$\\

ID\; & $\mathbb{F}\Vdash p >  p \quad$ & iff &  $\quad\mathbb{F}\models\forall x\forall Z[f(x,Z)\subseteq Z].$\\

CN \; & $\mathbb{F}\Vdash (p > q) \lor (q > p ) \quad$ & iff &  $\quad\mathbb{F}\models\forall z  \forall x \forall y \forall X \forall Y  [  x \not \in f(z,X) \text{~or~} y \not \in f(z,Y)].$\\

T \; & $\mathbb{F}\Vdash(\bot >\neg p)\to p  \quad$ & iff &  $\quad\mathbb{F}\models\forall z \exists x[ x \in f(z,\varnothing) ].$\\

\end{tabular}
}}
}
\end{thm}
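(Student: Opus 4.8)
The plan is to prove each row by the standard semantic (minimal-valuation) method, exploiting the bridge already recorded in the preliminaries, namely that $\mathbb{F}\Vdash\varphi$ iff $\mathbb{F}^\ast\models\varphi$, together with the explicit descriptions of $\abla^{\mathbb{F}^\ast}$ and $>^{\mathbb{F}^\ast}$ in terms of $\nu$ and $f$. Unfolding the satisfaction clauses $\mathbb{M},w\Vdash\abla\varphi$ iff $V(\varphi)\in\nu(w)$ and $\mathbb{M},w\Vdash\varphi> \psi$ iff $f(w,V(\varphi))\subseteq V(\psi)$, each displayed frame-validity statement becomes the assertion that a condition universally quantified over all valuations $V$ and all worlds $w$ is equivalent to the displayed first-order condition on $\mathbb{F}$. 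Since every row has this same shape, I would prove each equivalence by two implications and present a few representative cases in full, indicating that the remaining ones are analogous.

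For the direction ``first-order condition $\Rightarrow$ frame validity'', I would fix an arbitrary model $\mathbb{M}=(\mathbb{F},V)$ and world $w$, unfold the satisfaction of the axiom, replace each modal subformula by its set-theoretic meaning via the clauses above, and then invoke the condition to close the argument. For $C$, for instance, $w\Vdash\abla p\wedge\abla q$ gives $V(p),V(q)\in\nu(w)$, so closure under intersection yields $V(p)\cap V(q)=V(p\wedge q)\in\nu(w)$, i.e.\ $w\Vdash\abla(p\wedge q)$. For the transitivity-type axiom $4$ this direction goes through by instantiating the condition at $X:=V(\abla p)=\{x\mid V(p)\in\nu(x)\}$ and $Y:=V(p)$, which makes both conjuncts of its antecedent hold by construction.

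For the converse ``frame validity $\Rightarrow$ first-order condition'' I would use \emph{minimal valuations}: given the worlds and subsets occurring in the target condition, I would set the propositional variables equal to exactly those sets (or to their complements, or to singletons), evaluate the axiom at the distinguished world, and read the condition off the conclusion. Thus for $C$ one takes $V(p)=X$, $V(q)=Y$; for $T$ one takes $V(p)=X$; and for the negative axioms $5$, $B$, $D$ one sets $V(p)=X$ or $V(p)=X^c$ so that $V(\neg p)$ matches the set argument named in the condition, tracking complementations carefully. This is also where the standing monotonicity assumption on $\nu$ is used: for $4$, with $V(p)=Y$ one has $X\in\nu(w)$ and $X\subseteq V(\abla p)$, and monotonicity is needed to pass to $V(\abla p)\in\nu(w)$, i.e.\ $w\Vdash\abla\abla p$, before validity can be applied to conclude $Y\in\nu(w)$.

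The hard part will be the conditional axioms $CS$, $CEM$, $ID$, $CN$ and the centering-type axiom $(\bot> \neg p)\to p$, because the selection function $f$ is unconstrained and $>^{\mathbb{F}^\ast}$ is \emph{not} normal in its first coordinate, so one cannot shrink the first argument of $f$ by monotonicity; the valuation for the antecedent variable must be set equal to the exact set argument of $f$ appearing in the condition, while the succedent variable is chosen to force a refutation. For $CEM$, validity is converted into the cardinality bound $|f(y,X)|\leq 1$ by a refutation argument: if $u\neq u'$ both lay in $f(y,X)$, the valuation $V(p)=X$, $V(q)=\{u\}$ would falsify both disjuncts of $(p> q)\vee(p> \neg q)$ at $y$. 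The disjunctive/comparability condition for $CN$ is the most delicate, since its first-order form is a disjunction of non-containments whose extraction from validity requires a case analysis over suitably complemented valuations; there I would take the greatest care to match the exact shape of the displayed condition, and I would independently re-verify the easy direction of each conditional row, since that is precisely where a misstated condition would first betray itself.
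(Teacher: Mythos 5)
Your proposal is correct in substance, but it is not the proof the paper gives: you prove each row directly on the single-sorted frame by the classical minimal-valuation method, whereas the paper proves Theorem~\ref{theor:correspondence-noAlba} algorithmically, via the two-sorted detour. Concretely, the paper first translates each axiom into the normal multi-type language $\mathcal{L}_{MT\abla}$ (resp.~$\mathcal{L}_{MT>}$) using the positional translations $\tau_1,\tau_2$ (resp.~$(\cdot)^\tau$), reduces $\mathbb{F}\Vdash\varphi\vdash\psi$ to $\mathbb{F}^\star\Vdash\tau(\varphi\vdash\psi)$ by Proposition~\ref{prop:consequence preserved and reflected}, and then runs ALBA on the translated inequality: first approximation with nominals/conominals, residuation/adjunction steps, Ackermann elimination of the propositional variables, and finally standard translation of the resulting pure quasi-inequality back through $R_\nu, R_{\ni}, T_f$ to a condition on $\nu$ or $f$ (Section~\ref{section:correspondence} and Appendix~\ref{sec:ALBA runs}). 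The hand-picked ``minimal valuations'' you describe --- setting $V(p)$ equal to the set argument of $\nu$ or $f$, or to its complement --- are exactly what the Ackermann rule computes mechanically, and your observation that monotonicity of $\nu$ is needed to pass from $X\subseteq V(\abla p)$ to $V(\abla p)\in\nu(w)$ in axiom 4 reappears in the paper as the ``Monotonicity'' steps of the ALBA runs. What your route buys is elementarity and self-containment: no two-sorted frames, no translation, no algorithm. What the paper's route buys is uniformity (a single procedure covers every row, since all the translated axioms are inductive), a guarantee of success from the general theory rather than a per-axiom valuation hunt, and a proof-theoretic dividend: the $(\star)$-marked quasi-inequalities produced midway through each ALBA run are precisely the interpretations of the analytic structural rules of Section~\ref{sec:calculi}, so the correspondence proof doubles as the soundness proof for the display calculi --- a byproduct your argument does not deliver. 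If you do carry out your version, the places to be most careful are the ones you already flag (the conditional rows, where $f$ cannot be shrunk in its first argument, and the exact complementation bookkeeping in 5, B, D); these are also the rows where matching the displayed first-order condition literally requires the final monotonicity/contraposition massaging that the appendix performs explicitly.
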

In the following section we will introduce a semantic environment which will make it possible to obtain all these correspondence results as instances of a suitable multi-type version of unified correspondence theory \cite{CoGhPa14,CoPa:non-dist}, 
and which will provide the motivation for the introduction of proper display calculi for the logics axiomatised by some of these axioms, namely, those the translation of which is analytic inductive (cf.~Section \ref{sec:embedding}). 

\section{Semantic analysis}
\label{sec: semantic analysis}
\subsection{Two-sorted Kripke frames and their discrete duality}
\label{ssec:2sorted Kripke frame}
Structures similar to those below are considered implicitly in \cite{hansen2003monotonic}, and explicitly in \cite{frittella2017dual}.
\begin{defn}\label{def:2sorted Kripke frame}
A {\em two-sorted n-frame} (resp.~{\em c-frame}) is a structure $\mathbb{K}: = (X, Y, R_\ni, R_{\not\ni},$ $R_\nu, R_{\nu^c})$ (resp.~$\mathbb{K}: = (X, Y, R_\ni, R_{\not\ni}, T_f)$) such that $X$ and $Y$ are nonempty sets, $R_\ni, R_{\not\ni}\subseteq Y\times X$ and $R_\nu, R_{\nu^c}\subseteq X\times Y$ and $T_f\subseteq X\times Y\times X$. 
Such an n-frame is {\em supported} if for every $D\subseteq X$,
\begin{equation}
\label{eq:supported n-frames}
R_{\nu}^{-1}[(R_{\ni}^{-1}[D^c])^c] = (R_{\nu^c}^{-1}[(R_{\not\ni}^{-1}[D])^c])^c.
\end{equation}
For any two-sorted n-frame  (resp.~c-frame) $\mathbb{K}$, the {\em complex algebra} of $\mathbb{K}$ is 

\begin{center}
$\mathbb{K}^+: = (\mathcal{P}(X), \mathcal{P}(Y), \dboxni^{\mathbb{K}^+}, \ddianni^{\mathbb{K}^+}, \xdianu^{\mathbb{K}^+}, \xboxnuc^{\mathbb{K}^+})$ (resp.~$\mathbb{K}^+: = (\mathcal{P}(X), \mathcal{P}(Y), \dboxni^{\mathbb{K}^+}, \drhdnni^{\mathbb{K}^+}, \mtra^{\mathbb{K}^+})$), s.t.
\end{center}

{\small{
\begin{center}
\begin{tabular}{r  cr c r}
$\xdianu^{\mathbb{K}^+}: \mathcal{P}(Y)\to \mathcal{P}(X)$ &$\quad$ & $\dboxni^{\mathbb{K}^+}: \mathcal{P}(X)\to \mathcal{P}(Y)$ &$\quad$ &$\ddianni^{\mathbb{K}^+}: \mathcal{P}(X)\to \mathcal{P}(Y)$\\ 
$U\mapsto R^{-1}_\nu[U]$ && $D\mapsto (R_\ni^{-1}[D^c])^c$ && $D\mapsto R_{\not\ni}^{-1}[D]$ \\
&&&\\
$\xboxnuc^{\mathbb{K}^+}: \mathcal{P}(Y)\to \mathcal{P}(X)$ &&$\drhdnni^{\mathbb{K}^+}: \mathcal{P}(X)\to \mathcal{P}(Y)$&$\quad$ & $\mtra^{\mathbb{K}^+}: \mathcal{P}(Y)\times \mathcal{P}(X)\to \mathcal{P}(X)$ \\
 $U\mapsto (R^{-1}_{\nu^c}[U^c])^c$ &&$D\mapsto (R_{\not\ni}^{-1}[D])^c$&& $(U, D)\mapsto (T_f^{(0)}[U, D^c])^c$\\
\end{tabular}
\end{center}
}}
\end{defn}
The adjoints and residuals of the maps above (cf.~Section \ref{ssec:notation}) are defined as follows:
{\small{
\begin{center}
\begin{tabular}{r  cr c rcr}
$\dboxun^{\mathbb{K}^+}: \mathcal{P}(X)\to \mathcal{P}(Y)$ &$\quad$ & $\xdiain^{\mathbb{K}^+}: \mathcal{P}(Y)\to \mathcal{P}(X)$ &$\quad$ &$\xboxnin^{\mathbb{K}^+}: \mathcal{P}(Y)\to \mathcal{P}(X)$ \\
$D\mapsto (R_\nu[D^c])^c$ && $U\mapsto R_\ni[U]$ &&$U\mapsto (R_{\not\ni}[U^c])^c$ \\
&&\\

$\ddiaunc^{\mathbb{K}^+}: \mathcal{P}(X)\to \mathcal{P}(Y)$ && $\xrhdnin^{\mathbb{K}^+}: \mathcal{P}(Y)\to \mathcal{P}(X)$ &$\quad$ & $\mtbra^{\mathbb{K}^+}: \mathcal{P}(X)\times \mathcal{P}(X)\to \mathcal{P}(Y)$ \\ 

 $D\mapsto R_{\nu^c}[D]$   && $U\mapsto (R_{\not\ni}[U])^c$&& $(C, D)\mapsto (T_f^{(1)}[C, D^c])^c$\\

&& $\mtAND^{\mathbb{K}^+}: \mathcal{P}(Y)\times \mathcal{P}(X)\to \mathcal{P}(X)$ \\ 
&& $(U, D)\mapsto T_f^{(2)}[U, D]$  \\
\end{tabular}
\end{center}
}}
Complex algebras of two-sorted frames can be recognized as perfect heterogeneous algebras (cf.~\cite{birkhoff1970heterogeneous}) of the following kind:
\begin{defn}
\label{def:heterogeneous algebras}
A {\em heterogeneous m-algebra} (resp.~{\em c-algebra}) is a structure \[\mathbb{H}: = (\mathbb{A}, \mathbb{B}, \dboxni^{\mathbb{H}},  \ddianni^{\mathbb{H}}, \xdianu^{\mathbb{H}}, \xboxnuc^{\mathbb{H}}) \quad\quad\text{(resp.~}\mathbb{H}: = (\mathbb{A}, \mathbb{B}, \dboxni^{\mathbb{H}}, \drhdnni^{\mathbb{H}}, \mtra^{\mathbb{H}})\text{)}\]
such that $\mathbb{A}$ and $\mathbb{B}$ are Boolean algebras, $\xdianu^{\mathbb{H}}, \xboxnuc: \mathbb{B}\to \mathbb{A}$  are finitely join-preserving and finitely meet-preserving respectively,   $\dboxni^{\mathbb{H}}, \drhdnni^{\mathbb{H}}, \ddianni^{\mathbb{H}}: \mathbb{A}\to \mathbb{B}$ are finitely meet-preserving, finitely join-reversing, and finitely join-preserving respectively, and $\mtra^{\mathbb{H}}:\mathbb{B}\times \mathbb{A}\to \mathbb{A}$ is finitely join-reversing in its first coordinate and finitely meet-preserving in its second coordinate. 
Such an $\mathbb{H}$ is {\em complete} if $\mathbb{A}$ and $\mathbb{B}$ are complete Boolean algebras and the operations above enjoy the complete versions of the finite preservation properties indicated above, and is {\em perfect} if it is complete and $\mathbb{A}$ and $\mathbb{B}$ are perfect.  
The {\em canonical extension} of a heterogeneous m-algebra (resp.~c-algebra)  $\mathbb{H}$ is \mbox{$\mathbb{H}^\delta: = (\mathbb{A}^\delta, \mathbb{B}^\delta, \dboxni^{\mathbb{H}^\delta}, \ddianni^{\mathbb{H}^\delta}, \xdianu^{\mathbb{H}^\delta}, \xboxnuc^{\mathbb{H}^\delta})$} (resp.~$\mathbb{H}^\delta: = (\mathbb{A}^\delta, \mathbb{B}^\delta, \dboxni^{\mathbb{H}^\delta}, \drhdnni^{\mathbb{H}^\delta}, \mtra^{\mathbb{H}^\delta})$), where $\mathbb{A}^\delta$ and $\mathbb{B}^\delta$ are the canonical extensions of $\mathbb{A}$ and $\mathbb{B}$ respectively \cite{jonsson1951boolean}, moreover $\dboxni^{\mathbb{H}^\delta}$, $ \drhdnni^{\mathbb{H}^\delta}$, $\xboxnuc^{\mathbb{H}^\delta}, \mtra^{\mathbb{H}^\delta}$  are the $\pi$-extensions of $\dboxni^{\mathbb{H}}, \drhdnni^{\mathbb{H}}, \xboxnuc^{\mathbb{H}}, \mtra^{\mathbb{H}}$  respectively, and $\xdianu^{\mathbb{H}^\delta},$ $\ddianni^{\mathbb{H}^\delta}$ are the $\sigma$-extensions of $\xdianu^{\mathbb{H}},\ddianni^{\mathbb{H}}$ respectively.
\end{defn}
\begin{defn}
A heterogeneous m-algebra $\mathbb{H}: = (\mathbb{A}, \mathbb{B}, \dboxni^{\mathbb{H}},  \ddianni^{\mathbb{H}}, \xdianu^{\mathbb{H}}, \xboxnuc^{\mathbb{H}})$ is {\em supported}
if $\xdianu^{\mathbb{H}} \dboxni^{\mathbb{H}}a = \xboxnuc^{\mathbb{H}}\ddianni^{\mathbb{H}} a$ for every $a\in \mathbb{A}$.
\end{defn}
It immediately follows from the definitions that 
\begin{lem}
The complex algebra of a supported two-sorted n-frame is a perfect heterogeneous  supported m-algebra. 
\end{lem}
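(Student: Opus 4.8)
The plan is to establish the two parts of the statement—that $\mathbb{K}^+$ is a \emph{perfect heterogeneous m-algebra} and that it is \emph{supported}—separately, drawing in both cases on the general facts about the modal operators $\langle R\rangle$ and $[R]$ recorded in Section~\ref{ssec:notation}.

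For the first part, I would first observe that, being powerset algebras, $\mathcal{P}(X)$ and $\mathcal{P}(Y)$ are perfect (i.e.~complete and atomic) Boolean algebras, so only the preservation properties of Definition~\ref{def:heterogeneous algebras} remain to be checked. The key point is that each of the four operations of $\mathbb{K}^+$ is, after unfolding its definition, exactly one of the canonical modal operators associated with a binary relation: one reads off that $\xdianu^{\mathbb{K}^+} = \langle R_\nu\rangle$, $\dboxni^{\mathbb{K}^+} = [R_\ni]$, $\ddianni^{\mathbb{K}^+} = \langle R_{\not\ni}\rangle$, and $\xboxnuc^{\mathbb{K}^+} = [R_{\nu^c}]$. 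Hence each operation inherits verbatim the complete preservation property proved in Section~\ref{ssec:notation}: $\xdianu^{\mathbb{K}^+}$ and $\ddianni^{\mathbb{K}^+}$ (being of $\langle R\rangle$-shape) are completely join-preserving, while $\dboxni^{\mathbb{K}^+}$ and $\xboxnuc^{\mathbb{K}^+}$ (being of $[R]$-shape) are completely meet-preserving. Comparing this with the signature prescribed by Definition~\ref{def:heterogeneous algebras}—$\xdianu$ join-preserving and $\xboxnuc$ meet-preserving from $\mathbb{B}$ to $\mathbb{A}$, and $\dboxni$ meet-preserving and $\ddianni$ join-preserving from $\mathbb{A}$ to $\mathbb{B}$—shows that all four requirements hold in their complete form, so $\mathbb{K}^+$ is a perfect heterogeneous m-algebra.

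For the second part, I would simply unfold the supportedness condition for $\mathbb{K}^+$. For every $D\subseteq X$ one computes $\xdianu^{\mathbb{K}^+}\dboxni^{\mathbb{K}^+}D = R_\nu^{-1}[(R_\ni^{-1}[D^c])^c]$ and $\xboxnuc^{\mathbb{K}^+}\ddianni^{\mathbb{K}^+}D = (R_{\nu^c}^{-1}[(R_{\not\ni}^{-1}[D])^c])^c$, directly from the definitions of the four operators. Therefore the required identity $\xdianu^{\mathbb{K}^+}\dboxni^{\mathbb{K}^+}D = \xboxnuc^{\mathbb{K}^+}\ddianni^{\mathbb{K}^+}D$ is literally equation~\eqref{eq:supported n-frames}, which holds for all $D\subseteq X$ precisely because $\mathbb{K}$ is assumed to be supported. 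This yields supportedness of $\mathbb{K}^+$ and completes the proof.

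I do not expect a genuine obstacle here: the content of the lemma has been built into the definitions of the complex-algebra operators and into the preservation facts of Section~\ref{ssec:notation}, so both parts reduce to bookkeeping. The only step needing care is the correct matching of each operator with its canonical $\langle R\rangle$- or $[R]$-form, keeping track of the relation involved and of the complementations concealed inside the box-type operators; once this is done, the supportedness claim is an immediate re-reading of~\eqref{eq:supported n-frames}.
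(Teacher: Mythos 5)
Your proposal is correct and follows essentially the same route as the paper: the paper's proof simply declares $\mathbb{K}^+$ ``clearly perfect'' (where you spell out the identification of each operation with a canonical $\langle R\rangle$- or $[R]$-type operator from the preliminaries) and then verifies supportedness by exactly the computation you give, reading off $\xdianu^{\mathbb{K}^+}\dboxni^{\mathbb{K}^+}D$ and $\xboxnuc^{\mathbb{K}^+}\ddianni^{\mathbb{K}^+}D$ and invoking equation~\eqref{eq:supported n-frames}.
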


\begin{proof}
Let $\mathbb{K} = (X, Y, R_\ni, R_{\not\ni},$ $R_\nu, R_{\nu^c})$ be a supported two-sorted n-frame. Then its complex algebra is  $\mathbb{K}^+ = (\mathcal{P}(X), \mathcal{P}(Y), \dboxni^{\mathbb{K}^+}, \ddianni^{\mathbb{K}^+}, \xdianu^{\mathbb{K}^+}, \xboxnuc^{\mathbb{K}^+})$, which is clearly perfect. Since $\mathbb{K}$ is also supported,  $R_{\nu}^{-1}[(R_{\ni}^{-1}[D^c])^c] = (R_{\nu^c}^{-1}[(R_{\not\ni}^{-1}[D])^c])^c$ for any $D\subseteq \mathbb{K}$.  Hence, 
\[ \xdianu^{\mathbb{K}^+} \dboxni^{\mathbb{K}^+}D =  R_{\nu}^{-1}[(R_{\ni}^{-1}[D^c])^c] 
= (R_{\nu^c}^{-1}[(R_{\not\ni}^{-1}[D])^c])^c
= \xboxnuc^{\mathbb{K}^+} \ddianni^{\mathbb{K}^+}  D.\]
\end{proof}

\begin{defn}
If  $\mathbb{H} = (\mathcal{P}(X), \mathcal{P}(Y), \dboxni^{\mathbb{H}},  \ddianni^{\mathbb{H}}, \xdianu^{\mathbb{H}}, \xboxnuc^{\mathbb{H}})$ is a  perfect heterogeneous m-algebra
 (resp.~$\mathbb{H} = (\mathcal{P}(X), \mathcal{P}(Y), \dboxni^{\mathbb{H}}, \drhdnni^{\mathbb{H}}, \mtra^{\mathbb{H}})$ is a perfect heterogeneous~c-algebra), its associated two-sorted n-frame (resp.~c-frame) is
 \[\mathbb{H}_+: = (X, Y, R_\ni, R_{\not\ni}, R_\nu, R_{\nu^c})\quad\quad \text{(resp.~}\mathbb{H}_+: = (X, Y, R_\ni, R_{\not\ni}, T_f) \text{), s.t.}\]
 {\small{
\begin{itemize}
\item  $R_{\ni}\subseteq Y\times X$ is defined by $yR_\ni x$ iff $y\notin \dboxni^{\mathbb{H}}x^c$,
\item  $R_{\not\ni}\subseteq Y\times X$ is defined by $xR_{\not\ni} y$ iff $y\in \ddianni^{\mathbb{H}}\{x\}$  (resp.~$y\notin \drhdnni^{\mathbb{H}}\{x\}$),
\item $R_\nu\subseteq X\times Y$ is defined by $xR_\nu y$ iff $x\in \xdianu^{\mathbb{H}}\{y\}$, 
\item $R_{\nu^c}\subseteq X\times Y$ is defined by $xR_{\nu^c} y$ iff $x\notin \xboxnuc^{\mathbb{H}}y^c$, 
\item $T_f\subseteq X\times Y\times X$ is defined by $(x', y, x)\in T_f$ iff $x'\notin  \{y\}\mtra^{\mathbb{H}} x^c$. 
\end{itemize} 
}}
\end{defn}
\begin{lem}
If  $\mathbb{H}$ is a  perfect supported  heterogeneous m-algebra, then $\mathbb{H}_+$ is a supported two-sorted n-frame.
\end{lem}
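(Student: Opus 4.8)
The plan is to reduce the supportedness of the frame $\mathbb{H}_+$ to the supportedness of the algebra $\mathbb{H}$, by observing that the defining clauses of the relations $R_\ni, R_{\not\ni}, R_\nu, R_{\nu^c}$ are exactly what is needed for the complex-algebra operators of $\mathbb{H}_+$ to recover the original operators of $\mathbb{H}$. Concretely, fix $D\subseteq X$; the condition \eqref{eq:supported n-frames} to be established reads
\[
R_{\nu}^{-1}[(R_{\ni}^{-1}[D^c])^c] = (R_{\nu^c}^{-1}[(R_{\not\ni}^{-1}[D])^c])^c,
\]
and I would show that the left-hand side equals $\xdianu^{\mathbb{H}}\dboxni^{\mathbb{H}} D$ while the right-hand side equals $\xboxnuc^{\mathbb{H}}\ddianni^{\mathbb{H}} D$. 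The equation then follows at once from the supportedness hypothesis $\xdianu^{\mathbb{H}}\dboxni^{\mathbb{H}} a = \xboxnuc^{\mathbb{H}}\ddianni^{\mathbb{H}} a$ specialised to $a = D$.

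First I would record the four pointwise identities that translate the relations back into operators, each time invoking the relevant complete preservation property available in a perfect heterogeneous algebra and writing the argument as a (possibly complemented) join or meet of singletons. Since $\xdianu^{\mathbb{H}}$ is completely join-preserving and $\mathbb{A} = \mathcal{P}(X)$, the clause $xR_\nu y$ iff $x\in\xdianu^{\mathbb{H}}\{y\}$ gives $R_\nu^{-1}[U] = \bigcup_{y\in U}\xdianu^{\mathbb{H}}\{y\} = \xdianu^{\mathbb{H}} U$ for every $U\subseteq Y$. Dually, using that $\dboxni^{\mathbb{H}}$ is completely meet-preserving together with $D = \bigcap_{x\notin D} x^c$, the clause $yR_\ni x$ iff $y\notin\dboxni^{\mathbb{H}} x^c$ gives $(R_\ni^{-1}[D^c])^c = \bigcap_{x\notin D}\dboxni^{\mathbb{H}} x^c = \dboxni^{\mathbb{H}} D$. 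The same two patterns, with $\ddianni^{\mathbb{H}}$ completely join-preserving and $\xboxnuc^{\mathbb{H}}$ completely meet-preserving, yield $R_{\not\ni}^{-1}[D] = \ddianni^{\mathbb{H}} D$ and $(R_{\nu^c}^{-1}[U^c])^c = \xboxnuc^{\mathbb{H}} U$ for all $D\subseteq X$ and $U\subseteq Y$.

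Combining these, the left-hand side rewrites as $R_\nu^{-1}[\dboxni^{\mathbb{H}} D] = \xdianu^{\mathbb{H}}\dboxni^{\mathbb{H}} D$ and the right-hand side as $\xboxnuc^{\mathbb{H}}(R_{\not\ni}^{-1}[D]) = \xboxnuc^{\mathbb{H}}\ddianni^{\mathbb{H}} D$, so supportedness of $\mathbb{H}$ closes the argument. In effect this is just the observation that the complex algebra $(\mathbb{H}_+)^+$ recovers $\mathbb{H}$ — the m-algebra instance of the multi-type discrete duality (cf.~Proposition \ref{prop:dduality multi-type}) — specialised to the two composite operators occurring in the support equation; one could alternatively quote that duality directly if it is already available at this point and then merely rewrite \eqref{eq:supported n-frames} through the complex-algebra operators.

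The main obstacle is bookkeeping rather than anything conceptual: one must track the sorts and directions of the relations carefully (each $R$ lives on $Y\times X$ or $X\times Y$, so the converses, domain images, and complements must be applied in the right order), and one must invoke the matching preservation property — join for $\xdianu^{\mathbb{H}}$ and $\ddianni^{\mathbb{H}}$, meet for $\dboxni^{\mathbb{H}}$ and $\xboxnuc^{\mathbb{H}}$ — at the corresponding step. No genuine difficulty is expected beyond keeping these complementations and sorts straight.
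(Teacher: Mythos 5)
Your proposal is correct and follows essentially the same route as the paper: the paper's proof is exactly the chain $R_{\nu}^{-1}[(R_{\ni}^{-1}[D^c])^c] = \xdianu^{\mathbb{H}}\dboxni^{\mathbb{H}}D = \xboxnuc^{\mathbb{H}}\ddianni^{\mathbb{H}}D = (R_{\nu^c}^{-1}[(R_{\not\ni}^{-1}[D])^c])^c$, with the outer equalities left to the discrete duality. You merely spell out those outer identities via the complete preservation properties, which is a harmless (and accurate) elaboration.
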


\begin{proof}
To show that $\mathbb{H}_+$ is supported, for every $D\subseteq X$,
\[R_{\nu}^{-1}[(R_{\ni}^{-1}[D^c])^c] = \xdianu^{\mathbb{H}} \dboxni^{\mathbb{H}}D
= \xboxnuc^{\mathbb{H}} \ddianni^{\mathbb{H}}  D
= (R_{\nu^c}^{-1}[(R_{\not\ni}^{-1}[D])^c])^c.\]
\end{proof}

The duality between perfect BAOs and Kripke frames can be readily extended to the present two-sorted case. 
The following proposition collects these well-known facts, the proofs of which are analogous to those of the single-sorted case, hence  are omitted.  
\begin{prop} \label{prop:dduality multi-type}
For every heterogeneous m-algebra (resp.~c-algebra)  $\mathbb{H}$ and every two-sorted n-frame  (resp.~c-frame) $\mathbb{K}$,
\begin{enumerate}
\item $\mathbb{K}^+$ is a perfect heterogeneous m-algebra (resp.~c-algebra);
\item $(\mathbb{K}^+)_+\cong \mathbb{K}$, and if $\mathbb{H}$ is  perfect, then $(\mathbb{H}_+)^+\cong \mathbb{H}$.
\end{enumerate}
\end{prop}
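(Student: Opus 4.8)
The plan is to mirror the single-sorted argument of Proposition~\ref{prop:dduality single type}, treating each operation of a complex algebra as a relational operator in the sense of Section~\ref{ssec:notation} and reading off its preservation properties (and, for the round-trips, unwinding the defining equations one at a time). Since $(\mathbb{K}^+)_+$ and $\mathbb{H}_+$ are defined solely through the primary operators $\dboxni,\ddianni,\xdianu,\xboxnuc$ (resp.~$\dboxni,\drhdnni,\mtra$), the adjoint and residual maps require no separate treatment.

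For item~1, observe that every map occurring in $\mathbb{K}^+$ is literally one of the relational operators of Section~\ref{ssec:notation}: one has $\xdianu^{\mathbb{K}^+}=\langle R_\nu\rangle$, $\dboxni^{\mathbb{K}^+}=[R_\ni]$, $\ddianni^{\mathbb{K}^+}=\langle R_{\not\ni}\rangle$, $\xboxnuc^{\mathbb{K}^+}=[R_{\nu^c}]$, and in the conditional case $\drhdnni^{\mathbb{K}^+}=[R_{\not\ni}\rangle$ together with $\mtra^{\mathbb{K}^+}=\mtra_{T_f}$. The general facts recorded there then immediately yield that each operator is completely join-preserving, meet-preserving, or join-reversing in exactly the coordinates demanded by Definition~\ref{def:heterogeneous algebras}; as $\mathcal{P}(X)$ and $\mathcal{P}(Y)$ are perfect Boolean algebras, $\mathbb{K}^+$ is a perfect heterogeneous m-algebra (resp.~c-algebra).

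For item~2, the isomorphism $(\mathbb{K}^+)_+\cong\mathbb{K}$ is the easy direction: both constructions leave the carrier sets $X,Y$ unchanged, so it suffices to recover each relation by a one-line unwinding. For instance, in $(\mathbb{K}^+)_+$ we have $xR_\nu y$ iff $x\in\xdianu^{\mathbb{K}^+}\{y\}=R_\nu^{-1}[\{y\}]$, i.e.~iff $xR_\nu y$; likewise $yR_\ni x$ iff $y\notin\dboxni^{\mathbb{K}^+}x^c=(R_\ni^{-1}[x])^c$, i.e.~iff $yR_\ni x$; and $(x',y,x)\in T_f$ is recovered from $x'\notin\{y\}\mtra^{\mathbb{K}^+}x^c$. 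The remaining relations are handled identically. For $(\mathbb{H}_+)^+\cong\mathbb{H}$ with $\mathbb{H}$ perfect, identify $\mathbb{A}=\mathcal{P}(X)$ and $\mathbb{B}=\mathcal{P}(Y)$ with their atom sets $X,Y$ (as built into the definition of $\mathbb{H}_+$), so that the underlying Boolean isomorphism is the identity and it remains only to check that the operators coincide. The two representative computations are: for the join-preserving $\xdianu$, writing $U=\bigcup_{y\in U}\{y\}$ and using complete join-preservation,
\[
\xdianu^{(\mathbb{H}_+)^+}U=R_\nu^{-1}[U]=\bigcup\nolimits_{y\in U}\xdianu^{\mathbb{H}}\{y\}=\xdianu^{\mathbb{H}}U;
\]
and for the meet-preserving $\dboxni$, writing $D=\bigcap_{x\notin D}x^c$ and using complete meet-preservation,
\[
\dboxni^{(\mathbb{H}_+)^+}D=(R_\ni^{-1}[D^c])^c=\bigcap\nolimits_{x\notin D}\dboxni^{\mathbb{H}}x^c=\dboxni^{\mathbb{H}}D.
\]

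The step I expect to be the main source of friction is this last direction for the order-reversing operators and for all operators defined through complements ($\xboxnuc,\drhdnni,\mtra$): the complement bookkeeping must be done with care. The underlying principle, however, is uniform and is the only place where perfectness of $\mathbb{H}$ is used, namely that on a perfect algebra a completely join-preserving map is determined by its values on the atoms and a completely meet-preserving (resp.~completely join-reversing) map by its values on the complements of atoms. This is precisely the information encoded in the relational definition of $\mathbb{H}_+$, so that reconstructing the operator via the complex-algebra construction returns the original map. Checking each of $\xboxnuc,\ddianni,\drhdnni$ and the ternary $\mtra$ by the same atom/co-atom decomposition completes the argument.
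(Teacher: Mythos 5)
Your proposal is correct, and it takes essentially the route the paper intends: the paper omits the proof of Proposition~\ref{prop:dduality multi-type} altogether, declaring it a routine two-sorted extension of the discrete duality between perfect BAOs and Kripke frames, and your argument is exactly that extension (item~1 by reading each operation of $\mathbb{K}^+$ as one of the relational operators of Section~\ref{ssec:notation}, item~2 by unwinding the defining clauses and, for $(\mathbb{H}_+)^+\cong\mathbb{H}$, using that on a perfect algebra the operators are determined by their values on atoms and complements of atoms). The representative computations you give are correct, and the deferred cases for $\xboxnuc$, $\ddianni$, $\drhdnni$ and $\mtra$ go through by the same complement bookkeeping.
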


\subsection{Equivalent representation of  m-algebras  and c-algebras}
\label{Heterogeneous presentation}
Every supported heterogeneous m-algebra (resp.~c-algebra) can be associated with an m-algebra (resp.~a c-algebra) as follows:
\begin{defn}
For every  supported heterogeneous m-algebra  $\mathbb{H} = (\mathbb{A},\mathbb{B}, \dboxni^{\mathbb{H}}, \ddianni^{\mathbb{H}}, \xdianu^{\mathbb{H}},$ \\ $ \xboxnuc^{\mathbb{H}})$ 
 \ (resp.~c-algebra $\mathbb{H} = (\mathbb{A},\mathbb{B}, \dboxni^{\mathbb{H}}, \drhdnni^{\mathbb{H}}, \mtra^{\mathbb{H}})$),  let $\mathbb{H}_\bullet:  = (\mathbb{A}, \abla^{\mathbb{H}_\bullet})$ \ \ (resp.~$\mathbb{H}_\bullet:  = (\mathbb{A},$ \\ $>^{\mathbb{H}_\bullet})$), where for every $a\in\mathbb{A}$ (resp.~$a, b\in \mathbb{A}$),  
 \[\abla^{\mathbb{H}_\bullet} a = \xdianu^{\mathbb{H}}\dboxni^{\mathbb{H}} a = \xboxnuc^{\mathbb{H}}\ddianni^{\mathbb{H}} a\quad\quad \text{ (resp.~}a >^{\mathbb{H}_\bullet}b: = (\dboxni^{\mathbb{H}} a \wedge \drhdnni^{\mathbb{H}} a)\mtra^{\mathbb{H}} b\text{)}.\]
%
%
\end{defn}
It immediately follows from the stipulations above that $\abla^{\mathbb{H}_\bullet}$ is a monotone map (resp. $>^{\mathbb{H}_\bullet}$ is finitely  meet-preserving in its second coordinate), and hence $\mathbb{H}_\bullet$ is an m-algebra (resp.~a c-algebra).
Conversely, every complete m-algebra (resp.~c-algebra) can be associated with a complete supported heterogeneous m-algebra (resp.~a c-algebra) as follows:
\begin{defn}
For every  complete m-algebra  $\mathbb{C} = (\mathbb{A},\abla^{\mathbb{C}})$ (resp.~complete c-algebra  $\mathbb{C} = (\mathbb{A},>^{\mathbb{C}})$),  let $\mathbb{C}^\bullet:  = (\mathbb{A}, \mathcal{P}(\mathbb{A}), \dboxni^{\mathbb{C}^\bullet}, \ddianni^{\mathbb{C}^\bullet}, \xdianu^{\mathbb{C}^\bullet}, \xboxnuc^{\mathbb{C}^\bullet})$ (resp.~$\mathbb{C}^\bullet:  = (\mathbb{A}, \mathcal{P}(\mathbb{A}), \dboxni^{\mathbb{C}^\bullet},$ \mbox{$ \drhdnni^{\mathbb{C}^\bullet},$} $\mtra^{\mathbb{C}^\bullet})$), where for every $a\in\mathbb{A}$ and $B\in \mathcal{P}(\mathbb{A})$,
{\small{
\[\dboxni^{\mathbb{C}^\bullet} a: = \{b\in\mathbb{A}\mid b\leq a\}\quad \quad\xdianu^{\mathbb{C}^\bullet}B: = \bigvee \{\abla^{\mathbb{C}} b\mid b\in B\} \quad\quad  \drhdnni^{\mathbb{C}^\bullet} a: = \{b\in\mathbb{A}\mid a\leq b\}\]
\[\xboxnuc^{\mathbb{C}^\bullet} B: =\bigwedge \{\abla^{\mathbb{C}}b\mid b\notin B\} \quad B \mtra^{\mathbb{C}^\bullet} a: = \bigwedge\{b >^{\mathbb{C}} a\mid b\in B\}\quad  \ddianni^{\mathbb{C}^\bullet} a: = \{b\in\mathbb{A}\mid a\nleq b\}.\]
}}
\end{defn}
\begin{lem} 
\label{lemma:cbullet}
If $\mathbb{C}$  is a  complete m-algebra  (resp.~complete c-algebra), then
  $\mathbb{C}^\bullet$ is a complete supported heterogeneous m-algebra (resp.~c-algebra). 
\end{lem}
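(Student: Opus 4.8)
The plan is to split the verification into two independent parts: first that $\mathbb{C}^\bullet$ satisfies the complete preservation/reversal requirements of Definition~\ref{def:heterogeneous algebras}, and then, in the m-algebra case, that it is supported. Throughout I would use that joins and meets in $\mathcal{P}(\mathbb{A})$ are computed as unions and intersections, and that the completeness of $\mathbb{A}$ guarantees that all the $\bigvee$'s and $\bigwedge$'s appearing in the definitions of $\xdianu^{\mathbb{C}^\bullet}$, $\xboxnuc^{\mathbb{C}^\bullet}$ and $\mtra^{\mathbb{C}^\bullet}$ actually exist.

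For the preservation properties, each operation is handled by a direct computation that commutes the relevant arbitrary join or meet past the defining clause. For instance, since $b\in\dboxni^{\mathbb{C}^\bullet}a$ iff $b\le a$, for any family $\{a_i\}\subseteq\mathbb{A}$ we have $b\le\bigwedge_i a_i$ iff $b\le a_i$ for all $i$, whence $\dboxni^{\mathbb{C}^\bullet}\bigwedge_i a_i=\bigcap_i\dboxni^{\mathbb{C}^\bullet}a_i$, so $\dboxni^{\mathbb{C}^\bullet}$ is completely meet-preserving; the maps $\ddianni^{\mathbb{C}^\bullet}$ and $\drhdnni^{\mathbb{C}^\bullet}$ are treated symmetrically (using that $\bigvee_i a_i\nleq b$ iff $a_i\nleq b$ for some $i$, and that $\bigvee_i a_i\le b$ iff $a_i\le b$ for all $i$, respectively), while $\xdianu^{\mathbb{C}^\bullet}$ and $\xboxnuc^{\mathbb{C}^\bullet}$ follow by distributing $\bigvee$ over unions and $\bigwedge$ over intersections. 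The only step that actually invokes the hypothesis on $\mathbb{C}$ is that, in the c-algebra case, $\mtra^{\mathbb{C}^\bullet}$ is completely meet-preserving in its second coordinate: here $B\mtra^{\mathbb{C}^\bullet}\bigwedge_j a_j=\bigwedge\{b>^{\mathbb{C}}\bigwedge_j a_j\mid b\in B\}$, and since $\mathbb{C}$ is a complete c-algebra $>^{\mathbb{C}}$ is completely meet-preserving in its second coordinate, so $b>^{\mathbb{C}}\bigwedge_j a_j=\bigwedge_j(b>^{\mathbb{C}}a_j)$ and the two outer meets may be swapped to yield $\bigwedge_j(B\mtra^{\mathbb{C}^\bullet}a_j)$; complete join-reversal in the first coordinate is immediate from the definition.

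For supportedness in the m-algebra case, I would unravel both sides of $\xdianu^{\mathbb{C}^\bullet}\dboxni^{\mathbb{C}^\bullet}a=\xboxnuc^{\mathbb{C}^\bullet}\ddianni^{\mathbb{C}^\bullet}a$. The left-hand side is $\bigvee\{\abla^{\mathbb{C}}b\mid b\le a\}$. For the right-hand side, note that $b\notin\ddianni^{\mathbb{C}^\bullet}a$ iff $a\le b$, so it equals $\bigwedge\{\abla^{\mathbb{C}}b\mid a\le b\}$. The key observation is then that, by monotonicity of $\abla^{\mathbb{C}}$, the element $\abla^{\mathbb{C}}a$ is the greatest member of the first family (as $a$ is the greatest $b$ with $b\le a$) and the least member of the second (as $a$ is the least $b$ with $a\le b$); hence both the join and the meet collapse to $\abla^{\mathbb{C}}a$, giving the required equality.

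I expect the preservation computations to be entirely routine; the only genuinely load-bearing point is the supportedness equality, and even there the work reduces to the single remark that a monotone map sends the top of a principal down-set and the bottom of a principal up-set to, respectively, the supremum and the infimum of its image on those sets, so that both sides equal $\abla^{\mathbb{C}}a$.
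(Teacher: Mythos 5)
Your proposal is correct and follows essentially the same route as the paper: the same direct computations establishing the complete preservation/reversal properties of each operation (with the hypothesis on $>^{\mathbb{C}}$ entering only for the second coordinate of $\mtra^{\mathbb{C}^\bullet}$), and the same collapse of $\bigvee\{\abla^{\mathbb{C}}b\mid b\le a\}$ and $\bigwedge\{\abla^{\mathbb{C}}b\mid a\le b\}$ to $\abla^{\mathbb{C}}a$ for supportedness. Your explicit remark that monotonicity of $\abla^{\mathbb{C}}$ is what makes $\abla^{\mathbb{C}}a$ the greatest, respectively least, element of those families is a welcome clarification of a step the paper leaves implicit.
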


\begin{proof}
Let $\mathbb{C}=(\mathbb{A}, \nabla^\mathbb{C})$  be a  complete m-algebra. First we show that $\mathbb{C}^\bullet$ is a complete heterogeneous m-algebra. For $X \subseteq \mathbb{A}$ and $\Gamma \subseteq \mathcal{P}(\mathbb{A})$, 
\begin{align*} 
\dboxni^{\mathbb{C}^\bullet} \bigwedge X = \{ b \in \mathbb{A} \mid b \le \bigwedge X\}=\bigcap_{x \in X} \{ b \in \mathbb{A} \mid b \le  x\}= \bigcap_{x \in X} \dboxni^{\mathbb{C}^\bullet}x\\
\ddianni^{\mathbb{C}^\bullet} \bigvee X =\{b \in \mathbb{A} \mid  \bigvee  X\not < b\} = \bigcup_{x \in X}\{b \in \mathbb{A} \mid  x\not < b\}= \bigcup_{x \in X} \ddianni^{\mathbb{C}^\bullet}x \\
\xdianu^{\mathbb{C}^\bullet}\bigcup \Gamma = \bigvee\{\nabla^\mathbb{C} b \mid b \in \bigcup\Gamma\} = \bigvee_{Y \in \Gamma} \bigvee \{\nabla^\mathbb{C} b \mid b \in Y\}= \bigvee_{Y \in \Gamma} \xdianu^{\mathbb{C}^\bullet}Y\\
\xboxnuc^{\mathbb{C}^\bullet} \bigcap \Gamma = \bigwedge\{ \nabla^\mathbb{C} b \mid b \not \in \bigcap \Gamma\} =\bigcap_{Y \in \Gamma} \bigwedge \{ \nabla^\mathbb{C} b \mid b \not \in Y\}= \bigcap_{Y \in \Gamma}\xboxnuc^{\mathbb{C}^\bullet} Y.
\end{align*}
Let us show that $\mathbb{C}^\bullet$ is supported. For every $a\in \mathbb{A}$,
\begin{align*}
\xdianu^{\mathbb{C}^\bullet}\dboxni^{\mathbb{C}^\bullet} a =  \xdianu^{\mathbb{C}^\bullet} \{b \in \mathbb{A} \mid b\le a \}=\bigvee \{ \nabla^\mathbb{C} b \mid b \le a \}= \nabla^\mathbb{C} a, \\
\xboxnuc^{\mathbb{C}^\bullet}\ddianni^{\mathbb{C}^\bullet} a =\xboxnuc^{\mathbb{C}^\bullet} \{ b\in \mathbb{A}\mid a \not \le b\} = \bigwedge\{\nabla^\mathbb{C} b \mid a \le b\} = \nabla^\mathbb{C} a.
\end{align*}
Hence, $\xdianu^{\mathbb{C}^\bullet}\dboxni^{\mathbb{C}^\bullet} a=\xboxnuc^{\mathbb{C}^\bullet}\ddianni^{\mathbb{C}^\bullet} a $.

Let $\mathbb{C}=(\mathbb{A}, >^\mathbb{C})$ be a complete c-algebra. That $\dboxni^{\mathbb{C}^\bullet}$ is completely join preserving can be proved as shown above. As to the remaining connectives, for any $X \subseteq \mathbb{A}$ and $\Gamma \subseteq \mathcal{P}$, 
\begin{align*}
 \mbox \drhdnni^{\mathbb{C}^\bullet}\bigvee X = \{b \in \mathbb{A}\mid \bigvee X \le b\}=\bigcap_{x \in X} \{b \in \mathbb{A}\mid x \le b\}= \bigcap_{x \in X}  \mbox \drhdnni^{\mathbb{C}^\bullet}x\\
 \bigcup \Gamma \mtra^{\mathbb{C}^\bullet} a = \bigwedge \{ b >^\mathbb{C} a  \mid b \in \bigcup \Gamma\}=\bigwedge_{Y \in \Gamma} \bigwedge  \{ b >^\mathbb{C}a  \mid b \in  Y\}= \bigwedge_{Y \in \Gamma} (Y \mtra^{\mathbb{C}^\bullet} a)\\
B \mtra^{\mathbb{C}^\bullet} \bigwedge X =  \bigwedge \{ b >^\mathbb{C}  \bigwedge X  \mid b \in B \}= \bigwedge_{x\in X} \bigwedge \{ b >^\mathbb{C}  x\mid b \in B \}=  \bigwedge_{x\in X}(B \mtra^{\mathbb{C}^\bullet}x).
\end{align*}
\end{proof}

\begin{prop}
\label{prop:alg characterization of single into multi}
If   $\mathbb{C}$ is a complete  m-algebra (resp.~c-algebra), then   $\mathbb{C} \cong (\mathbb{C}^\bullet)_\bullet$. Moreover, if $\mathbb{H}$ is a complete supported heterogeneous m-algebra (resp.~c-algebra), then $\mathbb{H}\cong \mathbb{C}^\bullet$ for some complete  m-algebra (resp.~c-algebra) $\mathbb{C}$ iff $\mathbb{H} \cong (\mathbb{H}_\bullet)^\bullet$.
\end{prop}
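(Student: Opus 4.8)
The plan is to establish the two assertions in turn, building on the computations already carried out in the proof of Lemma~\ref{lemma:cbullet} and on the observation that both constructions $(\cdot)_\bullet$ and $(\cdot)^\bullet$ preserve isomorphisms.

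For the first assertion, note that the Boolean reduct of $(\mathbb{C}^\bullet)_\bullet$ is $\mathbb{A}$ itself, so it suffices to check that the single-type operation of $(\mathbb{C}^\bullet)_\bullet$ coincides with that of $\mathbb{C}$. In the m-algebra case, unravelling the definitions gives $\abla^{(\mathbb{C}^\bullet)_\bullet} a = \xdianu^{\mathbb{C}^\bullet}\dboxni^{\mathbb{C}^\bullet} a$, and the identity $\xdianu^{\mathbb{C}^\bullet}\dboxni^{\mathbb{C}^\bullet} a = \bigvee\{\abla^{\mathbb{C}} b \mid b \leq a\} = \abla^{\mathbb{C}} a$ (monotonicity of $\abla^{\mathbb{C}}$ together with $a\leq a$) is exactly the one appearing in the proof of Lemma~\ref{lemma:cbullet}. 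In the c-algebra case one computes $\dboxni^{\mathbb{C}^\bullet} a \cap \drhdnni^{\mathbb{C}^\bullet} a = \{b\mid b\leq a\}\cap\{b\mid a\leq b\} = \{a\}$, whence $a >^{(\mathbb{C}^\bullet)_\bullet} b = \{a\}\mtra^{\mathbb{C}^\bullet} b = a >^{\mathbb{C}} b$. In either case the identity map on $\mathbb{A}$ is the required isomorphism $\mathbb{C}\cong (\mathbb{C}^\bullet)_\bullet$.

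For the second assertion, the direction from right to left is immediate: if $\mathbb{H}\cong (\mathbb{H}_\bullet)^\bullet$, then $\mathbb{C}:=\mathbb{H}_\bullet$ witnesses the claim, since $\mathbb{H}_\bullet$ is a complete m-algebra (resp.~c-algebra)---its Boolean reduct $\mathbb{A}$ is complete because $\mathbb{H}$ is, $\abla^{\mathbb{H}_\bullet}$ is monotone, and $>^{\mathbb{H}_\bullet}$ is completely meet-preserving in its second coordinate, inherited from $\mtra^{\mathbb{H}}$. For the converse, suppose $\mathbb{H}\cong \mathbb{C}^\bullet$ for a complete m-algebra (resp.~c-algebra) $\mathbb{C}$. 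The crucial point is that $(\cdot)_\bullet$ and $(\cdot)^\bullet$ send isomorphisms to isomorphisms: a heterogeneous isomorphism $\phi=(\phi_{\mathbb{A}},\phi_{\mathbb{B}})$ commutes with all heterogeneous operators, so $\phi_{\mathbb{A}}$ commutes with the derived single-type operation and is thus an isomorphism $\mathbb{H}_\bullet\cong(\mathbb{C}^\bullet)_\bullet$; conversely a single-type isomorphism $f$ lifts to the pair $(f, f[\cdot])$, with $f[\cdot]$ the direct image, which one checks to be a heterogeneous isomorphism. Chaining these with the first assertion, $\mathbb{H}\cong\mathbb{C}^\bullet$ yields $\mathbb{H}_\bullet\cong(\mathbb{C}^\bullet)_\bullet\cong\mathbb{C}$, and applying $(\cdot)^\bullet$ gives $(\mathbb{H}_\bullet)^\bullet\cong\mathbb{C}^\bullet\cong\mathbb{H}$, as required.

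The only genuinely delicate step, and where I would concentrate the verification, is the isomorphism-preservation of the two constructions---in particular checking that $f[\cdot]$ commutes with each operator of $\mathbb{C}^\bullet$, notably $\xdianu^{\mathbb{C}^\bullet}$ and $\xboxnuc^{\mathbb{C}^\bullet}$, for which one uses that a Boolean isomorphism between complete Boolean algebras preserves arbitrary joins and meets and that $f$ commutes with $\abla^{\mathbb{C}}$ (resp.~$>^{\mathbb{C}}$). These verifications are routine but must be carried out coordinate by coordinate; the remainder of the argument is bookkeeping on top of Lemma~\ref{lemma:cbullet} and the first assertion.
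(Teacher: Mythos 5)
Your proof is correct and follows essentially the same route as the paper's: the first part reduces to the computation $\xdianu^{\mathbb{C}^\bullet}\dboxni^{\mathbb{C}^\bullet}a=\abla^{\mathbb{C}}a$ from Lemma~\ref{lemma:cbullet}, and the second part is obtained by chaining $\mathbb{H}\cong\mathbb{C}^\bullet\cong((\mathbb{C}^\bullet)_\bullet)^\bullet\cong(\mathbb{H}_\bullet)^\bullet$ one way and taking $\mathbb{C}:=\mathbb{H}_\bullet$ the other way. You merely make explicit two points the paper leaves implicit, namely the c-algebra computation $\dboxni^{\mathbb{C}^\bullet}a\cap\drhdnni^{\mathbb{C}^\bullet}a=\{a\}$ and the fact that $(\cdot)^\bullet$ and $(\cdot)_\bullet$ preserve isomorphisms.
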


\begin{proof}
For the first part of the statement, by definition, $\mathbb{C}$ and $(\mathbb{C}^\bullet)_\bullet$ have the same underlying Boolean algebra. Moreover, $\abla^{(\mathbb{C}^\bullet)_\bullet} a= \xdianu^{\mathbb{C}^\bullet}\dboxni^{\mathbb{C}^\bullet}a = \abla^{\mathbb{C}} a$ for every $a\in \mathbb{C}$, the first identity holding by definition, the second one being shown in the proof of Lemma \ref{lemma:cbullet}. 

As to the second part, for the left to right direction, assume that $\mathbb{H}\cong \mathbb{C}^\bullet$ for some complete  m-algebra (resp.~c-algebra) $\mathbb{C}$. From the first part of the proposition we know that $\mathbb{C} \cong (\mathbb{C}^\bullet)_\bullet$. Then $\mathbb{H}\cong \mathbb{C}^\bullet \cong ((\mathbb{C}^\bullet)_\bullet)^\bullet\cong  (\mathbb{H}_\bullet)^\bullet$. For the right to left direction, $\mathbb{H}_\bullet$ is the required complete m-algebra (resp. c-algebra).
\end{proof}

The proposition above characterizes up to isomorphism the supported heterogeneous  m-algebras (resp.~c-algebras) which arise from single-type m-algebras (resp.~c-algebras).

\subsection{Representing n-frames and c-frames as two-sorted Kripke frames}
Thanks to the discrete dualities discussed in Sections \ref{ssec:prelim} and \ref{ssec:2sorted Kripke frame}, we can transfer the algebraic characterization of Proposition \ref{prop:alg characterization of single into multi} to the side of frames, as detailed in this subsection.

\begin{defn}\label{def:two-sortedFrames}
For any n-frame (resp.~c-frame) $\mathbb{F}$, we let $\mathbb{F}^\star: = ((\mathbb{F}^\ast)^\bullet)_+$, and for every supported two-sorted n-frame (resp.~c-frame) $\mathbb{K}$, we let $\mathbb{K}_\star: = ((\mathbb{K}^+)_\bullet)_\ast$.
\end{defn}
Spelling out the definition above, if $\mathbb{F}=(W,\nu)$ (resp.~$\mathbb{F}=(W, f)$) then  $\mathbb{F}^\star = (W,\mathcal{P} (W), R_\ni,$ $R_{\not\ni}, R_{\nu}, R_{\nu^c})$  (resp.~$\mathbb{F}^\star = (W,\mathcal{P} (W),R_{\not\ni}, R_{\ni}, T_f)$) where:
{\small{
\begin{itemize}
\item $R_{\nu} \subseteq W\times \mathcal{P}(W)$ is defined as $x R_{\nu}  Z$ iff $Y\in \nu(x)$;
\item $R_{\nu^c} \subseteq W\times \mathcal{P}(W)$ is defined as $x R_{\nu^c}  Z$ iff $Z\notin \nu(x)$;
\item  $R_{\ni} \subseteq \mathcal{P}(W) \times W$ is defined as $Z R_{\ni}  x$ iff $x\in Z$;
\item  $R_{\not\ni} \subseteq \mathcal{P}(W) \times W$ is defined as $Z R_{\not\ni}  x$ iff $x\notin Z$;
\item $T_f\subseteq W\times \mathcal{P}(W) \times W$ is defined as $T_f(x, Z, x')$ iff $x'\in f(x, Z)$.
\end{itemize}
}}
Moreover, if $\mathbb{K} = (X, Y, R_\ni, R_{\not\ni}, R_{\nu}, R_{\nu^c})$ (resp.~$\mathbb{K} = (X, Y, R_\ni, R_{\not\ni}, T_f)$), then $\mathbb{K}_{\star} = (X, \nu_{\star})$
(resp.~$\mathbb{K}_\star = (X, f_\star)$) where:
{\small{
\begin{itemize}
\item $\nu_\star (x)  = \{D\subseteq X\mid x\in R_\nu^{-1}[(R_\ni^{-1}[D^c])^c] \} = \{D\subseteq X\mid x\in (R_{\nu^c}^{-1}[(R_{\not\ni}^{-1}[D])^c])^c\}$;
\item $f_\star(x, D)  
=  \bigcap \{C\subseteq X\mid x\in T^{(0)}_f[ \{C\}, D^c]\}$.
\end{itemize}
}}

\begin{lem}\label{lem:supported}
If $\mathbb{F} = (W,\nu)$ is an n-frame, then  $\mathbb{F}^\star$ is a supported two-sorted n-frame. 
\end{lem}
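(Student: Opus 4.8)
The plan is to prove the lemma by unwinding the definition $\mathbb{F}^\star = ((\mathbb{F}^\ast)^\bullet)_+$ and composing three facts that are already available, rather than by computing the relations of $\mathbb{F}^\star$ by hand. First I would record that $\mathbb{F}^\ast$ is a perfect, hence complete, m-algebra, by the Proposition stating that the complex algebra of an n-frame is a perfect m-algebra. This is the point at which the standing assumption that $\nu$ is monotone enters: it is exactly what guarantees that $\abla^{\mathbb{F}^\ast}$ is monotone, and thus that $\mathbb{F}^\ast$ qualifies as an m-algebra.

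Next, applying Lemma~\ref{lemma:cbullet} to $\mathbb{C} := \mathbb{F}^\ast$, I obtain that $(\mathbb{F}^\ast)^\bullet$ is a complete supported heterogeneous m-algebra. Its two carriers are $\mathcal{P}(W)$ and $\mathcal{P}(\mathcal{P}(W))$, both of which are perfect (complete and atomic) Boolean algebras, so $(\mathbb{F}^\ast)^\bullet$ is not merely complete but perfect. Hence $(\mathbb{F}^\ast)^\bullet$ is a perfect supported heterogeneous m-algebra. Finally, I would invoke the Lemma asserting that $\mathbb{H}_+$ is a supported two-sorted n-frame whenever $\mathbb{H}$ is a perfect supported heterogeneous m-algebra, instantiated at $\mathbb{H} := (\mathbb{F}^\ast)^\bullet$. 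Since $\mathbb{F}^\star = ((\mathbb{F}^\ast)^\bullet)_+$ by definition, this gives precisely the desired conclusion.

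The argument carries no real obstacle beyond bookkeeping; the one step that deserves care is the upgrade from \emph{complete} to \emph{perfect} in the second paragraph, which is needed because the final lemma presupposes a perfect algebra and which holds here only because the relevant carriers happen to be full powerset algebras. If a self-contained verification is preferred, the alternative is to check the support identity (\ref{eq:supported n-frames}) directly from the explicit relations of $\mathbb{F}^\star$: one computes that both $R_{\nu}^{-1}[(R_{\ni}^{-1}[D^c])^c]$ and $(R_{\nu^c}^{-1}[(R_{\not\ni}^{-1}[D])^c])^c$ reduce to $\{x\in W\mid D\in\nu(x)\}$, the existential quantifier over the intermediate neighbourhood $Z$ collapsing precisely because $\nu$ is upward closed (for the left-hand side one takes $Z=D$ and uses upward closure; for the right-hand side one again takes $Z=D$ and uses that no superset of a non-neighbourhood can be forced to be a neighbourhood). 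This makes transparent that monotonicity of $\nu$ is the substantive ingredient — the very fact already packaged inside Lemma~\ref{lemma:cbullet} — so the two routes agree on where the content lies.
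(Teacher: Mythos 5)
Your argument is correct, and it takes a genuinely different route from the one in the paper. The paper verifies the support identity \eqref{eq:supported n-frames} directly: it takes the explicit relations of $\mathbb{F}^\star$ and shows $(R_{\nu^c}^{-1}[(R_{\not\ni}^{-1}[D])^c])^c = R_{\nu}^{-1}[(R_{\ni}^{-1}[D^c])^c]$ by a chain of first-order rewritings, with upward closure of $\nu(w)$ entering at a single marked step --- essentially the ``self-contained verification'' you offer as an alternative at the end. Your main route instead derives the lemma by composing results already on record: $\mathbb{F}^\ast$ is a perfect m-algebra, Lemma~\ref{lemma:cbullet} makes $(\mathbb{F}^\ast)^\bullet$ a complete supported heterogeneous m-algebra, the powerset carriers $\mathcal{P}(W)$ and $\mathcal{P}(\mathcal{P}(W))$ upgrade completeness to perfection, and the lemma asserting that $\mathbb{H}_+$ is a supported two-sorted n-frame for perfect supported $\mathbb{H}$ then applies to $\mathbb{F}^\star = ((\mathbb{F}^\ast)^\bullet)_+$. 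All steps check out, and you correctly flag the only delicate one: the complete-to-perfect upgrade, without which the final lemma (stated only for perfect algebras) would not apply. What your route buys is modularity and a clean localization of where the monotonicity of $\nu$ does its work (inside Lemma~\ref{lemma:cbullet}); what the paper's computation buys is self-containedness, since your chain ultimately leans on Proposition~\ref{prop:dduality multi-type}.2 --- via the identity $R_\nu^{-1}[(R_\ni^{-1}[D^c])^c] = \xdianu^{\mathbb{H}}\dboxni^{\mathbb{H}}D$ used in the proof of the $\mathbb{H}_+$ lemma, a duality whose proof the paper omits --- together with an explicit description of the relations of $\mathbb{F}^\star$ that the paper reuses elsewhere (e.g.\ in Proposition~\ref{prop:consequence preserved and reflected}).
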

\begin{proof}
 By definition, $\mathbb{F}^\star$ is a two-sorted n-frame. Moreover, for any $D\subseteq W$,
{\fns{
\begin{center}
\begin{tabular}{clll}
$(R_{\nu^c}^{-1}[(R_{\not\ni}^{-1}[D])^c])^c$ &  = & $ \{w\mid \forall X(X\notin \nu(w)\Rightarrow \exists u(X\not\ni u\ \&\ u\in D))\}$\\
&  = & $ \{w\mid \forall X(X\notin \nu(w)\Rightarrow   D \not\subseteq  X)\}$\\
&  = & $ \{w\mid \forall X(D\subseteq  X \Rightarrow X\in \nu(w) )\}$\\
&  = & $ \{w\mid \exists X(X\in \nu(w) \ \&\  X\subseteq  D)\}$ & ($\ast$)\\
&  = & $R_{\nu}^{-1}[(R_{\ni}^{-1}[D^c])^c].$\\
\end{tabular}
\end{center}
}}
To show the identity marked with $(\ast)$, from top to bottom, take $X:= D$; conversely, if $D\subseteq Z$ then $X\subseteq Z$, and since  by assumption $X\in \nu(w)$ and $\nu(w)$ is upward closed, we conclude that $Z\in \nu(w)$, as required.
\end{proof}

The next proposition is the frame-theoretic counterpart of Proposition \ref{prop:alg characterization of single into multi}. 
\begin{prop}
\label{prop:adjunction-frames}
If   $\mathbb{F}$ is an n-frame (resp.~c-frame), then   $\mathbb{F} \cong (\mathbb{F}^\star)_\star$. Moreover, if $\mathbb{K}$ is a supported two-sorted n-frame (resp.~c-frame), then $\mathbb{K}\cong \mathbb{F}^\star$ for some n-frame (resp.~c-frame) $\mathbb{F}$ iff $\mathbb{K} \cong (\mathbb{K}_\star)^\star$.
\end{prop}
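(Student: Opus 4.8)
The plan is to reduce everything to the algebraic characterization of Proposition \ref{prop:alg characterization of single into multi} by transporting the statement through the discrete dualities of Propositions \ref{prop:dduality single type} and \ref{prop:dduality multi-type}. The whole argument exploits the fact that $(\cdot)^\star$ and $(\cdot)_\star$ were \emph{defined} as the composites $\mathbb{F}^\star = ((\mathbb{F}^\ast)^\bullet)_+$ and $\mathbb{K}_\star = ((\mathbb{K}^+)_\bullet)_\ast$ of the maps $(\cdot)^\ast / (\cdot)_\ast$, $(\cdot)^+ / (\cdot)_+$, and $(\cdot)^\bullet / (\cdot)_\bullet$, each of which respects isomorphisms. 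Throughout I treat the m-frame/m-algebra case; the c-frame/c-algebra case runs in exact parallel, since every auxiliary result invoked is stated uniformly for both (with the supportedness clause being absent in the conditional case).

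First I would prove $\mathbb{F} \cong (\mathbb{F}^\star)_\star$ by a chain of isomorphisms. Unfolding the definition gives $(\mathbb{F}^\star)_\star = (((\mathbb{F}^\star)^+)_\bullet)_\ast$. By the first Proposition $\mathbb{F}^\ast$ is a perfect m-algebra, so by Lemma \ref{lemma:cbullet} together with the fact that both carriers of $(\mathbb{F}^\ast)^\bullet$ are powersets, $(\mathbb{F}^\ast)^\bullet$ is a perfect (supported) heterogeneous algebra. Hence the second item of Proposition \ref{prop:dduality multi-type} applies and yields $(\mathbb{F}^\star)^+ = (((\mathbb{F}^\ast)^\bullet)_+)^+ \cong (\mathbb{F}^\ast)^\bullet$. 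Applying $(\cdot)_\bullet$ and the first part of Proposition \ref{prop:alg characterization of single into multi} gives $((\mathbb{F}^\star)^+)_\bullet \cong ((\mathbb{F}^\ast)^\bullet)_\bullet \cong \mathbb{F}^\ast$, and finally $(\cdot)_\ast$ together with Proposition \ref{prop:dduality single type} gives $(\mathbb{F}^\star)_\star \cong (\mathbb{F}^\ast)_\ast \cong \mathbb{F}$.

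For the second part I would set $\mathbb{H} := \mathbb{K}^+$, which is a perfect supported heterogeneous algebra by the Lemma recording that $\mathbb{K}^+$ is perfect and supported, and show that the two frame-level conditions are equivalent, under $(\cdot)^+$, to the two algebra-level conditions of Proposition \ref{prop:alg characterization of single into multi} for $\mathbb{H}$. Concretely I would establish: (i) $\mathbb{K} \cong \mathbb{F}^\star$ for some n-frame $\mathbb{F}$ iff $\mathbb{K}^+ \cong \mathbb{C}^\bullet$ for some complete m-algebra $\mathbb{C}$; and (ii) $\mathbb{K} \cong (\mathbb{K}_\star)^\star$ iff $\mathbb{K}^+ \cong ((\mathbb{K}^+)_\bullet)^\bullet$. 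For (i), the forward direction transports $\mathbb{K} \cong \mathbb{F}^\star$ under $(\cdot)^+$ and simplifies exactly as above to $\mathbb{K}^+ \cong (\mathbb{F}^\ast)^\bullet =: \mathbb{C}^\bullet$; for the converse, any $\mathbb{C}$ with $\mathbb{C}^\bullet \cong \mathbb{K}^+$ must have perfect first carrier (it is isomorphic to $\mathcal{P}(X)$), hence $\mathbb{C}$ is perfect and $\mathbb{C} \cong (\mathbb{C}_\ast)^\ast$ by Proposition \ref{prop:dduality single type}, so that $\mathbb{K} \cong (\mathbb{K}^+)_+ \cong (\mathbb{C}^\bullet)_+ \cong (\mathbb{C}_\ast)^\star$, witnessed by $\mathbb{F} = \mathbb{C}_\ast$. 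For (ii), I would first compute $(\mathbb{K}_\star)^\star \cong (((\mathbb{K}^+)_\bullet)^\bullet)_+$ using that $((\mathbb{K}^+)_\bullet{}_\ast)^\ast \cong (\mathbb{K}^+)_\bullet$ (Proposition \ref{prop:dduality single type}, as $(\mathbb{K}^+)_\bullet$ is a perfect m-algebra), and then pass back and forth under $(\cdot)^+$ and $(\cdot)_+$ via the second item of Proposition \ref{prop:dduality multi-type}. Chaining (i), Proposition \ref{prop:alg characterization of single into multi} applied to $\mathbb{H} = \mathbb{K}^+$, and (ii) then yields the desired equivalence.

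The hard part will be purely the bookkeeping of completeness and perfectness: each application of a duality or representation map requires verifying that its input lies in the correct class (perfect heterogeneous algebra, complete or perfect m-/c-algebra), and one must confirm that these composite maps carry isomorphisms to isomorphisms. Once the appropriate perfectness is checked at each node, the proof is a formal chase around the commuting square of dualities, with no genuinely new computation beyond what Propositions \ref{prop:dduality single type}, \ref{prop:dduality multi-type}, \ref{prop:alg characterization of single into multi} and Lemma \ref{lemma:cbullet} already provide.
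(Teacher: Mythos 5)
Your proof is correct. The first part is essentially identical to the paper's: the same four-step chain $(\mathbb{F}^\star)_\star = (((((\mathbb{F}^\ast)^\bullet)_+)^+)_\bullet)_\ast \cong (((\mathbb{F}^\ast)^\bullet)_\bullet)_\ast \cong (\mathbb{F}^\ast)_\ast \cong \mathbb{F}$, justified by Propositions \ref{prop:dduality multi-type}, \ref{prop:alg characterization of single into multi} and \ref{prop:dduality single type} in the same order. For the second part, however, you take a genuinely different and heavier route. The paper does not transport anything back to the algebraic side: for the left-to-right direction it simply applies $(\cdot)^\star$ to the already-proved isomorphism $\mathbb{F} \cong (\mathbb{F}^\star)_\star$ to get $\mathbb{K}\cong \mathbb{F}^\star \cong ((\mathbb{F}^\star)_\star)^\star\cong (\mathbb{K}_\star)^\star$, and for the converse it observes that $\mathbb{K}_\star$ itself witnesses the existential --- i.e.\ it re-runs, verbatim at the frame level, the same two-line abstract argument used for the second part of Proposition \ref{prop:alg characterization of single into multi}. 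Your version instead reduces the frame-level biconditional to the algebra-level one via the equivalences (i) and (ii), which forces you to verify perfectness of $\mathbb{C}$ (via $\mathbb{A}\cong\mathcal{P}(X)$), of $\mathbb{C}^\bullet$, and of $((\mathbb{K}^+)_\bullet)^\bullet$ at each transfer across $(\cdot)^+/(\cdot)_+$. What your approach buys is an explicit statement that Proposition \ref{prop:adjunction-frames} is literally the dual image of Proposition \ref{prop:alg characterization of single into multi} under the discrete dualities, which is conceptually informative; what it costs is all the bookkeeping you yourself identify, none of which is needed for the paper's shorter argument. Note also that in both your converse of (i) and the paper's right-to-left direction one tacitly uses that $\mathbb{C}_\ast$ (resp.\ $\mathbb{K}_\star$) really is a (monotone) n-frame, which is supplied by the discussion of $\mathbb{A}_\ast$ in Section \ref{ssec:prelim}.
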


\begin{proof}
For the first part of the statement,
\begin{center}
\begin{tabular}{r c l l} 
$(\mathbb{F}^\star)_\star $ & $= $& $(((((\mathbb{F}^\ast)^\bullet)_+)^+)_\bullet)_\ast $ & definition of $(-)^\star$ and $(-)_\star$\\
 & $\cong $& $(((\mathbb{F}^\ast)^\bullet)_\bullet)_\ast$ & Proposition \ref{prop:dduality multi-type}.2,  $(\mathbb{F}^\ast)^\bullet$ perfect heterogeneous algebra\\ 
  & $= $& $(\mathbb{F}^\ast)_\ast$ & Proposition \ref{prop:alg characterization of single into multi}, since $\mathbb{F}^\ast$ is complete \\
    & $= $& $\mathbb{F}$. & Proposition \ref{prop:dduality single type}
  \end{tabular}
  \end{center}
As to  the second part, for the left to right direction, assume that $\mathbb{K}\cong \mathbb{F}^\star$ for some m-frame (resp.~c-frame) $\mathbb{F}$. From the first part of the statement we know that $\mathbb{F} \cong (\mathbb{F}^\star)_\star$. Then $\mathbb{K}\cong \mathbb{F}^\star \cong ((\mathbb{F}^\star)_\star)^\star\cong  (\mathbb{K}_\star)^\star$. For the right to left direction, $\mathbb{K}_\star$ is the required m-frame (resp. c-frame).

\end{proof}
\section{Embedding non-normal logics into two-sorted normal logics}
\label{sec:embedding}
The two-sorted frames and heterogeneous algebras discussed in the previous section serve as semantic environment for the multi-type  languages defined below.

\paragraph{Multi-type languages.} For a denumerable set $\mathsf{Prop}$ of atomic propositions, the languages $\mathcal{L}_{MT\abla}$ and $\mathcal{L}_{MT>}$ in  types $\mathsf{S}$ (sets) and $\mathsf{N}$ (neighbourhoods) over $\mathsf{Prop}$ are defined as follows:
{\small
\begin{center}
$\begin{array}{lll}
\mathsf{S} \ni A::= p  \mid \top \mid \bot \mid \neg A \mid A \land A \mid \xdianu \alpha\mid \xboxnuc\alpha &\quad\quad&\mathsf{S} \ni A::= p  \mid \top \mid \bot \mid \neg A \mid A \land A \mid  \alpha\mtra A
\\
\mathsf{N} \ni \alpha ::=  \dtop\mid \dbot \mid {\sim} \alpha \mid \alpha \dand \alpha \mid \dboxni A\mid \ddianni\alpha &\quad\quad&\mathsf{N} \ni \alpha ::=  \dtop\mid \dbot \mid {\sim} \alpha \mid \alpha \dand \alpha \mid \dboxni A\mid \drhdnni A.
\end{array}$
\end{center}
}
\paragraph{Algebraic semantics.} Interpretation of $\mathcal{L}_{MT\abla}$-formulas  (resp.~$\mathcal{L}_{MT>}$formulas) in heterogeneous m-algebras (resp.~c-algebras) under homomorphic assignments $h:\mathcal{L}_{MT\abla}\to \mathbb{H}$ (resp.~$h:\mathcal{L}_{MT>}\to \mathbb{H}$)  and validity of formulas in heterogeneous algebras ($\mathbb{H}\models\Theta$) are defined as usual. 

\paragraph{Frames and models.} $\mathcal{L}_{MT\abla}$-{\em models} (resp.~$\mathcal{L}_{MT>}$-{\em models}) are pairs $\mathbb{N} = (\mathbb{K}, V)$ s.t.~$\mathbb{K}= (X,Y,R_{\ni}, R_{\not\ni}, R_{\nu}, R_{\nu^c})$ is a supported two-sorted n-frame (resp.~$\mathbb{K}= (X,Y,R_{\ni},R_{\not\ni}, T_f)$ is a two-sorted c-frame) and $V:\mathcal{L}_{MT}\to\mathbb{K}^+$ is a heterogeneous algebra homomorphism of the appropriate signature. Hence, truth of formulas at states in models is defined as $\mb{N},z \Vdash \Theta$ iff $z\in V(\Theta)$ for every $z\in X\cup Y$ and $\Theta\in \mathsf{S}\cup\mathsf{N}$, and unravelling this stipulation   for formulas with a modal operator as main connective, we get:
{\small{
\begin{itemize}
\item $\mb{N},x \Vdash \xdianu \alpha \quad \text{iff}\quad  \mb{N},y \Vdash  \alpha \text{ for some } y \text{ s.t. } xR_\nu y$;
\item $\mb{N},x \Vdash \xboxnuc \alpha \quad \text{iff}\quad  \mb{N},y \Vdash  \alpha \text{ for all } y \text{ s.t. } xR_{\nu^c} y$;
\item $\mb{N},y \Vdash \dboxni A \quad \text{iff}\quad  \mb{N},x \Vdash  A \text{ for all } x \text{ s.t. } yR_\ni x$;
\item $\mb{N},y \Vdash \ddianni A \quad \text{iff}\quad  \mb{N},x \Vdash  A \text{ for some } x \text{ s.t. } yR_{\not\ni} x$;
\item $\mb{N},y \Vdash \drhdnni A \quad \text{iff}\quad  \mb{N},x \not\Vdash  A \text{ for all } x \text{ s.t. } yR_{\not\ni} x$;
\item $\mb{N},x \Vdash \alpha\mtra A \quad \text{iff}\quad  \text{ for all } y\text{ and all } x', \text{ if } T_f(x, y, x') \text{ and } \mb{N},y \Vdash \alpha \text{ then } \mb{N},x' \Vdash  A$.
\end{itemize}
}}
Global satisfaction (notation: $\mathbb{N}\Vdash\Theta$) is defined relative to the domain of the appropriate type, and frame validity (notation: $\mathbb{K}\Vdash\Theta$) is defined as usual. Thus, by definition, $\mathbb{K}\Vdash\Theta$ iff $\mathbb{K}^+\models \Theta$, 
and if $\mathbb{H}$ is a perfect heterogeneous algebra, then   $\mathbb{H}\models\Theta$ iff $\mathbb{H}_+\Vdash \Theta$. 
\paragraph{Correspondence theory for multi-type normal logics.} This semantic environment supports a straightforward extension of unified correspondence theory for multi-type normal logics, which includes the definition of inductive and analytic inductive formulas and inequalities in $\mathcal{L}_{MT\abla}$  and $\mathcal{L}_{MT>}$ (cf.~Section \ref{sec:analytic inductive ineq}), and a corresponding version of the algorithm ALBA \cite{CoPa:non-dist} for computing their first-order correspondents and analytic structural rules. 
\paragraph{Translation.} Correspondence theory and analytic calculi for the non-normal logics  $\mathbf{L}_\abla$ and $\mathbf{L}_>$ and their analytic extensions can be then obtained `via translation', i.e.~by recursively defining  translations  $\tau_1, \tau_2:\mathcal{L}_\abla \to \mathcal{L}_{MT\abla}$ and $(\cdot)^\tau:\mathcal{L}_>\to \mathcal{L}_{MT>}$   as follows: 
{\small
\begin{center}
\begin{tabular}{rcl c rcl c rcl}
$\tau_1(p)$              &$=$& $p$                                        && $\tau_2(p)$              &$=$& $p$      &&         $p^\tau$ &=& $p$         \\
$\tau_1(\varphi \xand \psi)$ &$=$& $\tau_1(\varphi) \xand \tau_1(\psi)$ && $\tau_2(\varphi \xand \psi)$ &$=$& $\tau_2(\varphi) \xand \tau_2(\psi)$  &&  $(\varphi \land \psi)^\tau$ &=& $\varphi^\tau \land \psi^\tau$  \\
$\tau_1(\xneg \varphi)$    &$=$& $\xneg \tau_2(\varphi)$                 && $\tau_2(\xneg \varphi)$    &$=$& $\xneg \tau_1(\varphi)$    && $(\neg\varphi)^\tau$ &=& $\neg \varphi^\tau$                \\
$\tau_1(\abla \varphi)$     &$=$& $\xdianu \dboxni \tau_1(\varphi)$  && $\tau_2(\abla \varphi)$     &$=$& $\xboxnuc \ddianni \tau_2(\varphi)$ && $(\varphi > \psi)^\tau$ &=& $(\dboxni \varphi^\tau \wedge\drhdnni \varphi^\tau)\mtra \psi^\tau$\\
\end{tabular}
\end{center}
 }
 Let  $\tau (\varphi \vdash \psi): = \varphi^\tau \vdash \psi^\tau$ if $\varphi \vdash \psi$ is an $\mathcal{L}_>$-sequent, and 
 $\tau (\varphi \vdash \psi): = \tau_1(\varphi)\vdash \tau_2(\psi)$ if $\varphi \vdash \psi$ is an $\mathcal{L}_\abla$-sequent.

\begin{prop}
\label{prop:consequence preserved and reflected}
If $\mathbb{F}$ is an n-frame (resp.~c-frame) and $\varphi\vdash \psi$ is an $\mathcal{L}_{\abla}$-sequent  (resp.~an $\mathcal{L}_{>}$-sequent), then $\mathbb{F}\Vdash \varphi\vdash \psi \quad\text{ iff }\quad \mathbb{F}^\star\Vdash \tau(\varphi\vdash \psi)$.
\end{prop}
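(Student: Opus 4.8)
The plan is to reduce the frame-theoretic equivalence to a purely algebraic one and then to settle the algebraic statement by a routine induction on formula complexity. First I would pass from frames to their complex algebras on both sides: by the frame--algebra duality recorded in Section~\ref{ssec:prelim}, $\mathbb{F}\Vdash\varphi\vdash\psi$ iff $\mathbb{F}^\ast\models\varphi\vdash\psi$, and by its multi-type analogue, $\mathbb{F}^\star\Vdash\tau(\varphi\vdash\psi)$ iff $(\mathbb{F}^\star)^+\models\tau(\varphi\vdash\psi)$. Since $\mathbb{F}^\star = ((\mathbb{F}^\ast)^\bullet)_+$ and $(\mathbb{F}^\ast)^\bullet$ is a perfect heterogeneous algebra (both its components are powerset algebras, as $\mathbb{F}^\ast$ is perfect and the second component is $\mathcal{P}(\mathbb{A})$), Proposition~\ref{prop:dduality multi-type}.2 gives $(\mathbb{F}^\star)^+ = (((\mathbb{F}^\ast)^\bullet)_+)^+\cong (\mathbb{F}^\ast)^\bullet$. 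Writing $\mathbb{C}:=\mathbb{F}^\ast$, the proposition thus reduces to the algebraic equivalence $\mathbb{C}\models\varphi\vdash\psi$ iff $\mathbb{C}^\bullet\models\tau(\varphi\vdash\psi)$.

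The heart of the argument is a semantic-value claim. Since $\mathbb{C}$ and $\mathbb{C}^\bullet$ share the same Boolean algebra $\mathbb{A}$ in the $\mathsf{S}$-component, every valuation $V:\mathsf{Prop}\to\mathbb{A}$ serves at once as an assignment into $\mathbb{C}$ and into $\mathbb{C}^\bullet$, so the universal quantifications over valuations on the two sides match. For the monotone case I would prove, by a simultaneous induction on $\varphi\in\mathcal{L}_\abla$ tracking both translations, that $\val{\tau_1(\varphi)}^{\mathbb{C}^\bullet} = \val{\varphi}^{\mathbb{C}} = \val{\tau_2(\varphi)}^{\mathbb{C}^\bullet}$; for the conditional case I would prove by induction on $\varphi\in\mathcal{L}_>$ that $\val{\varphi^\tau}^{\mathbb{C}^\bullet} = \val{\varphi}^{\mathbb{C}}$. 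The propositional atoms and the Boolean connectives are immediate, using that $\neg$ and $\wedge$ are interpreted identically in $\mathbb{A}$; the negation clause in the monotone case is precisely what forces the two translations to be carried along together, since $\tau_1(\neg\varphi)=\neg\tau_2(\varphi)$ and $\tau_2(\neg\varphi)=\neg\tau_1(\varphi)$ cross over.

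The modal and conditional inductive steps are where the earlier algebraic identities do the work. For $\abla$ I would combine $\tau_1(\abla\varphi)=\xdianu\dboxni\tau_1(\varphi)$ with the identity $\xdianu^{\mathbb{C}^\bullet}\dboxni^{\mathbb{C}^\bullet}a=\abla^{\mathbb{C}}a$ established inside the proof of Lemma~\ref{lemma:cbullet}, and dually $\tau_2(\abla\varphi)=\xboxnuc\ddianni\tau_2(\varphi)$ with $\xboxnuc^{\mathbb{C}^\bullet}\ddianni^{\mathbb{C}^\bullet}a=\abla^{\mathbb{C}}a$; the coincidence of these two composites is exactly the supportedness of $\mathbb{C}^\bullet$, which is what allows either translation to compute $\abla^{\mathbb{C}}$. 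For $>$ I would unfold $(\varphi>\psi)^\tau=(\dboxni\varphi^\tau\wedge\drhdnni\varphi^\tau)\mtra\psi^\tau$ and, using the induction hypothesis, recognise the right-hand side as the defining expression for $>^{(\mathbb{C}^\bullet)_\bullet}$, which coincides with $>^{\mathbb{C}}$ by Proposition~\ref{prop:alg characterization of single into multi}, since $\mathbb{C}\cong(\mathbb{C}^\bullet)_\bullet$.

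Granting the claim, the proposition follows immediately: validity of $\varphi\vdash\psi$ in $\mathbb{C}$ means $\val{\varphi}^{\mathbb{C}}\leq\val{\psi}^{\mathbb{C}}$ for every $V$, which by the claim is equivalent to $\val{\tau_1(\varphi)}^{\mathbb{C}^\bullet}\leq\val{\tau_2(\psi)}^{\mathbb{C}^\bullet}$ for every $V$ (and to the analogous inequality for $(\cdot)^\tau$ in the conditional case), i.e.\ to $\mathbb{C}^\bullet\models\tau(\varphi\vdash\psi)$. I expect the difficulty to be bookkeeping rather than conceptual: keeping the simultaneous induction on $\tau_1,\tau_2$ correctly aligned through the polarity-switching negation clause, and checking that the antecedent-on-$\tau_1$ / succedent-on-$\tau_2$ convention built into $\tau$ on sequents is harmless---which it is precisely because supportedness makes $\val{\tau_1(\chi)}^{\mathbb{C}^\bullet}=\val{\tau_2(\chi)}^{\mathbb{C}^\bullet}$ for every $\chi$.
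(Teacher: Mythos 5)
Your proposal is correct, but it takes a genuinely different route from the paper. The paper proves the statement by a direct state-based induction on $\varphi$, showing $(\mathbb{F},V),w\Vdash\varphi$ iff $(\mathbb{F}^\star,V),w\Vdash\tau_1(\varphi)$ iff $(\mathbb{F}^\star,V),w\Vdash\tau_2(\varphi)$ (resp.\ iff $(\mathbb{F}^\star,V),w\Vdash\varphi^\tau$), unfolding the relational clauses for $R_\nu, R_\ni, R_{\nu^c}, R_{\not\ni}, T_f$ at the modal step and invoking the supportedness of $\mathbb{F}^\star$ (Lemma~\ref{lem:supported}) to reconcile the two translations of $\abla$. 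You instead transfer both sides to complex algebras, identify $(\mathbb{F}^\star)^+\cong(\mathbb{F}^\ast)^\bullet$ via Proposition~\ref{prop:dduality multi-type}, and reduce everything to the algebraic claim $\mathbb{C}\models\varphi\vdash\psi$ iff $\mathbb{C}^\bullet\models\tau(\varphi\vdash\psi)$, settled by a value-preserving induction using the identities $\xdianu^{\mathbb{C}^\bullet}\dboxni^{\mathbb{C}^\bullet}a=\abla^{\mathbb{C}}a=\xboxnuc^{\mathbb{C}^\bullet}\ddianni^{\mathbb{C}^\bullet}a$ from the proof of Lemma~\ref{lemma:cbullet} and $a>^{(\mathbb{C}^\bullet)_\bullet}b=a>^{\mathbb{C}}b$ from Proposition~\ref{prop:alg characterization of single into multi}. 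The paper's argument is more self-contained and exhibits the pointwise satisfaction clauses that are reused in the correspondence computations; yours is more modular, leans on the already-established representation machinery, and in fact proves the stronger, purely algebraic statement for arbitrary complete m-/c-algebras (not just complex algebras of frames), which is closer to what the conservativity argument of Section~\ref{ssec: conservativity} actually invokes. Two small points to tidy up: the paper's proof of Proposition~\ref{prop:alg characterization of single into multi} only writes out the $\abla$-case of $\mathbb{C}\cong(\mathbb{C}^\bullet)_\bullet$, so for $>$ you should record the one-line computation $\dboxni^{\mathbb{C}^\bullet}a\dand\drhdnni^{\mathbb{C}^\bullet}a=\{a\}$, whence $\{a\}\mtra^{\mathbb{C}^\bullet}b=a>^{\mathbb{C}}b$; and you should note explicitly that assignments into $\mathbb{C}$ and into $\mathbb{C}^\bullet$ are both determined by maps $\mathsf{Prop}\to\mathbb{A}$ (there are no atoms of sort $\mathsf{N}$), so the quantification over valuations on the two sides indeed ranges over the same data.
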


\begin{proof} 
When $\mathbb{F}$ is an n-frame, the proposition is an immediate consequence of the following claim:
\[
(\mathbb{F},V),w\Vdash \varphi\quad\text{~iff~} \quad(\mathbb{F}^\star,V),w \Vdash \tau_1(\varphi) \quad\text{~iff~} \quad(\mathbb{F}^\star,V),w \Vdash \tau_2(\varphi),
\]
which can be proved by induction on $\varphi$. We only sketch the case in which $\varphi: =\nabla \psi$. In this case, $\tau_1(\nabla \psi) = \xdianu \dboxni \tau_1(\psi)$ and $\tau_2(\nabla \psi) =\xboxnuc \ddianni \tau_2(\psi)$.
\begin{center} 
\begin{tabular}{r c l c } 
$\mathbb{F},V,w\Vdash \nabla \psi$ &iff & 
$\exists D (D \in \nu(w) \ \& \ D \subseteq V( \psi ))$\\
&iff & $\exists D(wR_\nu D \ \& \ \forall d (DR_\ni d \Rightarrow d \in V(\psi)))$\\
&iff & 
$\exists D(wR_\nu D \ \& \ \forall d (DR_\ni d \Rightarrow d \in V(\tau_1(\psi)))$ &$\quad $Induction hypothesis\\
&iff &$\mathbb{F}^\star,V,w \Vdash \xdianu \dboxni \tau_1(\psi)$\\
\end{tabular}
\end{center}

\begin{center}
\begin{tabular}{r c l c } 
$\mathbb{F},V,w\Vdash \nabla \psi$ &iff & 
$\exists D (D \in \nu(w) \ \& \ D \subseteq V( \psi ))$\\
&iff & $\exists D(wR_\nu D \ \& \ \forall d (DR_\ni d \Rightarrow d \in V(\psi)))$\\
$(\ast)$&iff & 
$\forall D (w R_{\nu^c}D  \Rightarrow \exists d(D R_{\not \ni} d \ \& \ d \in V(\psi)))$\\
&iff & 
$\forall D (w R_{\nu^c}D  \Rightarrow \exists d(D R_{\not \ni} d \ \& \ d \in V(\tau_2(\psi))))$ & $\quad$ Induction hypothesis\\
&iff& $ \mathbb{F}^\star,V,w \Vdash \xboxnuc \ddianni \tau_2(\psi)$.\\
\end{tabular}
\end{center}
The equivalence marked by $(\ast)$ follows from Lemma \ref{lem:supported}.

When $\mathbb{F}$ is a $c$-frame, the proposition is an immediate consequence of the following claim,
which can be shown by induction on $\varphi$.
\[
(\mathbb{F},V),w \Vdash \varphi \quad \text{~iff~}\quad  (\mathbb{F}^\star ,V),w \Vdash \varphi ^\tau.
\] We only sketch the case in which $\varphi: = \varphi > \psi$. In this case, $(\varphi >\psi)^\tau = (\dboxni \varphi^\tau \wedge\drhdnni \varphi^\tau)\mtra \psi^\tau$. 
\begin{align*}
(\mathbb{F},V), w \Vdash \varphi > \psi &\text{~iff~} f(w,V(\varphi)) \subseteq V(\psi) \\
&\text{~iff~} \forall x(x \in f(w , V(\varphi)) \Rightarrow x \in V(\psi))\\
&\text{~iff~} \forall x\forall Y( x\in f(w,Y )\ \& \ Y =V(\varphi)\Rightarrow x \in V(\psi))\\
&\text{~iff~} \forall x\forall Y( x\in f(w,Y )\ \& \ Y =V(\varphi^\tau)\Rightarrow x \in V(\psi^\tau)) && \text{I.H.}\\
&\text{~iff~} \forall x\forall Y(T_f(w, Y,x) \ \& \ (\forall y(YR_\ni y \Rightarrow y \in V(\varphi^\tau )))\ \&\ 
\\&\qquad\qquad\qquad\qquad\qquad (\forall y(YR_{\not \ni} y \Rightarrow y \not\in V(\varphi^\tau))) \Rightarrow x \in V(\psi^\tau))\\
&\text{~iff~}  (\mathbb{F}^\star,V), w \Vdash (\dboxni \varphi^\tau \wedge\drhdnni \varphi^\tau)\mtra \psi^\tau.
\end{align*}
\end{proof}

With this framework in place, we are in a position to (a) retrieve correspondence results in the setting of {\em non-normal} logics, such as those collected in Theorem \ref{theor:correspondence-noAlba},
as instances of the general  Sahlqvist theory for multi-type {\em normal} logics, and (b) recognize whether the translation of a non-normal  axiom  is analytic inductive, and compute its corresponding analytic structural rules (cf.~Section \ref{sec:ALBA runs}). 

\section{Algorithmic correspondence for non-normal logics}
\label{section:correspondence}
In this section, we detail how the two-sorted environment introduced and discussed in the previous sections can be used to establish a Sahlqvist-type correspondence framework for {\em classes} of non-normal logics (the generality of this approach is further discussed in Section \ref{sec: Conclusions}) which can be specialized to the signatures of monotone modal logic and conditional logic, encompasses and extends the well-known correspondence-theoretic results for these logics collected in Theorem \ref{theor:correspondence-noAlba}, and brings them into the fold of unified correspondence theory  \cite{CoGhPa14,CoPa:non-dist}. 
The unified correspondence approach pivots on the order theoretic properties of the algebraic interpretation of logical connectives. As pointed out in \cite{bilkova2018logic}, when the relevant order theoretic properties hold in a given multi-type setting such as the one introduced in Section \ref{sec: semantic analysis}, the insights, tools and results of unified correspondence theory can be straightforwardly transferred to it. Specifically for the present cases of monotone modal logic and conditional logic, this means, firstly, that we can specialize the definition of inductive and analytic inductive inequalities/sequents to the  languages $\mathcal{L}_{MT\abla}$ and $\mathcal{L}_{MT>}$ defined in the previous section. This definition is given in Section \ref{sec:analytic inductive ineq}; 
in the following table, we list the translations of the axioms of Theorem \ref{theor:correspondence-noAlba}, and for each, the last column of the table specifies whether its translation is analytic inductive. 

{\small{
\begin{center}
\begin{tabular}{@{}r l c l cc c}
&Axiom && Translation & Inductive & Analytic \\
N\, & $\abla \top\quad$ &&   $\top\leq \xboxnuc\ddianni \top$ & $\checkmark$ & $\checkmark$\\
P\, & $\neg \abla \bot\quad$ &&  $\top\leq \neg \xdianu \dboxni \bot$ & $\checkmark$ & $\checkmark$ \\
C\, &$ \abla p \land \abla q \to \abla(p \land q)\quad$ &&  $\xdianu \dboxni  p \land\xdianu \dboxni q \leq \xboxnuc\ddianni (p \land q)$ & $\checkmark$ & $\checkmark$\\
T\, &$\abla p \to p \quad$ &&   $\xdianu \dboxni p\leq p $& $\checkmark$ & $\checkmark$\\
4\, & $\abla \abla  p \to \abla p\quad$ &&   $\xdianu \dboxni \xdianu \dboxni   p \leq\xboxnuc\ddianni  p$& $\checkmark$ & $\times$\\
4'\, & $\abla p \to \abla \abla p\quad$ &&  $\xdianu \dboxni  p \leq \xboxnuc\ddianni\xboxnuc\ddianni  p$& $\checkmark$ & $\times$\\
5\, & $\neg \abla \neg p \to \abla \neg \abla \neg p \quad$ &&  $\neg \xboxnuc\ddianni\neg  p \leq \xboxnuc\ddianni \neg\xdianu \dboxni \neg p$& $\checkmark$ & $\times$\\
B\, & $p \to \abla \neg \abla \neg p \quad$ &&   $  p \leq \xboxnuc\ddianni\neg \xdianu \dboxni \neg p$& $\checkmark$ & $\times$\\
D\, & $\abla p \to \neg \abla \neg p\quad$ &&   $\xdianu \dboxni   p \leq  \neg \xdianu \dboxni \neg p$& $\checkmark$ & $\checkmark$\\
CS\, & $(p\wedge q) \to (p > q)\quad$ &&  $(p\wedge q) \leq ((\dboxni p \wedge\drhdnni p)\mtra q)$& $\checkmark$ & $\checkmark$\\
CEM\, & $(p > q) \vee (p> \neg q)\quad$ &&  $\top\leq ((\dboxni p \wedge\drhdnni p)\mtra q) \vee ((\dboxni p \wedge\drhdnni p)\mtra \neg q)$& $\checkmark$ & $\checkmark$\\
ID\, & $p >  p \quad$ &&$\top\leq (\dboxni p \wedge\drhdnni p)\mtra p$& $\checkmark$ & $\checkmark$\\
CN\, & $(p >  q )\lor (q >p) \quad$ &&$\top\leq (\dboxni p \wedge\drhdnni p)\mtra q )\lor ( (\dboxni q \wedge\drhdnni q)\mtra p$& $\checkmark$ & $\checkmark$\\
T\, & $(\bot >\neg p)\to p  \quad$ &&$
((\dboxni \bot \wedge\drhdnni \bot)\mtra \neg p) \leq p $& $\checkmark$ & $\times$\\

\end{tabular}
\end{center}
}} 
\begin{remark}
\label{rmk: explanation for double translation}
The positional translation of $\mathcal{L}_\abla$-axioms/sequents guarantees that a greater number of translated axioms are analytic inductive. To illustrate this point, consider axiom C above; translating it using e.g.~only $\tau_1$ yields     $\xdianu \dboxni  p \land\xdianu \dboxni q \leq \xdianu \dboxni  (p \land q)$ which is inductive but not analytic, since in $- \xdianu \dboxni  (p \land q)$ some branches (in fact all) are not good. This trick is not a panacea: occurrences of nested $\abla$ connectives, as in axiom 4, 4', 5 and B, will give rise to McKinsey-type nestings of modal operators also under the positional translation, which results in some branches being not good.
\end{remark} 

Secondly, the algorithm ALBA defined in \cite{CoPa:non-dist} can be straightforwardly adapted to $\mathcal{L}_{MT\abla}$ and $\mathcal{L}_{MT>}$ and their algebraic and relational semantics; since the translations of all the axioms listed above are inductive, by the general theory, ALBA succeeds in eliminating the propositional variables occurring in them and in equivalently transforming their validity on frames into suitable conditions expressible in the predicate languages canonically associated with n-frames (resp.~c-frames). The ALBA runs on these axioms are reported in Section \ref{sec:ALBA runs}.


To further expand on how the correspondence results of Theorem \ref{theor:correspondence-noAlba} can be obtained as instances of algorithmic correspondence on two-sorted frames and their complex algebras, let $\mathbb{F}$ be an n-frame (resp.~a c-frame) and $\varphi \vdash \psi$ an  $\mathcal{L}_{\abla}$-sequent (resp.~$\mathcal{L}_{\abla}$-sequent). Let $\tau(\varphi \vdash \psi)$ denote $\tau_1(\varphi) \vdash \tau_2(\psi)$ or $\varphi^\tau \vdash \psi^\tau$ as appropriate. Let $ALBA(\tau(\varphi \vdash \psi) )$ denote the output of ALBA when run on $\tau(\varphi \vdash \psi)$, and $\mathsf{ST}(ALBA(\tau(\varphi \vdash \psi) ))$ be its standard translation in the appropriate predicate language of n-frames (resp.~c-frames). Then the following chain of equivalences holds:
\begin{center}
\begin{tabular}{c ll}
&$\mathbb{F}\Vdash \varphi \vdash \psi$\\
iff &$\mathbb{F}^\star \Vdash \tau(\varphi \vdash \psi) $& Proposition \ref{prop:consequence preserved and reflected} \\
iff &$(\mathbb{F}^\star )^+\models \tau(\varphi \vdash \psi) $  & def.~of validity on two sorted-frames \\
iff &$(\mathbb{F}^\star )^+\models ALBA(\tau(\varphi \vdash \psi) )$ & two-sorted correspondence\\
iff &$\mathbb{F}^\star \models \mathsf{ST}(ALBA(\tau(\varphi \vdash \psi) ))$ & \\
iff &$ \mathbb{F}\models \mathsf{ST}(ALBA(\tau(\varphi \vdash \psi) ))$ 
\end{tabular}
\end{center}
Let us concretely illustrate this proof pattern by applying it  to  the following axiom:
\begin{equation}
\label{eq:axiom C}\abla p \land \abla q \vdash \abla(p \land q).\end{equation}

Let $\mathbb{F}=(W,\nu)$ be a n-frame, and $\mathbb{F}^\star = (W,\mathcal{P} (W), R_\ni,$ $R_{\not\ni}, R_{\nu}, R_{\nu^c})$ be its associated two-sorted n-frame, where e.g.~$w R_\nu Z$ iff $Z\in \nu(w)$ and so on (full details are in Definition \ref{def:two-sortedFrames}). By Proposition \ref{prop:consequence preserved and reflected}, the validity of  axiom \eqref{eq:axiom C} on $\mathbb{F}$ is equivalent to  its translation 
\begin{equation}
\label{eq:axiom C transl}\xdianu \dboxni  p \land\xdianu \dboxni q \vdash \xboxnuc\ddianni (p \land q) \end{equation} being valid on $\mathbb{F}^\star$, which, by definition of satisfaction and validity in the two-sorted environment, is equivalent to the validity of axiom \eqref{eq:axiom C transl} on the complex algebra $(\mathbb{F}^\star )^+= (\mathcal{P}(W),\mathcal{P}\mathcal{P} (W), \dboxni,$ $\ddianni$ $, \xdianu, \xboxnuc)$.

According to Definition \ref{def:analytic inductive ineq}, axiom \eqref{eq:axiom C transl} is a ($\Omega,\epsilon$)-analytic inductive inequality for $p<_\Omega q$ and $\epsilon(p)= \epsilon(q)= 1$. Let us now run ALBA on axiom \eqref{eq:axiom C transl}.  In what follows we let $\mbf{i}_1$ and $ \mbf{i}_2$ be nominal variables of type $\mathsf{N}$ and $\mbf{m}$ be a co-nominal variable of type $\mathsf{N}$. This means that $\mbf{i}_1$ and $\mbf{i}_2$ are interpreted as --- and hence range in the set of --- atoms  of the second domain  $\mathcal{P}\mathcal{P} (W)$ of the perfect heterogeneous c-algebra $(\mathbb{F}^\star)^+ $ (i.e.~singleton subsets $\{Z\}$ for $Z\subseteq W$), while $\mbf{m}$ ranges over the set of  coatoms of $\mathcal{P}\mathcal{P} (W)$, and hence is interpreted as the collection of subsets $\{Z\}^c: = \{Y\subseteq W\mid Y\neq Z\}$ for an arbitrary $Z\subseteq W$.

As no preprocessing is needed, ALBA performs first approximation, which equivalently transforms  \[\forall p \forall q [\xdianu \dboxni  p \land\xdianu \dboxni q \leq \xboxnuc\ddianni (p \land q)]\]  into the following quasi-inequality: 
\[
 \forall p \forall q \forall \mbf{i}_1 \forall \mbf{i}_2 \forall \mbf{m}[(\mbf{i}_1 \le\dboxni  p  \ \& \ \mbf{i}_2  \le  \dboxni q \ \& \ \ddianni (p \land q) \le \mbf{m})\Rightarrow \xdianu \mbf{i}_1  \land\xdianu \mbf{i}_2 \leq \xboxnuc  \mbf{m}]. 
\]
Recall that 
$\xdiain$ and $\dboxni$ form a residuation pair. Hence, $\mbf{i}_1 \le\dboxni  p$ is equivalent to $\xdiain \mbf{i}_1 \le p$ and $ \mbf{i}_2  \le  \dboxni q$ is equivalent to $\xdiain \mbf{i}_2  \le  q$. Then the quasi inequality above is equivalent to the following quasi-inequality:
\[
 \forall p \forall q \forall \mbf{i}_1 \forall \mbf{i}_2 \forall \mbf{m}[(\mbf{i}_1 \le\dboxni  p  \ \& \ \xdiain \mbf{i}_2  \le  q \ \& \ \ddianni (p \land q) \le \mbf{m})\Rightarrow \xdianu \mbf{i}_1  \land\xdianu \mbf{i}_2 \leq \xboxnuc  \mbf{m}]. 
\]
The quasi inequality above is in Ackermann shape, hence the  Ackermann rule can be applied (cf.~\cite[Lemma 4.2]{CoPa:non-dist})  to eliminate all occurrences of $p$ and $q$, yielding the following (pure) quasi inequality in output
\[
 \forall \mbf{i}_1 \forall \mbf{i}_2 \forall \mbf{m}[ \ddianni ( \xdiain \mbf{i}_1 \land \xdiain \mbf{i}_2) \le \mbf{m}\Rightarrow \xdianu \mbf{i}_1  \land\xdianu \mbf{i}_2 \leq \xboxnuc  \mbf{m}],
\]
which, for the sake of convenience, applying adjunction, we equivalently rewrite as
\begin{equation}
\label{eq:alba output}
 \forall \mbf{i}_1 \forall \mbf{i}_2 \forall \mbf{m}[ \xdiain \mbf{i}_1 \land \xdiain \mbf{i}_2 \le \xboxnin \mbf{m}\Rightarrow \xdianu \mbf{i}_1  \land\xdianu \mbf{i}_2 \leq \xboxnuc  \mbf{m}]. 
\end{equation}
Let   $ ALBA(\tau(\abla p \land \abla q \vdash \abla(p \land q) ) )$ denote the quasi inequality above. The soundness of ALBA on perfect heterogeneous m-algebras and the validity of \eqref{eq:axiom C transl} on $(\mathbb{F}^\star )^+$ imply that $ ALBA(\tau(\abla p \land \abla q \vdash \abla(p \land q) ) )$  holds in $(\mathbb{F}^\star )^+$. The next step is to translate this quasi-inequality into a condition on $\mathbb{F}^\star$ expressible in its appropriate correspondence language. 

As discussed above, nominal and conominal variables  correspond to subsets of $W$. Moreover,  recall that the heterogeneous connectives $ \dboxni,$ $\ddianni$ $, \xdianu, \xboxnuc$ are interpreted  in $(\mathbb{F}^\star )^+$ as heterogeneous operations 
defined by the following assignments: for any $D \in \mathcal{P}(W)$ and $U \in \mathcal{P}\mathcal{P} (W)$ (cf.~Definition \ref{def:2sorted Kripke frame}),  
\[
\xboxnin U = (R_{\not \ni}[U^c])^c  \quad \xdiain U = R_\ni[U] \quad \xdianu U = R^{-1}_\nu [U] \quad \xboxnuc U = (R^{-1}_{\nu^c}[U^c])^c.
\]
Let $Z_1 ,Z_2,Z_3 \subseteq W$ and $\{Z_1\}, \{Z_2\}, \{Z_3\}^c$ be the interpretations of $\mbf{i}_1, \mbf{i_2}, \mbf{m}$, respectively. Then, writing $R_{\circ}[Z]$ for $R_{\circ}[\{Z\}]$ for any $\circ\in \{\ni, \not\ni, \nu, \nu^c\}$, we can  translate  \eqref{eq:alba output} as follows:
\begin{align*}
&\forall \mbf{i}_1 \forall \mbf{i}_2 \forall \mbf{m}[  \xdiain \mbf{i}_1 \land \xdiain \mbf{i}_2 \le \xboxnin \mbf{m}\Rightarrow \xdianu \mbf{i}_1  \land\xdianu \mbf{i}_2 \leq \xboxnuc  \mbf{m}]\\
= &\forall Z_1 \forall Z_2 \forall Z_3[ \xdiain \{Z_1\} \land \xdiain \{Z_2\} \le  \xboxnin\{Z_3\}^c\Rightarrow \xdianu \{Z_1\}  \land\xdianu \{Z_2\} \leq \xboxnuc \{Z_3\}^c]\\
= &\forall Z_1 \forall Z_2 \forall Z_3[ R_\ni [Z_1] \cap R_\ni[Z_2] \subseteq (R_{\not\ni}[\{Z_3\}^{cc}])^c\Rightarrow R^{-1}_\nu [Z_1]  \cap  R^{-1}_\nu[ Z_2] \subseteq( R^{-1}_{\nu^c}[ \{Z_3\}^{cc}])^c]\\
= &\forall Z_1 \forall Z_2 \forall Z_3[ R_\ni [Z_1] \cap R_\ni[Z_2] \subseteq (R_{\not\ni}[Z_3])^c\Rightarrow R^{-1}_\nu [Z_1]  \cap  R^{-1}_\nu[ Z_2] \subseteq( R^{-1}_{\nu^c}[ Z_3])^c].
%
\end{align*}
Thus, we have obtained
\[\mathbb{F}^\star \models \forall Z_1 \forall Z_2 \forall Z_3[ R_\ni [Z_1] \cap R_\ni[Z_2] \subseteq (R_{\not\ni}[Z_3])^c\Rightarrow R^{-1}_\nu [Z_1]  \cap  R^{-1}_\nu[ Z_2] \subseteq( R^{-1}_{\nu^c}[ Z_3])^c].
\]
The final step is to translate this condition into a condition on $\mathbb{F}$. Recalling the definitions of $R_{\not \ni},R_\ni,R_\nu, R_{\nu^c}$ in Definition \ref{def:two-sortedFrames}, it is easy to see that for any $Z\subseteq W$,
\[
R_\ni[Z]= Z  =  (R_{\not \ni}[Z])^c      \quad \text{ and }\quad  R^{-1}_\nu [Z] =\{w\in W\mid Z\in \nu(w)\}  =   (R^{-1}_{\nu^c}[Z])^c.
\]
Hence, we get:
\[\mathbb{F} \models \forall Z_1 \forall Z_2 \forall Z_3[ Z_1 \cap Z_2\subseteq Z_3\Rightarrow \forall x [(Z_1\in \nu (x)  \ \&\   Z_2\in \nu(x)) \Rightarrow  Z_3\in \nu(x)]],
\]
which, by uncurrying and then currying again, and suitably distributing quantifiers, is equivalent to 
\[\mathbb{F} \models \forall Z_1 \forall Z_2\forall x [(Z_1\in \nu (x)  \ \&\   Z_2\in \nu(x)) \Rightarrow \forall Z_3[Z_1 \cap Z_2\subseteq Z_3 \Rightarrow  Z_3\in \nu(x)]],
\]
which is equivalent to
\[\mathbb{F} \models \forall Z_1 \forall Z_2\forall x [(Z_1\in \nu (x)  \ \&\   Z_2\in \nu(x)) \Rightarrow Z_1 \cap Z_2 \in \nu(x)]]:
\]
Indeed, for the top-to-bottom direction, take $Z_3= Z_1 \cap Z_2$. Conversely, assume that $Z_1 \cap Z_2 \subseteq Z_3$, and that $Z_1 \in \nu(x)$ and $Z_2 \in \nu(x)$. Then, the assumption implies that  $Z_1 \cap Z_2 \in \nu(x)$. Since $\nu(x)$ is upwards-closed, $Z_1 \cap Z_2 \subseteq Z_3$ implies that  $Z_3 \in \nu (x)$. This completes the algorithmic proof of item C  of Theorem \ref{theor:correspondence-noAlba}. The remaining items can be obtained by similar arguments. In Appendix \ref{sec:ALBA runs} we collect the relevant ALBA runs and translations of their output.

Finally, the tools of unified correspondence can be used also for computing analytic rules corresponding to analytic inductive axioms in the  given two-sorted languages, so to obtain analytic calculi for some axiomatic extensions of the basic monotone modal logic and basic conditional logic as an application of the theory developed in \cite{greco2016unified}. This treatment yields the analytic calculi defined in the next section.

\section{Proper display calculi for non-normal logics}
\label{sec:calculi}
In this section we introduce proper multi-type display calculi for $\mathbf{L}_\abla$ and $\mathbf{L}_>$ and their axiomatic extensions generated by the analytic axioms in the table above. 

\noindent \emph{Languages. } \ \ The language $\mcl{L}_{DMT\abla}$ of the  calculus D.MT$\abla$ for $\mathbf{L}_\abla$ is defined as follows:
{\small{
\begin{center}
\begin{tabular}{l}
$\mathsf{S}\left\{\begin{array}{l}
A::= p \mid \xtop \mid \xbot \mid \neg A \mid A \xand A \mid \xdianu \alpha \mid \xboxnuc \alpha \\
X ::= A\mid \XTOP \mid \XBOT \mid \XNEG X \mid X \XAND X \mid X \XOR X \mid \XDIANU \Gamma \mid \XBOXNUC \Gamma \mid \XDIAIN \Gamma \mid \XBOXNIN \Gamma \\
\end{array} \right.$
 \\
 \\
$\mathsf{N}\left\{\begin{array}{l}
\alpha ::= \dboxni A \mid \ddianni A \\
\Gamma ::= \alpha \mid \DTOP \mid \DBOT \mid \DNEG \Gamma \mid \Gamma \DAND \Gamma \mid \Gamma \DOR \Gamma \mid \DBOXNI X \mid \DDIANNI X \mid \DBOXUN X \mid \DDIAUNC X \\
\end{array} \right.$
 \\
\end{tabular}
\end{center}
}}
The language $\mcl{L}_{DMT>}$ of the calculus D.MT$>$ for $\mathbf{L}_>$ is defined as follows:

{\small{
\begin{center}
\begin{tabular}{l}
$\mathsf{S}\left\{\begin{array}{l}
A::= p \mid \xtop \mid \xbot \mid \neg A \mid A \xand A \mid \alpha \mtra A \\
X ::= A\mid \XTOP \mid \XBOT \mid \XNEG X \mid X \XAND X \mid X \XOR X \mid \XDIAIN \Gamma \mid \Gamma \MTRA X \mid \Gamma \MTAND X \mid \XRHDNIN \Gamma\\
\end{array} \right.$
 \\
 \\
$\mathsf{N}\left\{\begin{array}{l}
\alpha ::= \dboxni A \mid \drhdnni A \mid \alpha \dand \alpha \\
\Gamma ::= \alpha \mid \DTOP \mid \DBOT \mid \DNEG \Gamma \mid \Gamma \DAND \Gamma \mid \Gamma \DOR \Gamma \mid \DBOXNI X \mid \DRHDNNI X \mid X \MTBRA X\\
\end{array} \right.$
 \\
\end{tabular}
\end{center}
}}

\noindent\emph{Multi-type display calculi.}\ \ In what follows, we use $X, Y, W, Z$ as structural $\mathsf{S}$-variables, and $\Gamma, \Delta, \Sigma, \Pi$ as structural $\mathsf{N}$-variables. 

\noindent {\bf Propositional base.}\ \ The calculi D.MT$\abla$ and D.MT$>$ share the rules listed below.
\begin{itemize}
\item Identity and Cut:
\end{itemize}
{\fns
\begin{center}
\begin{tabular}{ccc}
\AXC{\rule[0mm]{0mm}{0.21cm}}
\LL{\scs $Id_\mathsf{S}$}
\UI$p \fCenter p$
\DP
 & 
\AX $X \fCenter A$
\AX $A \fCenter Y\rule[0mm]{0mm}{0.25cm}$
\RL{\scs $Cut_\mathsf{S}$}
\BI $X \fCenter Y$
\DP
 & 
\AX$\Gamma \fCenter \alpha$
\AX$\alpha \fCenter \Delta\rule[0mm]{0mm}{0.25cm}$
\RL{\scs $Cut_\mathsf{N}$}
\BI$\Gamma \fCenter \Delta$
\DP
\end{tabular}
\end{center}
}
\begin{itemize}
\item Pure $\mathsf{S}$-type display rules:
\end{itemize}
{\fns
\begin{center}
\begin{tabular}{rlrl}
\mc{4}{c}{
\AXC{\ }
\LL{\scs $\bot$}
\UI$\xbot \fCenter \XBOT$
\DP
 \ \ 
\AXC{\ }
\RL{\scs $\top$}
\UI$\XTOP \fCenter \xtop$
\DP}
\\

 & & & \\

\AX $\XNEG X \fCenter Y$
\LeftLabel{\scriptsize $gal_\mathsf{S}$}
\doubleLine
\UI$\XNEG Y \fCenter X$
\DisplayProof
 & 
\AX$X \fCenter \XNEG Y$
\RightLabel{\scriptsize $gal_\mathsf{S}$}
\doubleLine
\UI$Y \fCenter \XNEG X$
\DisplayProof
 & 
\AX$X \XAND Y \fCenter Z$
\LeftLabel{\scriptsize $res_\mathsf{S}$}
\doubleLine
\UI $Y \fCenter \XNEG X \XOR Z$
\DP
 & 
\AX$X \fCenter Y \XOR Z $
\RightLabel{\scriptsize $res_\mathsf{S}$}
\doubleLine
\UI$\XNEG Y \XAND X \fCenter Z$
\DP
 \\
\end{tabular}
\end{center}
}
\begin{itemize}
\item Pure $\mathsf{N}$-type display rules:
\end{itemize}
{\fns
\begin{center}
\begin{tabular}{rlrl}
\AX$\DNEG \Gamma \fCenter \Delta$
\LeftLabel{\scriptsize $gal_\mathsf{N}$}
\doubleLine
\UI$\DNEG \Delta \fCenter \Gamma$
\DP
 & 
\AX$\Gamma \fCenter \DNEG \Delta$
\RL{\scriptsize $gal_\mathsf{N}$}
\doubleLine
\UI$\Delta \fCenter \DNEG \Gamma$
\DP
\ \ & \ \ 
\AX$\Gamma \DAND \Delta \fCenter \Sigma$
\LeftLabel{\scriptsize $res_\mathsf{N}$}
\doubleLine
\UI$\Delta \fCenter \DNEG \Gamma \DOR \Sigma$
\DP
 & 
\AX$\Gamma \fCenter \Delta \DOR \Sigma$
\RightLabel{\scriptsize $res_\mathsf{N}$}
\doubleLine
\UI$\DNEG \Delta \DAND \Gamma \fCenter \Sigma$
\DP
\end{tabular}
\end{center}
}
\begin{itemize}
\item Pure $\mathsf{S}$-type structural rules: 
\end{itemize}
{\fns
\begin{center}
\begin{tabular}{crl}
\AX$X \fCenter Y $
\LL{\scs $cont_\mathsf{S}$}
\doubleLine
\UI$\XNEG Y \fCenter \XNEG X$
\DP
 & 
\AX$X \fCenter Y$
\LL{\scs $\XTOP$}
\doubleLine
\UI$X \XAND \XTOP \fCenter Y$
\DP
 & 
\AX$X \fCenter Y$
\RL{\scs $\XBOT$}
\doubleLine
\UI$X \fCenter Y \XOR \XBOT$
\DP
 \\
\end{tabular}
\end{center}

\begin{center}
\begin{tabular}{rlrl}
\AX$X \fCenter Y$
\LL{\scs $W_\mathsf{S}$}
\UI$X \XAND Z \fCenter Y$
\DP
 & 
\AX$X \fCenter Y$
\RL{\scs $W_\mathsf{S}$}
\UI$X \fCenter Y \XOR Z$
\DP
 & 
\AX$X \XAND X \fCenter Y$
\LL{\scs $C_\mathsf{S}$}
\UI$X \fCenter Y$
\DP
 & 
\AX$X \fCenter Y \XOR Y$
\RL{\scs $C_\mathsf{S}$}
\UI$X \fCenter Y$
\DP
 \\

 & & & \\

\AX$X \XAND Y \fCenter Z$
\LL{\scs $E_\mathsf{S}$}
\UI$Y \XAND X \fCenter Y$
\DP
 & 
\AX$X \fCenter Y \XOR Z$
\RL{\scs $E_\mathsf{S}$}
\UI$X \fCenter Z \XOR Y$
\DP
 & 
\AX$X \XAND (Y \XAND Z) \fCenter W$
\LL{\scs $A_\mathsf{S}$}
\UI$(X \XAND Y) \XAND Z \fCenter W$
\DP
 & 
\AX$W \fCenter X \XAND (Y \XAND Z)$
\RL{\scs $A_\mathsf{S}$}
\UI$W \fCenter (X \XAND Y) \XAND Z$
\DP
 \\
\end{tabular}
\end{center}
 }

\begin{itemize}
\item Pure $\mathsf{N}$-type structural rules: 
\end{itemize}
{\fns
\begin{center}
\begin{tabular}{crl}
\AX$\Gamma \fCenter \Delta$
\LL{\scs $cont_\mathsf{N}$}
\doubleLine
\UI$\DNEG \Delta \fCenter \DNEG \Gamma$
\DP
 & 
\AX $\Gamma \fCenter \Delta$
\LL{\scs $\DTOP$}
\doubleLine
\UI $\Gamma \DAND \DTOP \fCenter \Delta$
\DisplayProof
 & 
\AX$\Gamma \fCenter \Delta $
\RL{\scs $\DBOT$}
\doubleLine
\UI$\Gamma \fCenter \Delta \DOR \DBOT$
\DP
 \\
\end{tabular}
\end{center}

\begin{center}
\begin{tabular}{rlrl}
\AX$\Gamma \fCenter \Delta$
\LL{\scs $W_\mathsf{N}$}
\UI$\Gamma \DAND \Pi \fCenter \Delta$
\DP
 & 
\AX$\Gamma \fCenter \Delta$
\RL{\scs $W_\mathsf{N}$}
\UI$\Gamma \fCenter \Delta \DOR \Pi$
\DP
 & 
\AX$\Gamma \DAND \Gamma \fCenter \Delta$
\LL{\scs $C_\mathsf{N}$}
\UI$\Gamma \fCenter \Delta$
\DP
 & 
\AX$\Gamma \fCenter \Delta \DOR \Delta$
\RL{\scs $C_\mathsf{N}$}
\UI$\Gamma \fCenter \Delta$
\DP
 \\

 & & & \\

\AX$\Gamma \DAND \Delta \fCenter \Pi$
\LL{\scs $E_\mathsf{N}$}
\UI$Y \DAND \Gamma \fCenter \Delta$
\DP
 & 
\AX$\Gamma \fCenter \Delta \DOR \Pi$
\RL{\scs $E_\mathsf{N}$}
\UI$\Gamma \fCenter \Pi \DOR \Delta$
\DP
 & 
\AX$\Gamma \DAND (\Delta \DAND \Pi) \fCenter \Sigma$
\LL{\scs $A_\mathsf{N}$}
\UI$(\Gamma \DAND \Delta) \DAND \Pi \fCenter \Sigma$
\DP
 & 
\AX$\Sigma \fCenter \Gamma \DAND (\Delta \DAND \Pi)$
\RL{\scs $A_\mathsf{N}$}
\UI$\Sigma \fCenter (\Gamma \DAND \Delta) \DAND \Pi$
\DP
 \\
\end{tabular}
\end{center}
 }

%

\begin{itemize}
\item Pure $\mathsf{S}$-type logical rules:
\end{itemize}
{\fns
\begin{center}
\begin{tabular}{rlrl}
\AX$\XNEG A \fCenter X$
\LL{\scs $\xneg$}
\UI$\xneg A \fCenter X$
\DP
 & 
\AX$X \fCenter \XNEG A$
\RL{\scs $\xneg$}
\UI$X \fCenter \xneg A$
\DP
 & 
\AX$A \XAND B \fCenter X$
\LL{\scs $\xand$}
\UI$A \xand B \fCenter X$
\DP
 & 
\AX$X \fCenter A$
\AX$Y \fCenter B$
\RL{\scs $\xand$}
\BI$X \XAND Y \fCenter A \xand B$
\DP
 \\
\end{tabular}
\end{center}
}

\noindent {\bf Monotonic modal logic.} \ \ D.MT$\abla$ also includes the rules listed below.
\begin{itemize}
\item Multi-type display rules:
\end{itemize}
{\fns
\begin{center}
\begin{tabular}{ccc}
\AX$\XDIANU \Gamma \fCenter X$
\doubleLine
\LL{\scs $\XDIANU\DBOXUN$}
\UI$\Gamma \fCenter \DBOXUN X$
\DP
 & 
\AX$\DDIAUNC X \fCenter \Gamma$
\doubleLine
\LL{\scs $\DDIAUNC\XBOXNUC$}
\UI$X \fCenter \XBOXNUC \Gamma$
\DP
 & 
\AX$\XDIAIN \Gamma \fCenter X$
\doubleLine
\LL{\scs $\XDIAIN\DBOXNI$}
\UI$\Gamma \fCenter \DBOXNI X$
\DP
 \\
 & & \\
\mc{3}{c}{
\AX$\XDIAIN \Gamma \fCenter X$
\doubleLine
\LL{\scs $\XDIAIN\DBOXNI$}
\UI$\Gamma \fCenter \DBOXNI X$
\DP
\ \ \ \ \ 
\AX$\DDIANNI X \fCenter \Gamma$
\doubleLine
\LL{\scs $\DDIANNI\XBOXNIN$}
\UI$X \fCenter \XBOXNIN \Gamma$
\DP}
 \\
\end{tabular}
\end{center}
}
\begin{itemize}
\item Logical rules for multi-type connectives:
\end{itemize}
{\fns
\begin{center}
\begin{tabular}{rlrlrl}
\AX$\XDIANU \alpha \fCenter X$
\LL{\scs $\xdianu$}
\UI$\xdianu \alpha \fCenter X$
\DP
 & 
\AX$\Gamma \fCenter \alpha$
\RL{\scs $\xdianu$}
\UI$\XDIANU \Gamma \fCenter \xdianu \alpha$
\DP
 & 
\AX$\alpha \fCenter \Gamma$
\LL{\scs $\xboxnuc$}
\UI$\xboxnuc \alpha \fCenter \XBOXNUC \Gamma$
\DP
 & 
\AX$X \fCenter \XBOXNUC \alpha$
\RL{\scs $\xboxnuc$}
\UI$X \fCenter \xboxnuc \alpha$
\DP
 \\

 & & & \\

\AX$\DDIANNI A \fCenter \Gamma$
\LL{\scs $\ddianni$}
\UI$\ddianni A \fCenter \Gamma$
\DP
 & 
\AX$X \fCenter A$
\RL{\scs $\ddianni$}
\UI$\DDIANNI X \fCenter \ddianni A$
\DP
 & 
\AX$A \fCenter X$
\LL{\scs $\dboxni$}
\UI$\dboxni A \fCenter \DBOXNI X$
\DP
 & 
\AX$\Gamma \fCenter \DBOXNI A$
\RL{\scs $\dboxni$}
\UI$\Gamma \fCenter \dboxni A$
\DP
 \\
\end{tabular}
\end{center}
}

\noindent {\bf Conditional logic.} \ \ D.MT$>$ includes left and right logical rules for $\dboxni$, the display postulates $\XDIAIN\DBOXNI$ and the rules listed below.
\begin{itemize}
\item Multi-type display rules:
\end{itemize}
{\fns
\begin{center}
\begin{tabular}{ccc}
\AX$X \fCenter \Gamma \MTRA Y$
\doubleLine
\LL{\scs $\MTAND\MTRA$}
\UI$\Gamma \MTAND X \fCenter Y$
\doubleLine
\DP
\ \ & \ \ 
\AX$\Gamma \fCenter X \MTBRA Y$
\doubleLine
\RL{\scs $\MTBRA\MTRA$}
\UI$X \fCenter \Gamma \MTRA Y$
\DP
\ \ & \ \ 
\AX$X \fCenter \XRHDNIN \Gamma$
\doubleLine
\RL{\scs $\XRHDNIN\DRHDNNI$}
\UI$\Gamma \fCenter \DRHDNNI X$
\DP
 \\
\end{tabular}
\end{center}
}
\begin{itemize}
\item Logical rules for multi-type connectives and pure $\mathsf{G}$-type logical rules:
\end{itemize}
{\fns
\begin{center}
\begin{tabular}{rlrl}
\AX$\Gamma \fCenter \alpha$
\AX$A \fCenter X$
\LL{\scs $\mtra$}
\BI$\alpha \mtra A \fCenter \Gamma \MTRA X$
\DP
 & 
\AX$X \fCenter \alpha \MTRA A$
\RL{\scs $\mtra$}
\UI$X \fCenter \alpha \mtra A$
\DP
 & 
\AX$X \fCenter A$
\LL{\scs $\drhdnni$}
\UI$\drhdnni A \fCenter \DRHDNNI X$
\DP
 & 
\AX$\Gamma \fCenter \DRHDNNI A$
\RL{\scs $\drhdnni$}
\UI$\Gamma \fCenter \drhdnni A$
\DP
 \\
 & & & \\
\mc{4}{c}{\AX$\alpha \DAND \beta \fCenter \Gamma$
\LL{\scs $\dand$}
\UI$\alpha \dand \beta \fCenter \Gamma$
\DP
 \ \ 
\AX$\Gamma \fCenter \alpha$
\AX$\Delta \fCenter \beta$
\RL{\scs $\dand$}
\BI$\Gamma \DAND \Delta \fCenter \alpha \dand \beta$
\DP}
\\
\end{tabular}
\end{center}
}

\noindent {\bf Axiomatic extensions.} \ \ Each rule is labelled with the name of its corresponding axiom. 
{\fns
\begin{center}
\begin{tabular}{c}
\AX$ \DDIANNI \XTOP \fCenter \Gamma$
\LL{\scs N}
\UI$ \XTOP \fCenter \XBOXNUC \Gamma$
\DP
 \ \ \ \ 
\AX$\Delta \fCenter \DRHDNNI \XDIAIN \Gamma$
\AX$\XDIAIN \Gamma \fCenter X$
\LL{\scs ID}
\BI$\XTOP \fCenter (\Gamma\DAND \Delta)\MTRA X$
\DP
 \ \ \ \ 
\AX$ \DDIANNI (\XDIAIN\Gamma\XAND\XDIAIN\Delta) \fCenter \Theta$
\LL{\scs C}
\UI$ \XDIANU\Gamma \XAND\XDIANU\Delta \fCenter \XBOXNUC \Theta $
\DP
 \\

 \\

\AX$\Gamma \fCenter \DBOXNI \XNEG\XDIAIN \Delta$
\LL{\scs D}
\UI$ \XDIANU \Delta \fCenter \XNEG\XDIANU \Gamma$
\DP
 \ \ \ \ 
\AX$\Gamma \fCenter \DBOXNI\XBOT$
\LL{\scs P}
\UI$\XTOP \fCenter \XNEG\XDIANU \Gamma$
\DP
 \ \ \ \ 
\AXC{$\Gamma \vdash \DBOXNI \XRHDNIN \Delta \quad\quad X \vdash \XRHDNIN \Delta \quad\quad Y \vdash Z$}
\LL{\scs CS}
\UIC{$X\XAND Y \vdash (\Gamma\DAND\Delta)\MTRA Z$}
\DP
 \\

 \\

\AXC{$ \Pi\vdash \DRHDNNI \XDIAIN \Gamma  \quad  \Pi\vdash \DRHDNNI \XDIAIN \Theta  \quad  \Delta\vdash \DRHDNNI \XDIAIN \Gamma  \quad  \Delta\vdash \DRHDNNI \XDIAIN \Theta \quad Y\vdash X$}
\LL{\scs CEM}
\UIC{$ \XTOP\vdash ((\Gamma\DAND \Delta)\MTRA X)\XOR((\Theta\DAND \Pi)\MTRA \XNEG Y)$}
\DP
\ \ \ \ \ \ \ \ \ 
\AX$\Gamma \fCenter \DBOXNI X$
\LL{\scs T}
\UI$\XDIANU \Gamma \fCenter X$
\DP
 \\
 
 \\

\AXC{$ \Gamma \vdash  \DBOXNI \XRHDNIN \Delta\quad \Gamma \vdash   \DBOXNI Y \quad \Theta  \vdash  \DBOXNI \XRHDNIN \Pi \quad  \Theta \vdash   \DBOXNI X$}
\LL{\scs CN}
\UIC{$ \XTOP\vdash ((\Gamma\DAND \Delta)\MTRA X)\XOR((\Theta\DAND \Pi)\MTRA Y)$}
\DP
\end{tabular}
\end{center}
}


\section{Properties}
\label{sec:properties}

 The calculi introduced above are proper (cf.~\cite{wansing2013displaying,greco2016unified}), and hence the  general theory of proper multi-type display calculi guarantees that they enjoy {\em cut elimination} and {\em subformula property} \cite{TrendsXIII}. 

Let $H_m$ (resp. $H_c$) be the class of all perfect heterogeneous m-algebras (resp. perfect heterogeneous c-algebras). Given a set of analytic sequents $R$, the extension of D.MT$\nabla$ (resp. D.MT$>$) with inference rules obtained by running ALBA on $R$ is denoted by D.MT$\nabla R$ (resp. D.MT$>R$). The subclass of $H_m$ (resp. $H_c$) defined by $R$ is denoted by $H_m(R)$ (resp. $H_c(R)$).

\subsection{Soundness}
\label{ssec:soundness}
To show the soundness of the rules of D.MT$\nabla R$ (resp. $D.MT>R$) w.r.t. $H_m(R)$ (resp. $H_c(R))$, it suffices to show that the interpretation of each rule in D.MT$\nabla R$  (resp. $D.MT>R$) is valid in $H_m(R)$ (resp. $H_c(R)$). The soundness of the rules in D.MT$\nabla$ and D.MT$>$ follows from the definitions of $H_m$ and $H_c$, respectively. And the soundness of the rules from $R$ follows from the soundness of ALBA rules on members of $H_m$ (resp. $H_c$), and the ALBA runs reported in the appendix. Specifically, in what follows, for any perfect m-algebra (resp. c-algebra) $\mathbb{H}: = (\mathbb{A}, \mathbb{B}, ...)$,  let $x$  range over $\mathbb{A}$ and $\gamma$, $\delta$, $\theta$ range over $\mathbb{B}$. Then  the rules on the left-hand side of the squiggly arrows below are interpreted as the quasi-inequalities on the
right-hand side:

  \begin{center}
\begin{tabular}{rcl}
\AxiomC{$\XDIAIN \Gamma \fCenter X$}
\doubleLine
\UnaryInfC{$\Gamma \fCenter \DBOXNI X$}
\DisplayProof
&$\quad\rightsquigarrow\quad$&
$\forall \gamma \forall x [\xdiain \gamma \le x \Leftrightarrow \gamma \le \dboxni x]$
\end{tabular}
\end{center}

  \begin{center}
\begin{tabular}{rcl}
\AX$ \DDIANNI (\XDIAIN\Gamma\XAND\XDIAIN\Delta) \fCenter \Theta$
\UI$ \XDIANU\Gamma \XAND\XDIANU\Delta \fCenter \XBOXNUC \Theta $
\DP
&$\quad\rightsquigarrow\quad$&
$\forall \gamma \forall \delta  \forall \theta [ \ddianni (\xdiain \gamma \xand \xdiain \delta) \le  \theta \Rightarrow \xdianu \gamma \xand \xdianu \delta  \le  \xboxnuc  \theta]$
\end{tabular}
\end{center}

  \begin{center}
\begin{tabular}{rcl}
\AX$\Gamma \fCenter \DBOXNI\XBOT$
\UI$\XTOP \fCenter \XNEG\XDIANU \Gamma$
\DP
&$\quad\rightsquigarrow\quad$&
$\forall \gamma [ \gamma \le \dboxni \bot \Rightarrow \top \le \xneg \xdianu \gamma ]$
\end{tabular}
\end{center}

The validity of $\forall \gamma \forall x [\xdiain \gamma \le x \Leftrightarrow \gamma \le \dboxni x]$ follows from the fact that $\xdiain $ and $ \dboxni$ form a residuation pair in $\mathbb{H}$. The validity of the quasi-inequalities corresponding to axioms C and P in $H_m(\{C\})$ and $H_m(\{P\})$ respectively follows from the validity-preserving ALBA runs reported in the appendix. We report below on the validity-preserving ALBA run for C. 
{\small{
\begin{flushleft}
\begin{tabular}{clr}
\mc{3}{l}{C.\ \, $\mathbb{H}\models \nabla p \land \nabla q \to \nabla ( p \land q)  \ \rightsquigarrow\ \langle \nu \rangle [\ni] p \land \langle \nu \rangle [\ni] q \subseteq [\nu^c] \langle \not \ni \rangle (p \land q)$} \\
\hline
    & \mc{2}{l}{$\mathbb{H} \models \langle \nu \rangle [\ni] p \land \langle \nu \rangle [\ni] q \subseteq [\nu^c] \langle \not \ni \rangle (p \land q)$} \\
iff & \mc{2}{l}{$\mathbb{H} \models\forall  \gamma \forall \delta \forall \theta \forall p q[ \gamma\subseteq [\ni] p \ \& \   \delta\subseteq [\ni] q \ \&\  \langle \not \ni \rangle(p \land q)\subseteq  \theta\Rightarrow \langle \nu \rangle  \gamma \land \langle \nu \rangle  \delta\subseteq [\nu^c]\theta]$}{first approx.} \\
iff & \mc{2}{l}{$\mathbb{H} \models\forall  \gamma \forall \delta \forall \theta \forall p q[ \langle \in \rangle \gamma\subseteq p \ \& \ \langle \in \rangle   \delta\subseteq  q \ \&\  \langle \not \ni \rangle(p \land q)\subseteq  \theta \Rightarrow \langle \nu \rangle  \gamma \land \langle \nu \rangle  \delta\subseteq [\nu^c] \theta]$}{Residuation} \\
iff & $\mathbb{H} \models\forall  \gamma \forall \delta \forall \theta[   \langle \not \ni \rangle(\langle \in \rangle \gamma \land \langle \in \rangle   \delta)\subseteq  \theta\Rightarrow \langle \nu \rangle  \gamma \land \langle \nu \rangle  \delta\subseteq [\nu^c] \theta]$ & $(\star)$ Ackermann \\
\end{tabular}
\end{flushleft}
}}
  
\subsection{Completeness}
\label{ssec:completeness}
As discussed  above, the algorithmic correspondence perspective on the theory of analytic calculi (here in their incarnation as  ``proper display calculi'') allows for a uniform justification of the soundness of analytic rules in terms of the soundness of the algorithm ALBA used to generate them. These benefits extend also to the uniform justification of the  {\em completeness} of proper display calculi w.r.t.~the logics they are intended to capture. Specifically, in \cite{chen2020}, 
an effective procedure is introduced for generating cut free derivations of the translations of each rule and analytic inductive axiom (of any normal lattice expansion signature) in the corresponding proper display calculus. Below, we illustrate this effective procedure by applying it to the analytic axioms of the present setting.

\begin{itemize}
\item[N.] \ $ \abla \top \ \rightsquigarrow \ \xboxnuc\ddianni \top$ \ \ \ \ \ \ \ \ \ \ \ \ \ \ \ \ P. \ $\neg \abla \bot \ \rightsquigarrow\ \neg \xdianu \dboxni \bot$ \ \ \ \ T. \ $\abla A \to A \ \rightsquigarrow\ \xdianu \dboxni A \vdash A$
\end{itemize}
{\fns
\begin{center}
\begin{tabular}{ccc}
\AX$\XTOP \fCenter \xtop$
\UI$\DDIANI \XTOP \fCenter \ddiani \xtop$
\LL{\scs N}
\UI$\XTOP \fCenter \XBOXNUC \ddiani \xtop$
\DP
\ \ & \ \ 
\AX$\xbot \fCenter \XBOT$
\UI$\dboxni \xbot \fCenter \DBOXNI \XBOT$
\LL{\scs P}
\UI$\XTOP \fCenter \XNEG \dboxni \xbot$
\DP
\ \ & \ \ 
\AX$A \fCenter A$
\UI$\dboxni A \fCenter \DBOXNI A$
\LL{\scs T}
\UI$\XDIANU \dboxni A \fCenter A$
\DP
 \\
\end{tabular}
\end{center}
}
\begin{itemize}
\item[ID.] $A > A \ \rightsquigarrow\ (\dboxni A \wedge\drhdnni A) \mtra A$
\end{itemize}
{\fns
\begin{center}
\begin{tabular}{c}
\AX$A \fCenter A$
\UI$\drhdnni A \fCenter \DRHDNNI A$
\UI$A \fCenter \XRHDNIN \drhdnni A$
\UI$\dboxni A \fCenter \DBOXNI \XRHDNIN \drhdnni A$
\UI$\XDIAIN \dboxni A \fCenter \XRHDNIN \drhdnni A$
\UI$\drhdnni A \fCenter \DRHDNNI \XDIAIN \dboxni A$
\AX$A \fCenter A$
\UI$\dboxni A \fCenter \DBOXNI A$
\UI$\XDIAIN \dboxni A \fCenter A$
\LL{\scs ID}
\BI$\XTOP \fCenter (\DBOXNI A \DAND \DRHDNNI A) \MTRA A$
\DP
\end{tabular}
\end{center}
}

\begin{itemize}
\item[CS.] $(A \wedge B) \to (A > B) \ \rightsquigarrow\ (A \wedge B) \vdash (\dboxni A \dand \drhdnni A) \mtra B$
\end{itemize}

{\fns
\begin{center}
\begin{tabular}{c}
\AX$A \fCenter A$
\UI$\drhdnni A \fCenter \DRHDNNI A$
\UI$A \fCenter \XRHDNIN \drhdnni A$
\UI$\dboxni A \fCenter \DBOXNI \XRHDNIN \drhdnni A$
\AX$A \fCenter A$
\UI$\drhdnni A \fCenter \DRHDNNI A$
\UI$A \fCenter \XRHDNIN \drhdnni A$
\AX$B \fCenter B$
\LL{\scs CS}
\TI$A \XAND B \fCenter (\DBOXNI A \DAND \DRHDNNI A) \MTRA B$
\DP 
 \\
\end{tabular}
\end{center}
}

\begin{itemize}
\item[CEM.] $(A > B) \vee (A > \neg B) \ \rightsquigarrow\ (\dboxni A \dand \drhdnni A)\mtra B \vee (\dboxni A \dand \drhdnni A) \mtra \neg B$
\end{itemize}
{\fns
\begin{center}
\begin{center}
$\mkern-40mu$
\AxiomC{$A\fCenter A$}
\UnaryInfC{$\dboxni A \fCenter  \DBOXNI A$}
\UnaryInfC{$\XDIAIN \dboxni A \fCenter  A$}
\UnaryInfC{$\drhdnni A \fCenter \DRHDNNI \XDIAIN \dboxni A $}
\AxiomC{$A\fCenter A$}
\UnaryInfC{$\dboxni A \fCenter  \DBOXNI A$}
\UnaryInfC{$ \XDIAIN \dboxni A \fCenter  A$}
\UnaryInfC{$\drhdnni A \fCenter \DRHDNNI \XDIAIN \dboxni A$}
\AxiomC{$A\fCenter A$}
\UnaryInfC{$\dboxni A \fCenter  \DBOXNI A$}
\UnaryInfC{$ \XDIAIN \dboxni A \fCenter  A$}
\UnaryInfC{$\drhdnni A \fCenter \DRHDNNI \XDIAIN \dboxni A$}
\AxiomC{$A\fCenter A$}
\UnaryInfC{$\dboxni A \fCenter  \DBOXNI A$}
\UnaryInfC{$ \XDIAIN \dboxni A \fCenter  A$}
\UnaryInfC{$\drhdnni A \fCenter \DRHDNNI \XDIAIN \dboxni A$}
\AxiomC{$B\fCenter B$}
\LL{\scs CEM}
\QuinaryInfC{$\XTOP \fCenter (\dboxni A \DAND \drhdnni A) \MTRA B \XOR (\dboxni A \DAND \drhdnni A) \MTRA \XNEG B$}
\DisplayProof
\end{center}
\end{center}
}

\begin{itemize}
\item[C.] $\abla A \land \abla B \to \abla(A \land B) \rightsquigarrow \xdianu \dboxni A \land\xdianu \dboxni B \vdash \xboxnuc\ddianni (A \land B)$ 
\item[D.] $\abla A \to \neg \abla \neg A \rightsquigarrow \xdianu \dboxni A \vdash \neg \xdianu \dboxni \neg A$
\end{itemize}
{\fns
\begin{center}
\begin{tabular}{cc}

\AX$A \fCenter A$
\UI$\dboxni A \fCenter \dboxni A$
\UI$\XDIAIN \dboxni A \fCenter A$
\AX$B \fCenter B$
\UI$\dboxni B \fCenter \dboxni B$
\UI$\XDIAIN \dboxni B \fCenter B$
\BI$\XDIAIN \dboxni A \XAND \XDIAIN \dboxni B \fCenter A \xand B$
\UI$\DDIANNI (\XDIAIN \dboxni A \XAND \XDIAIN \dboxni B) \fCenter \ddianni (A \xand B)$
\LL{\scs C}
\UI$\XDIANU \dboxni A \XAND \XDIANU \dboxni B \fCenter \XBOXNUC \ddianni (A \xand B)$
\DP
 & 
\AX$A \fCenter A$
\UI$\dboxni A \fCenter \DBOXNI A$
\UI$\XDIAIN \dboxni A \fCenter A$
\UI$\xneg A \fCenter \XNEG \XDIAIN \dboxni A$
\UI$\dboxni \xneg A \fCenter \DBOXNI \XNEG \XDIAIN \dboxni A$
\LL{\scs D}
\UI$\XDIANU \dboxni A \fCenter \XNEG \XDIANU \dboxni \xneg A$
\DP

 \\
\end{tabular}
\end{center}
}

\begin{itemize}
\item[CN.] $(A > B) \vee (B > A) \ \rightsquigarrow\ (\dboxni A \dand \drhdnni A)\mtra B \vee (\dboxni B \dand \drhdnni B) \mtra A$
\end{itemize}
{\fns
\begin{center}
\begin{center}
$\mkern-40mu$
\AxiomC{$A\fCenter A$}
\UnaryInfC{$ \drhdnni A  \vdash   \DRHDNNI A $}
\UnaryInfC{$ A  \vdash   \XRHDNIN \drhdnni A $}
\UnaryInfC{$\dboxni A  \vdash  \DBOXNI \XRHDNIN \drhdnni A $}
\AxiomC{$A\fCenter A$}
\UnaryInfC{$\dboxni A  \vdash   \DBOXNI A $}
\AxiomC{$B\fCenter B$}
\UnaryInfC{$ \drhdnni B  \vdash   \DRHDNNI B $}
\UnaryInfC{$ B  \vdash   \XRHDNIN \drhdnni B $}
\UnaryInfC{$ \dboxni B  \vdash  \DBOXNI \XRHDNIN  \drhdnni B$}
\AxiomC{$B\fCenter B$}
\UnaryInfC{$\dboxni B \vdash   \DBOXNI B$}
\LL{\scs CN}
\QuaternaryInfC{$\XTOP\vdash ((\dboxni A \DAND \drhdnni A)\MTRA B)\XOR((\dboxni B\DAND  \drhdnni B)\MTRA A)$}
\DisplayProof
\end{center}
\end{center}
}

The (translations of the) rules M, RCEA and RCK$_n$ are derivable as follows.

\begin{itemize}
\item[M.] 
\AX$A \fCenter B$
\UI$\nabla A \fCenter \nabla B$
\DP
$\rightsquigarrow$
\AX$A \fCenter B$
\UI$\xdianu \dboxni A \fCenter \xdianu \dboxni B$
\DP
\end{itemize}

{\fns
\begin{center}
\begin{tabular}{c}
\AX$A \fCenter B$
\UI$  \dboxni A \fCenter  \DBOXNI B$
\UI$  \dboxni A \fCenter  \dboxni B$
\UI$ \XDIANU \dboxni A \fCenter  \xdianu \dboxni B$
\UI$ \xdianu \dboxni A \fCenter  \xdianu \dboxni B$
\DP 
 \\
\end{tabular}
\end{center}
}

\begin{itemize}
\item[RCEA.] 
\AXC{$A \leftrightarrow B$}
\UIC{$(A > C)\leftrightarrow (B > C)$}
\DP
$\rightsquigarrow$
\AX$A \fCenter B$
\AX$B \fCenter A$
\BI$(\dboxni A \dand \drhdnni A)\mtra C \fCenter (\dboxni B \dand \drhdnni B)\mtra C$
\DP
\end{itemize}

{\fns
\begin{center}
\begin{tabular}{c}
\AX$B \fCenter A$
\UI$\dboxni B \fCenter \DBOXNI A$
\UI$\dboxni B \fCenter \dboxni A$
\AX$A \fCenter B$
\UI$\drhdnni B \fCenter \DRHDNNI A$
\UI$\drhdnni B \fCenter \drhdnni A$
\BI$\dboxni B \DAND \drhdnni B \fCenter \dboxni \varphi \dand \drhdnni A$
\UI$\dboxni B \dand \drhdnni B \fCenter \dboxni \varphi \dand \drhdnni A$
\AX$C \fCenter C$
\BI$ (\dboxni A \dand \drhdnni A) \mtra C \fCenter (\dboxni B \dand \drhdnni B) \MTRA C$
\UI$ (\dboxni A \dand \drhdnni A) \mtra C \fCenter (\dboxni B \dand \drhdnni B) \mtra C$
\DP 
 \\
\end{tabular}
\end{center}
}

\begin{itemize}
\item [$RCK_n.$] 
\AXC{$A_1 \land \ldots \land A_n \to B$}
\UIC{$(C > A_1) \land \ldots \land (C > A_n) \to (C > B)$}
\DP   \\
$\phantom{AAAAAAAAA} \rightsquigarrow$   
\AX$A_1 \xand \ldots \xand A_n \fCenter B$
\UI$(\dboxni C \dand \drhdnni C) \mtra A_1 \xand \ldots \xand (\dboxni \chi \dand \drhdnni C) \mtra A_n \fCenter (\dboxni C \dand \drhdnni C) \mtra B$
\DP
\end{itemize}

\noindent To show that the translation of $RCK_n$ is derivable, let us preliminarily show that $(\dboxni C \dand \drhdnni C) \MTAND (\dboxni C \dand \drhdnni C) \mtra A_1 \xand (\dboxni C \dand \drhdnni C) \mtra A_2 \fCenter A_1 \xand A_2$ is derivable.

{\scriptsize
\begin{center}
\begin{tabular}{c}
$\mkern-140mu$
\AX$C \fCenter C$
\UI$\dboxni C \fCenter \DBOXNI C$
\UI$\dboxni C \fCenter \dboxni C$
\AX$C \fCenter C$
\UI$\drhdnni C \fCenter \DRHDNNI C$
\UI$\drhdnni C \fCenter \drhdnni C$
\BI$ \dboxni C \DAND \drhdnni C \fCenter \dboxni C \dand \drhdnni C$
\UI$ \dboxni C \dand \drhdnni C \fCenter \dboxni C \dand \drhdnni C$
\AX$A_1 \fCenter A_1$
\BI$(\dboxni C \dand \drhdnni C) \mtra A_1 \fCenter (\dboxni C \dand \drhdnni C) \MTRA A_1$
\LL{\scs $W_S$}
\UIC{$(\dboxni C \dand \drhdnni C) \mtra A_1 \XAND (\dboxni C \dand \drhdnni C) \mtra A_2 \fCenter (\dboxni C \dand \drhdnni C) \MTRA A_1$}
\UIC{$(\dboxni C \dand \drhdnni C) \mtra A_1 \xand (\dboxni C \dand \drhdnni C) \mtra A_2 \fCenter (\dboxni C \dand \drhdnni C) \MTRA A_1$}
\UIC{$(\dboxni C \dand \drhdnni C) \MTAND (\dboxni C \dand \drhdnni C) \mtra A_1 \xand (\dboxni C \dand \drhdnni C) \mtra A_2 \fCenter A_1$}
\AX$C \fCenter C$
\UI$\dboxni C \fCenter \DBOXNI C$
\UI$\dboxni C \fCenter \dboxni C$
\AX$C \fCenter C$
\UI$\drhdnni C \fCenter \DRHDNNI C$
\UI$\drhdnni C \fCenter \drhdnni C$
\BI$ \dboxni C \DAND \drhdnni C \fCenter \dboxni C \dand \drhdnni C$
\UI$ \dboxni C \dand \drhdnni C \fCenter \dboxni C \dand \drhdnni C$
\AX$A_2 \fCenter A_2$
\BI$(\dboxni C \dand \drhdnni C) \mtra A_1 \fCenter (\dboxni C \dand \drhdnni C) \MTRA A_2$
\LL{\scs $W_S$}
\UIC{$(\dboxni C \dand \drhdnni C) \mtra A_1 \XAND (\dboxni C \dand \drhdnni C) \mtra A_2 \fCenter (\dboxni C \dand \drhdnni C) \MTRA A_2$}
\UIC{$(\dboxni C \dand \drhdnni C) \mtra A_1 \xand (\dboxni C \dand \drhdnni C) \mtra A_2 \fCenter (\dboxni C \dand \drhdnni C) \MTRA A_2$}
\UIC{$(\dboxni C \dand \drhdnni C) \MTAND (\dboxni C \dand \drhdnni C) \mtra A_1 \xand (\dboxni C \dand \drhdnni C) \mtra A_2 \fCenter A_2$}
\BIC{$((\dboxni C \dand \drhdnni C) \MTAND (\dboxni C \dand \drhdnni C) \mtra A_1 \xand (\dboxni C \dand \drhdnni C) \mtra A_2) \XAND ((\dboxni C \dand \drhdnni C) \MTAND (\dboxni C \dand \drhdnni C) \mtra A_1 \xand (\dboxni C \dand \drhdnni C) \mtra A_2) \fCenter A_1 \xand A_2$}
\LL{\scs $C_S$}
\UIC{$(\dboxni C \dand \drhdnni C) \MTAND (\dboxni C \dand \drhdnni C) \mtra A_1 \xand (\dboxni C \dand \drhdnni C) \mtra A_2 \fCenter A_1 \xand A_2$}
\DP
 \\
\end{tabular}
\end{center}
}

\noindent Iterating the previous derivation $n-1$ times (where the specific instantiation of $W_S$ is suitably chosen so as to derive the specific instantiation of the end sequent), we obtain the left premise of the following derivation, which provides the required derivation of the conclusion of  $RCK_n$ from its premise. 

{\fns
\begin{center}
\begin{tabular}{c}
$\mkern-40mu$
\AXC{$\vdots$}
\UI$(\dboxni C \dand \drhdnni C) \MTAND ((\dboxni C \dand \drhdnni C) \mtra A_1 \xand \ldots \xand (\dboxni C \dand \drhdnni C) \mtra A_n) \fCenter A_1 \xand \ldots \xand A_n$
\AX$A_1 \xand \ldots \xand A_n \fCenter B$
\RL{\scs $Cut_\mathsf{S}$}
\BI$(\dboxni C \dand \drhdnni C) \MTAND ((\dboxni C \dand \drhdnni C) \mtra A_1 \xand \ldots \xand (\dboxni C \dand \drhdnni C) \mtra A_n) \fCenter B$
\UI$(\dboxni C \dand \drhdnni C) \mtra A_1 \xand \ldots \xand (\dboxni C \dand \drhdnni C) \mtra A_n \fCenter (\dboxni C \dand \drhdnni C) \MTRA B$
\UI$(\dboxni C \dand \drhdnni C) \mtra A_1 \xand \ldots \xand (\dboxni C \dand \drhdnni C) \mtra A_n \fCenter (\dboxni C \dand \drhdnni C) \mtra B$
\DP 
 \\
\end{tabular}
\end{center}
}

\subsection{Conservativity}\label{ssec: conservativity}
To argue that the calculi introduced in Section \ref{sec:calculi}  conservatively extend their corresponding Hilbert systems, we follow the standard proof strategy discussed in \cite{greco2016unified,linearlogPdisplayed}. Let  $\vdash_{\mbf{L}}$ denote the syntactic consequence relation arising from  Hilbert systems, and $\models_{\textrm{H}}$ denote the semantic consequence relation arising from heterogeneous Kripke frames and their complex (heterogeneous) algebras. We need to show that, for all formulas $A$ and $B$ of the original language of the Hilbert system, if $\tau(A \vdash B)$ is derivable in a display calculus,  then  $A \vdash_{\mbf{L}} B$. This claim can be proved using  the following facts: (a) the rules of display calculi are sound w.r.t.~heterogeneous Kripke frames and their complex (heterogeneous) algebras (cf.~Section \ref{ssec:soundness});  (b) Hilbert systems are complete w.r.t.~their respective class of algebras; and (c)  homogenous algebras are equivalently presented as heterogeneous  algebras (cf.~Section \ref{Heterogeneous presentation}), so that the semantic consequence relations arising from each type of structures preserve and reflect the translation (cf.~Proposition \ref{prop:consequence preserved and reflected}). Then, let $A\vdash B$ be an entailment between formulas of  the language of the original Hilbert systems. If  $\tau(A\vdash B)$ is derivable in a display calculus, then, by (a),  $\models_{\textrm{H}} \tau(A\vdash B)$. By (c), this implies that $A \models_{V} B$, where $\models_{\textrm{V}}$ denotes the semantic consequence relation arising from m-algebras or c-algebras. By (b), this implies that $A\vdash_{\mbf{L}} B$, as required.

\section{Conclusions and further directions}
\label{sec: Conclusions}
\paragraph{Present contributions.} In the present paper, we have proposed a semantic analysis of two well-known non-normal logics (monotone modal logic and conditional logic), and used it  to introduce both a uniform correspondence-theoretic framework encompassing and significantly extending various well-known Sahlqvist-type results for these logics, and a proof-theoretic framework modularly capturing not only the basic logics but also an infinite class of axiomatic extensions of the basic monotone modal logic and conditional logic which includes well-known logics such as coalitional logic \cite{pauly2001logic} and preferential logic \cite{xu2006some}. 
The correspondence-theoretic and the proof-theoretic frameworks are closely connected with each other, both because they stem from the same semantic analysis, and because, more fundamentally,  they instantiate results, tools and insights developed at the interface of correspondence theory and structural proof theory \cite{greco2016unified}. This line of research can be naturally extended in various ways, and in what follows we list some natural further directions.

\paragraph{A modular framework for classical modal logic.} In the present paper, we have considered monotone modal logic and conditional logic because this choice made it possible to address a significant diversity of order-theoretic behaviour of the non-normal connectives with a minimal set of examples: namely a unary monotone operator and a binary operator which is normal (finitely meet-preserving) in its second coordinate and arbitrary in the first coordinate. A natural further direction concerns the systematic application of these techniques to wider classes of non-normal logics. Even restricting attention to the signature of $\mathcal{L}_\abla$, a natural direction concerns developing a modular account of classical modal logic  \cite{chellas1980modal} and its (monotone, regular) extensions up to normal modal logic. Of course the translations employed in the present paper for monotone modal logic do not account for classical modal logic, because monotonicity is in-built in these translations. The question is then whether one can express monotonicity as an (analytic) inductive condition under a translation similar to the one used in the non-normal coordinate of the conditional logic operator $>$.

\paragraph{From Boolean to distributive lattice-based non-normal logics.} The semantic analysis of the present paper hinges on the embedding of well-known state-based semantics (monotone neighbourhood frames, selection functions) into two-sorted classical Kripke frames and their discrete dualities with perfect (heterogeneous) Boolean algebras. Pivoting on more general discrete dualities, such as Birkhoff's discrete duality between perfect distributive lattices and posets, one can develop the systematic theory of e.g.~the {\em non-normal} counterparts of positive modal logic \cite{dunn1995positive,celani1997new} or intuitionistic modal logics \cite{FisSer1977,FisServi84,ono1977some}. In particular, it would be interesting to investigate the applicability of  the present approach  for capturing the lattice of non-normal intuitionistic modal logics introduced in \cite{DalGreOli2020}. 

\paragraph{Neighbourhood and selection functions as formal tools for context-relativization and category-formation.} We plan to investigate alternative (intuitive) interpretations of neighbourhood and conditional frames in order to expand the realm of possible applications. 

\noindent A natural option would be to consider a neighbourhood as a context relativising the interpretation of a term. An obvious application would be in lexical semantics (see e.g.~\cite{FregeInSpace14}) where the meaning of a word is often context-dependent. 

\noindent A second option would be to consider neighbourhoods as categories. Again, an obvious application would be in computational linguistics (see e.g.~\cite{lam61}) where each word is assigned to a syntactical category depending on the role it plays in the formation of grammatically correct sentences or phrases. 

\noindent Notice that a word can occur in different contexts or it can be assigned to different categories. Therefore, one may consider generalizations of the framework with multiple (weighed) neighbourhood functions or relations as a way to represent (probabilistic) distributions in a data set. 

\noindent In many machine learning approaches, a system needs both positive and negative evidence. For example, a classification system needs examples for each class that it is capable of predicting; if the classification is binary (e.g.~the system tries to decide whether an email is spam or not), it needs to have positive and negative examples. This generalises to multiple classes (e.g.~given a music song, predict the genre of that song). Therefore, one may consider (generalisations of) bi-neighbourhood frames (see e.g.~\cite{DalOliNeg18}), in which sets of pairs of neighbourhoods provide independent positive and negative evidence. 

\noindent Finally, each neighbourhood can be endowed with additional structure in order to capture specific behaviour. This refinement would build a bridge between the literature in non-normal modal logics and the literature on so-called modal logics for structural control in linguistics and logic (see e.g.~\cite{KurMoo97,Moo96,Gir87,linearlogPdisplayed}).

\appendix
\section{Analytic inductive inequalities}
\label{sec:analytic inductive ineq}
In the present section, we  specialize the definition of {\em analytic inductive inequalities} (cf.\ \cite{greco2016unified}) to the  multi-type languages $\mathcal{L}_{MT\abla}$ and $\mathcal{L}_{MT>}$ reported below.
{\small
\begin{center}
$\begin{array}{lll}
\mathsf{S} \ni A::= p  \mid \top \mid \bot \mid \neg A \mid A \land A \mid \xdianu \alpha\mid \xboxnuc\alpha &\quad\quad&\mathsf{S} \ni A::= p  \mid \top \mid \bot \mid \neg A \mid A \land A \mid  \alpha\mtra A
\\
\mathsf{N} \ni \alpha ::=  \dtop\mid \dbot \mid {\sim} \alpha \mid \alpha \dand \alpha \mid \dboxni A\mid \ddianni A &\quad\quad&\mathsf{N} \ni \alpha ::=  \dtop\mid \dbot \mid {\sim} \alpha \mid \alpha \dand \alpha \mid \dboxni A\mid \drhdnni A.
\end{array}$
\end{center}
}
An {\em order-type} over $n\in \mathbb{N}$  is an $n$-tuple $\epsilon\in \{1, \partial\}^n$. If $\epsilon$ is an order type, $\epsilon^\partial$ is its {\em opposite} order type; i.e.~$\epsilon^\partial(i) = 1$ iff $\epsilon(i)=\partial$ for every $1 \leq i \leq n$.
The connectives of the language above are grouped together  into the  families $\mathcal{F}: = \mathcal{F}_{\mathsf{S}}\cup \mathcal{F}_{\mathsf{N}}\cup \mathcal{F}_{\textrm{MT}}$ and $\mathcal{G}: = \mathcal{G}_{\mathsf{S}}\cup \mathcal{G}_{\mathsf{N}} \cup  \mathcal{G}_{\textrm{MT}}$, defined as follows:
\begin{center}
\begin{tabular}{lcl}
$\mathcal{F}_{\mathsf{S}}: = \{\xneg\}$&&$ \mathcal{G}_{\mathsf{S}} = \{\xneg\}$\\
$\mathcal{F}_{\mathsf{N}}: = \{\dneg\}$ && $\mathcal{G}_{\mathsf{N}}: = \{\dneg\}$\\
$\mathcal{F}_{\textrm{MT}}: = \{\xdianu, \ddianni \}$ && $\mathcal{G}_{\textrm{MT}}: = \{\dboxni, \xboxnuc, \mtra, \drhdnni\}$\\

\end{tabular}
\end{center}
For any $f\in \mathcal{F}$  (resp.\ $g\in \mathcal{G}$), we let $n_f\in \mathbb{N}$ (resp.~$n_g\in \mathbb{N}$) denote the arity of $f$ (resp.~$g$), and the order-type $\epsilon_f$ (resp.~$\epsilon_g$) on $n_f$ (resp.~$n_g$)  indicate whether the $i$th coordinate of $f$ (resp.\ $g$) is positive ($\epsilon_f(i) = 1$,  $\epsilon_g(i) = 1$) or  negative ($\epsilon_f(i) = \partial$,  $\epsilon_g(i) = \partial$). 
%
				%
				\begin{defn}[\textbf{Signed Generation Tree}]
					\label{def: signed gen tree}
					The \emph{positive} (resp.\ \emph{negative}) {\em generation tree} of any $\mathcal{L}_\textrm{MT}$-term $s$ is defined by labelling the root node of the generation tree of $s$ with the sign $+$ (resp.\ $-$), and then propagating the labelling on each remaining node as follows:
					For any node labelled with $\ell\in \mathcal{F}\cup \mathcal{G}$ of arity $n_\ell$, and for any $1\leq i\leq n_\ell$, assign the same (resp.\ the opposite) sign to its $i$th child node if $\epsilon_\ell(i) = 1$ (resp.\ if $\epsilon_\ell(i) = \partial$). Nodes in signed generation trees are \emph{positive} (resp.\ \emph{negative}) if are signed $+$ (resp.\ $-$).
				\end{defn}
				For any term $s(p_1,\ldots p_n)$, any order type $\epsilon$ over $n$, and any $1 \leq i \leq n$, an \emph{$\epsilon$-critical node} in a signed generation tree of $s$ is a leaf node $+p_i$ with $\epsilon(i) = 1$ or $-p_i$ with $\epsilon(i) = \partial$. An $\epsilon$-{\em critical branch} in the tree is a branch ending in an $\epsilon$-critical node. For any term $s(p_1,\ldots p_n)$ and any order type $\epsilon$ over $n$, we say that $+s$ (resp.\ $-s$) {\em agrees with} $\epsilon$, and write $\epsilon(+s)$ (resp.\ $\epsilon(-s)$), if every leaf in the signed generation tree of $+s$ (resp.\ $-s$) is $\epsilon$-critical.
				 We will also write $+s'\prec \ast s$ (resp.\ $-s'\prec \ast s$) to indicate that the subterm $s'$ inherits the positive (resp.\ negative) sign from the signed generation tree $\ast s$. Finally, we will write $\epsilon(s') \prec \ast s$ (resp.\ $\epsilon^\partial(s') \prec \ast s$) to indicate that the signed subtree $s'$, with the sign inherited from $\ast s$, agrees with $\epsilon$ (resp.\ with $\epsilon^\partial$).	
		\begin{defn}[\textbf{Good branch}]
					\label{def:good:branch}
					Nodes in signed generation trees will be called \emph{$\Delta$-adjoints}, \emph{syntactically left residual (SLR)}, \emph{syntactically right residual (SRR)}, and \emph{syntactically right adjoint (SRA)}, according to the specification given in Table \ref{Join:and:Meet:Friendly:Table}.
					A branch in a signed generation tree $\ast s$, with $\ast \in \{+, - \}$, is called a \emph{good branch} if it is the concatenation of two paths $P_1$ and $P_2$, one of which may possibly be of length $0$, such that $P_1$ is a path from the leaf consisting (apart from variable nodes) only of PIA-nodes 
					and $P_2$ consists (apart from variable nodes) only of Skeleton-nodes.


\begin{table}
\begin{center}
														
							\begin{tabular}{| c | c |}
								\hline
								Skeleton &PIA\\
								\hline
								$\Delta$-adjoints & SRA \\
								\begin{tabular}{ c c c c c c c}
									$+\ $ &\ $\xor\ $ &\ $\dor\ $ \\ 
									$-\ $ & $\xand$ &$\dand$ \\ 
								\end{tabular}
								&
								\begin{tabular}{c c c c c c c c c  }
									$+\ $ & \ $\xand$\  &\ $\dand$\ &\ $\dboxni$ \ & \ $\xboxnuc$ \ & \ $\mtra$ \ & \ $\drhdnni$\ &\ $\rdneg$\ &\ $\dneg$\ \\
									$-\ $ & \  $\xor $\  &\ $\dor$\  &\ $\xdianu$ \ &\ $ \ddianni$\ & \ $\rdneg$ \ &\ $\dneg$\    \\
								\end{tabular}
								\\
								\hline
								SLR &SRR\\
								\begin{tabular}{c c c c c c c c c}
									$+\ $ & \ $\xand$\  &\ $\dand$\ &\ $\xdianu$ \ &\ $ \ddianni$\ & \ $\rdneg$ \ &\ $\dneg$\   \\
									$-\ $ & \ $\xor$\  &\ $\dor$\  &\ $\dboxni$ \ & \ $\xboxnuc$ \ & \ $\mtra$ \ & \ $\drhdnni$\ &\ $\rdneg$\ &\ $\dneg$\  \\
								\end{tabular}
								&\begin{tabular}{c c c}
									 $+\ $ &\ $\xor$\  &\ $\dor$\ \\
									$-\ $ &\ $\xand$ \ &\ $\dand$ \ \\
								\end{tabular}
								\\
								\hline

							\end{tabular}
\vspace{0.5em}	
\caption{Skeleton and PIA nodes.}\label{Join:and:Meet:Friendly:Table}
\end{center}
\end{table}

\begin{center}
\begin{tikzpicture}[scale=0.4]
		\draw (-5,-1.5) -- (-3,1.5) node[above]{\Large$+$} ;
		\draw (-5,-1.5) -- (-1,-1.5) ;
		\draw (-3,1.5) -- (-1,-1.5);
		\draw (-6,0) node{Skeleton} ;
		\draw[dashed] (-3,1.5) -- (-4,-1.5);
		\draw[dashed] (-3,1.5) -- (-2,-1.5);
		\draw (-4,-1.5) --(-4.8,-3);
		\draw (-4.8,-3) --(-3.2,-3);
		\draw (-3.2,-3) --(-4,-1.5);
		\draw[dashed] (-4,-1.5) -- (-4,-3);
		\draw[fill] (-4,-3) circle[radius=.1] node[below]{$+p$};
		\draw
		(-2,-1.5) -- (-2.8,-3) -- (-1.2,-3) -- (-2,-1.5);
		\fill[pattern=north east lines]
		(-2,-1.5) -- (-2.8,-3) -- (-1.2,-3);
		\draw (-2,-3.5)node{$s_1$};
		\draw (-6,-2.25) node{PIA} ;
		\draw (0,1.8) node{$\leq$};
		\draw (5,-1.5) -- (3,1.5) node[above]{\Large$-$} ;
		\draw (5,-1.5) -- (1,-1.5) ;
		\draw (3,1.5) -- (1,-1.5);
		\draw (6,0) node{Skeleton} ;
		\draw[dashed] (3,1.5) -- (4,-1.5);
		\draw[dashed] (3,1.5) -- (2,-1.5);
		\draw (2,-1.5) --(2.8,-3);
		\draw (2.8,-3) --(1.2,-3);
		\draw (1.2,-3) --(2,-1.5);
		\draw[dashed] (2,-1.5) -- (2,-3);
		\draw[fill] (2,-3) circle[radius=.1] node[below]{$+p$};
		\draw
		(4,-1.5) -- (4.8,-3) -- (3.2,-3) -- (4, -1.5);
		\fill[pattern=north east lines]
		(4,-1.5) -- (4.8,-3) -- (3.2,-3) -- (4, -1.5);
		\draw (4,-3.5)node{$s_2$};
		\draw (6,-2.25) node{PIA} ;
		\end{tikzpicture}
\end{center}
\end{defn}

				\begin{defn}[\textbf{Analytic inductive inequalities}]
	\label{def:analytic inductive ineq}
					For any order type $\epsilon$ and any irreflexive and transitive relation $<_\Omega$ on $p_1,\ldots p_n$, the signed generation tree $*s$ $(* \in \{-, + \})$ of an $\mathcal{L}_{MT}$ term $s(p_1,\ldots p_n)$ is \emph{analytic $(\Omega, \epsilon)$-inductive} if
					\begin{enumerate}
						\item  every branch of $*s$ is good (cf.\ Definition \ref{def:good:branch});
						\item for all $1 \leq i \leq n$, every SRR-node occurring in  any $\epsilon$-critical branch with leaf $p_i$ is of the form $ \circledast(s, \beta)$ or $ \circledast(\beta, s)$, where the critical branch goes through $\beta$ and 
						\begin{enumerate}
							\item $\epsilon^\partial(s) \prec \ast s$ (cf.\ discussion before Definition \ref{def:good:branch}), and
							%
							\item $p_k <_{\Omega} p_i$ for every $p_k$ occurring in $s$ and for every $1\leq k\leq n$.
						\end{enumerate}

					\end{enumerate}
					
					 An inequality $s \leq t$ is \emph{analytic $(\Omega, \epsilon)$-inductive} if the signed generation trees $+s$ and $-t$ are analytic $(\Omega, \epsilon)$-inductive. An inequality $s \leq t$ is \emph{analytic inductive} if is analytic $(\Omega, \epsilon)$-inductive for some $\Omega$ and $\epsilon$.
				\end{defn}
								


\section{Algorithmic proof of Theorem \ref{theor:correspondence-noAlba}}
\label{sec:ALBA runs}

In what follows, we show that the correspondence results collected in Theorem \ref{theor:correspondence-noAlba} can be retrieved as instances of a suitable multi-type version of algorithmic correspondence  for normal logics (cf.~\cite{CoGhPa14,CoPa:non-dist}), hinging on the usual order-theoretic properties of the algebraic interpretations of the logical connectives, while admitting nominal variables of two sorts. For the sake of enabling a swift translation into the language of m-frames and c-frames, we write nominals directly as singletons, and, abusing notation, we quantify over the elements defining these singletons. These computations also serve to prove that each analytic structural rule is sound on the heterogeneous perfect algebras validating its correspondent axiom. In the computations relative to each analytic axiom, the line marked with $(\star)$ marks the quasi-inequality that interprets the corresponding analytic rule. This computation does {\em not} prove the equivalence between the axiom and the rule, since the variables occurring in each starred quasi-inequality are restricted rather than arbitrary. However, the proof of soundness is completed by observing that all ALBA rules in the steps above the marked inequalities are (inverse) Ackermann and adjunction rules, and hence are sound also when arbitrary variables replace (co-)nominal variables. 


{\small{
\begin{flushleft}
\begin{tabular}{@{}clr c clr}
\mc{3}{l}{N.\ \, $\mathbb{H}\models \nabla \top \ \rightsquigarrow\ \top\subseteq [\nu^c] \langle \not \ni \rangle \top$} & \ \ \ \ \ \ \ \ \ & \mc{3}{l}{\ P. \ $\mathbb{H}\models \neg \nabla \bot\ \rightsquigarrow\ \top\subseteq \neg \langle \nu \rangle [\ni ] \bot$} \\
\cline{1-3}
\cline{5-7}
     & $\top \subseteq [\nu^c] \langle \not \ni \rangle \top $ & & & & $\top \subseteq\neg \langle \nu \rangle [\ni ] \bot$ & \\
iff \ & \mc{2}{l}{$\forall X \forall w [\langle \not \ni \rangle \top \subseteq \{X\}^c  \Rightarrow \{w\} \subseteq [\nu^c] \{X\}^c]$} & & iff \ & \mc{2}{l}{$ \forall X [ X \subseteq [\ni]\bot \Rightarrow T \subseteq \neg \langle \nu \rangle  X]$} \\
\mc{3}{r}{$(\star)$ first. app.} & & \mc{3}{r}{$(\star)$ first. app.} \\
iff \ & \mc{2}{l}{$\forall X \forall w[X = W  \Rightarrow \{w\} \subseteq [\nu^c] \{X\}^c)$} & & iff \ & $W \subseteq\neg \langle \nu \rangle [\ni ] \emptyset$  & \\
\mc{3}{r}{($\langle \ni \rangle \top =  \{W\}^c$)} & & & & \\
iff \ & $\forall w[ \{w\} \subseteq [\nu^c] \{W\}^c]$  & & & iff \ & $W \subseteq\neg \langle \nu \rangle \{ \emptyset \}$  & $[\ni ] \emptyset = \{Z\subseteq W\mid Z\subseteq \emptyset\}$ \\
iff \ & $\forall w[\{w\} \subseteq (R_{\nu^c}^{-1}[W])^c]$  & & & iff \ & $W \subseteq \{w \in W \mid  w R_\nu \emptyset\}^c$  & \\
iff \ & $\forall w[ \{w\} \subseteq R_{\nu}^{-1}[W]]$  & & & iff \ & $\forall w[ \emptyset \not \in \nu(w)]$.  & \\
iff \ & $\forall w[ W\in \nu (w) ]$ & & & & & \\
\end{tabular}
\end{flushleft}
 }

{\small{
\begin{flushleft}
\begin{tabular}{clr}
\mc{3}{l}{C.\ \, $\mathbb{H}\models \nabla p \land \nabla q \to \nabla ( p \land q)  \ \rightsquigarrow\ \langle \nu \rangle [\ni] p \land \langle \nu \rangle [\ni] q \subseteq [\nu^c] \langle \not \ni \rangle (p \land q)$} \\
\hline
    & \mc{2}{l}{$\langle \nu \rangle [\ni] p \land \langle \nu \rangle [\ni] q \subseteq [\nu^c] \langle \not \ni \rangle (p \land q)$} \\
iff & \mc{2}{l}{$\forall  Z_1 Z_2 Z_3 \forall p q[ \{Z_1\}\subseteq [\ni] p \ \& \   \{Z_2\} \subseteq [\ni] q \ \&\  \langle \not \ni \rangle(p \land q)\subseteq  \{Z_3\}^c\Rightarrow \langle \nu \rangle  \{Z_1\} \land \langle \nu \rangle  \{Z_2\} \subseteq [\nu^c]\{Z_3\}^c]$} \\
\mc{3}{r}{first approx.} \\
iff & \mc{2}{l}{$\forall  Z_1 Z_2 Z_3 \forall p q[ \langle \in \rangle \{Z_1\}\subseteq p \ \& \ \langle \in \rangle   \{Z_2\} \subseteq  q \ \&\  \langle \not \ni \rangle(p \land q)\subseteq  \{Z_3\}^c \Rightarrow \langle \nu \rangle  \{Z_1\} \land \langle \nu \rangle  \{Z_2\} \subseteq [\nu^c] \{Z_3\}^c]$} \\
\mc{3}{r}{Residuation} \\
iff & $\forall  Z_1 \forall  Z_2 \forall Z_3[   \langle \not \ni \rangle(\langle \in \rangle \{Z_1\} \land \langle \in \rangle   \{Z_2\} )\subseteq  \{Z_3\}^c\Rightarrow \langle \nu \rangle  \{Z_1\} \land \langle \nu \rangle  \{Z_2\} \subseteq [\nu^c] \{Z_3\}^c]$ & $(\star)$ Ackermann \\
iff & $\forall  Z_1 \forall  Z_2 \forall Z_3[   (\langle \in \rangle \{Z_1\} \land \langle \in \rangle   \{Z_2\} )\subseteq [\not \in] \{Z_3\}^c\Rightarrow \langle \nu \rangle  \{Z_1\} \land \langle \nu \rangle  \{Z_2\} \subseteq [\nu^c] \{Z_3\}^c]$ & Residuation \\
iff & \mc{2}{l}{$\forall  Z_1 \forall  Z_2 \forall Z_3[\forall x(xR_\in Z_1\ \& \ xR_\in Z_2\Rightarrow \lnot  xR_{\notin}Z_3)\Rightarrow\forall x(xR_\nu Z_1\ \&\ xR_\nu Z_2\Rightarrow \lnot xR_{\nu^c}Z_3)]$} \\
\mc{3}{r}{Standard translation} \\
iff & \mc{2}{l}{$\forall  Z_1 \forall  Z_2 \forall Z_3[\forall x(x\in Z_1\ \& \ x\in Z_2\Rightarrow x\in Z_3)\Rightarrow\forall x(Z_1\in \nu(x)\ \& \ Z_2\in\nu(x)\Rightarrow Z_3\in\nu(x))]$} \\
\mc{3}{r}{Relations interpretation} \\
iff &$\forall  Z_1 \forall  Z_2 \forall Z_3[Z_1\cap Z_2\subseteq Z_3\Rightarrow\forall x(Z_1\in \nu(x)\ \& \ Z_2\in\nu(x)\Rightarrow Z_3\in\nu(x))]$ & \\
iff &$\forall  Z_1 \forall  Z_2 \forall x[Z_1\in \nu(x)\ \& \ Z_2\in\nu(x)\Rightarrow Z_1\cap Z_2\in\nu(x))]$. & Monotonicity \\
\end{tabular}
\end{flushleft}
}}

{\small{
\begin{center}
\begin{tabular}{clr}
\mc{3}{l}{4'.\ \, $\mathbb{H}\models \nabla p\to \nabla \nabla p\ \rightsquigarrow\ \langle \nu\rangle [\ni] p\subseteq [\nu^c] \langle \not \ni \rangle [\nu^c] \langle \not \ni \rangle  p$} \\
\hline
    \ & $\langle \nu\rangle [\ni] p\subseteq [\nu^c] \langle \not \ni \rangle [\nu^c] \langle \not \ni \rangle p$ & \\
iff \ & $\forall Z_1\forall x' \forall p [ \{Z_1\}  \subseteq [\ni] p \ \&\ [\nu^c] \langle \not \ni \rangle [\nu^c] \langle \not \ni \rangle p\subseteq \{x'\}^c)\Rightarrow \langle \nu \rangle  \{Z_1\}  \subseteq  \{x'\}^c]$ & first approx. \\
iff \ & $\forall  Z_1 \forall x'\forall p [\langle \in \rangle  \{Z_1\} \subseteq  p \ \&\ [\nu^c] \langle \not \ni \rangle [\nu^c] \langle \not \ni \rangle p\subseteq  \{x'\}^c)\Rightarrow \langle \nu \rangle  \{Z_1\}  \subseteq  \{x'\}^c]$ & Residuation \\
iff \ & $\forall  Z_1 \forall x'[[\nu^c] \langle \not \ni \rangle [\nu^c] \langle \not \ni \rangle\langle \in \rangle \{Z_1\}  \subseteq  \{x'\}^c \Rightarrow \langle \nu \rangle  \{Z_1\}  \subseteq  \{x'\}^c]$ & Ackermann \\
iff \ & $\forall  Z_1[\langle \nu \rangle \{Z_1\} \subseteq[\nu^c] \langle \not \ni \rangle [\nu^c] \langle \not \ni \rangle\langle \in \rangle \{Z_1\} ]$ \\
iff \ & \mc{2}{l}{$\forall  Z_1 \forall x[ xR_\nu Z_1 \Rightarrow \forall Z_2 (x R_{\nu^c} Z_2 \Rightarrow \exists y (Z_2 R_{\not \ni} y \ \& \ \forall Z_3 (yR_{\nu^c} Z_3 \Rightarrow \exists w ( Z_3 R_{\not \ni} w \ \& \ wR_\in Z_1 ) )))]$} \\
\mc{3}{r}{Standard translation} \\
iff \ & \mc{2}{l}{$\forall  Z_1 \forall x[ x \in \nu(Z) \Rightarrow \forall Z_2 (Z_2 \not \in \nu(x) \Rightarrow \exists y (y \not \in Z_2 \ \& \ \forall Z_3 (Z_2 \not \in \nu(y) \Rightarrow \exists w( w \not \in Z_3 \ \& \ w \in Z_1 ))))]$} \\
\mc{3}{r}{Relations translation} \\
iff \ & \mc{2}{l}{$\forall  Z_1 \forall x[ x \in \nu(Z) \Rightarrow \forall Z_2 (Z_2 \not \in \nu(x) \Rightarrow \exists y (y \not \in Z_2 \ \& \ \forall Z_3 (Z_2 \not \in \nu(y) \Rightarrow Z_1\nsubseteq Z_3)))]$} \\
\mc{3}{r}{Relations translation} \\
iff \ & \mc{2}{l}{$\forall  Z_1 \forall x[ x \in \nu(Z) \Rightarrow(\forall Z_2(\forall y(\forall Z_3(Z_1\subseteq Z_3\Rightarrow Z_3\in\nu(y))\Rightarrow y\in Z_2)\Rightarrow Z_2\in\nu(x)))]$} \\
\mc{3}{r}{Contraposition} \\
iff \ &$\forall  Z_1 \forall x[ x \in \nu(Z) \Rightarrow(\forall Z_2(\forall y(Z_1\in\nu(y))\Rightarrow y\in Z_2)\Rightarrow Z_2\in\nu(x)))]$ & Monotonicity \\
iff \ &$\forall  Z_1 \forall x[ x \in \nu(Z) \Rightarrow\{ y\mid Z_1\in\nu(y)\}\in\nu(x)]$. & Monotonicity \\
\end{tabular}
\end{center}
}}

{\small{ 
\begin{flushleft}
\begin{tabular}{clr}
\mc{3}{l}{4.\ \, $\mathbb{H}\models \nabla \nabla  p\to \nabla p\ \rightsquigarrow\ \langle \nu\rangle [\ni] \langle \nu\rangle [\ni] p\subseteq[ \nu^c ] \langle \not \ni \rangle p$} \\
\hline
    \ &$ \langle \nu\rangle [\ni]\langle \nu\rangle [\ni] p\subseteq[ \nu^c ] \langle \not \ni \rangle p$ & \\ 
iff \ & $\forall  x\forall Z_1 \forall p[ \{x\} \subseteq \xdianu\dboxni\xdianu\dboxni p\ \&\ \ddianni p\subseteq\{Z_1\}^c \Rightarrow  \{x\}\subseteq \xboxnuc\{Z_1\}^c]$ & first approx. \\
iff \ & $\forall  x\forall Z_1 \forall p[ \{x\} \subseteq \xdianu\dboxni\xdianu\dboxni p\ \&\  p\subseteq [\notin]\{Z_1\}^c \Rightarrow  \{x\}\subseteq \xboxnuc\{Z_1\}^c]$ & Adjunction \\
iff \ & $\forall  x\forall Z_1[ \{x\} \subseteq \xdianu\dboxni\xdianu\dboxni  [\notin]\{Z_1\}^c \Rightarrow  \{x\}\subseteq \xboxnuc\{Z_1\}^c]$ & Ackermann \\
iff \ & \mc{2}{l}{$\forall  x\forall Z_1[(\exists Z_2(xR_\nu Z_2\ \&\ \forall y(Z_2 R_\ni y\Rightarrow\exists Z_3(y R_\nu Z_3\ \&\ \forall w(Z_3 R_\ni w\Rightarrow \lnot wR_{\not\in} Z_1)))))\Rightarrow \lnot x R_{\nu^c} Z_1]$} \\
\mc{3}{r}{Standard translation} \\
iff \ & \mc{2}{l}{$\forall  x\forall Z_1[((\exists Z_2\in \nu(x))(\forall y\in Z_2)(\exists Z_3\in\nu(y))(\forall w\in Z_3)(w\in Z_1))\Rightarrow Z_1\in\nu(x)]$} \\
\mc{3}{r}{Relation translation} \\
iff \ &$\forall  x\forall Z_1[((\exists Z_2\in \nu(x))(\forall y\in Z_2)(\exists Z_3\in\nu(y))(Z_3\subseteq Z_1))\Rightarrow Z_1\in\nu(x)]$ & \\
iff \ &$\forall  x\forall Z_1\forall Z_2[(Z_2\in \nu(x)\ \&\ (\forall y\in Z_2)(\exists Z_3\in\nu(y))(Z_3\subseteq Z_1))\Rightarrow  Z_1\in\nu(x)]$ & \\
iff \ &$\forall  x\forall Z_1\forall Z_2[(Z_2\in \nu(x)\ \&\ (\forall y\in Z_2)( Z_1\in\nu(y)))\Rightarrow  Z_1\in\nu(x)]$ & Monotonicity \\
\end{tabular}
\end{flushleft}
}}

{\small{
\begin{flushleft}
\begin{tabular}{clr}
\mc{3}{l}{5.\ \, $\mathbb{H}\models \neg \nabla \neg p\to \nabla \neg  \nabla \neg p \ \rightsquigarrow\ \neg[\nu^c] \langle \not \ni \rangle \neg p \subseteq  [\nu^c] \langle \not \ni \rangle \neg \langle \nu \rangle  [ \ni ] \neg p$} \\
\hline
    \ & $\neg[\nu^c] \langle \not \ni \rangle \neg p \subseteq [\nu^c] \langle \not \ni \rangle \neg \langle \nu \rangle  [ \ni ] \neg p$ & \\
iff \ & $\forall x\forall Z_1[[\nu^c] \langle \not \ni \rangle \neg \langle \nu \rangle  [ \ni ] \neg p\subseteq \{x\}^c\ \&\ \langle\not\ni\rangle \lnot p\subseteq\{Z_1\}^c \Rightarrow \neg[\nu^c] \{Z\}^c \subseteq\{x\}^c ]$ & first approx. \\
iff \ & $\forall x\forall Z_1[[\nu^c] \langle \not \ni \rangle \neg \langle \nu \rangle  [ \ni ] \neg p\subseteq \{x\}^c\ \&\ \lnot[\not\in]  \{Z_1\}^c\subseteq p \Rightarrow \neg[\nu^c] \{Z\}^c \subseteq\{x\}^c ]$ & Residuation \\
iff \ & $\forall x\forall Z_1[[\nu^c] \langle \not \ni \rangle \neg \langle \nu \rangle  [ \ni ] \neg \lnot[\not\in]  \{Z_1\}^c\subseteq \{x\}^c\Rightarrow \neg[\nu^c] \{Z\}^c \subseteq\{x\}^c ]$ & Ackermann \\
iff \ & $\forall Z_1[\neg[\nu^c] \{Z_1\}^c \subseteq[\nu^c] \langle \not \ni \rangle \neg \langle \nu \rangle  [ \ni ] \neg \lnot[\not\in]  \{Z_1\}^c ]$ & \\ 
iff \ & \mc{2}{l}{$\forall Z_1\forall x[xR_{\nu^c}Z_1\Rightarrow\forall Z_2(xR_{\nu^c}Z_2\Rightarrow\exists y(Z_2 R_{\not\ni} y\ \&\ \forall Z_3(yR_{\nu}Z_3\Rightarrow\exists w(Z_3 R_{\ni}w\ \&\ wR_{\notin}Z_1))))]$} \\
\mc{3}{r}{Standard translation} \\
iff \ & $\forall Z_1\forall x[Z_1\notin\nu(x)\Rightarrow(\forall Z_2\notin\nu(x))(\exists y\notin Z_2)(\forall Z_3\in\nu(y))(\exists w\in Z_3)(w\notin Z_1)]$ & \\
\mc{3}{r}{Relation translation} \\
iff \ & $\forall Z_1\forall x[Z_1\notin\nu(x)\Rightarrow(\forall Z_2\notin\nu(x))(\exists y\notin Z_2)(\forall Z_3\in\nu(y))(Z_3\nsubseteq Z_1)]$ & \\
iff \ & $\forall Z_1\forall x[Z_1\notin\nu(x)\Rightarrow\forall Z_2(((\forall y\notin Z_2)(\exists Z_3\in\nu(y))(Z_3\subseteq Z_1))\Rightarrow Z_2\in\nu(x))]$ & \\
\mc{3}{r}{Contraposition} \\
iff \ & $\forall Z_1\forall x[Z_1\notin\nu(x)\Rightarrow\forall Z_2((\forall y\notin Z_2) (Z_1\in\nu(y))\Rightarrow Z_2\in\nu(x))]$ & Monotonicity \\
iff \ & $\forall Z_1\forall x[Z_1\notin\nu(x)\Rightarrow\{y\mid Z_1\in\nu(y)\}^c\in\nu(x))]$ & Monotonicity \\
\end{tabular}
\end{flushleft}
}}

{\small{ 
\begin{flushleft}
\begin{tabular}{clr}
\mc{3}{l}{B.\ \, $\mathbb{H}\models p \to \nabla \neg  \nabla \neg p\ \rightsquigarrow\  p \subseteq [\nu^c] \langle \not \ni \rangle \neg \langle \nu \rangle  [ \ni ] \neg p$} \\
\hline
    \ &$p \subseteq [\nu^c] \langle \not \ni \rangle \neg \langle \nu \rangle  [ \ni ] \neg p$ & \ \\
iff \ & $\forall  x \forall p [  \{x\} \subseteq p \Rightarrow  \{x\} \subseteq [\nu^c] \langle \not \ni \rangle \neg \langle \nu \rangle  [ \ni ] \neg p]$ & \ first approx. \\
iff \ & $\forall  x [  \{x\} \subseteq [\nu^c] \langle \not \ni \rangle \neg \langle \nu \rangle  [ \ni ] \neg  \{x\}]$ & \ Ackermann \\
iff \ & $\forall  x [  \{x\} \subseteq [\nu^c] \langle \not \ni \rangle  [ \nu ]  \langle \ni \rangle  \{x\}]$ & \ \\
iff \ & $\forall  x [ \forall Z_1 (x R_{\nu^c} Y \Rightarrow \exists y (Y R_{\not \ni} x \ \& \ \forall Z_2 (y R_\nu Z_2 \Rightarrow Z_2 R_\ni x )))]$ & \ Standard translation \\
iff \ & $\forall  x [ \forall Z_1 (Z_1 \not \in \nu(x) \Rightarrow \exists y (x \not \in Z_1 \ \& \ \forall Z_2 (Z_2 \in \nu(y) \Rightarrow x \in Z_2 )))]$ & \ Relations translation \\
iff \ & $\forall  x [ \forall Z_1(\forall y(\forall Z_2(x\notin Z_2\Rightarrow Z_2\notin\nu(y))\Rightarrow y \in Z_1) \Rightarrow Z_1 \in\nu(x) )]$ & \ Contrapositive \\
iff \ & $\forall  x [ \forall Z_1(\forall y(\{x\}^c\notin\nu(y_1))\Rightarrow y \in Z_1) \Rightarrow Z_1 \in\nu(x) )]$ & \ Monotonicity \\
iff \ & $\forall  x [ \{ y \mid \{x\}^c\notin\nu(y)\}\in\nu(x) )]$ & \ Monotonicity \\
iff \ & $\forall x \forall X[ x \in X \Rightarrow  \{ y\mid X^c \notin \nu(y) \} \in \nu(x)]$ & \ Monotonicity \\
\end{tabular}
\end{flushleft}
}}

{\small{
\begin{flushleft}
\begin{tabular}{clr}
\mc{3}{l}{D.\ \, $\mathbb{H}\models\nabla p \to \neg  \nabla \neg p\ \rightsquigarrow\  \langle \nu \rangle  [ \ni ] p \subseteq \neg\langle \nu \rangle  [ \ni ] \neg p$} \\
\hline
   \ &$\langle \nu \rangle   [ \ni ] p \subseteq\neg\langle \nu \rangle  [ \ni ] \neg p$ \\ 
iff \ & $ \forall Z \forall Z' [ \{Z \} \subseteq [\ni ] p \ \&\  Z' \subseteq \ [\ni] \neg p  \Rightarrow \langle \nu \rangle \{Z \}\subseteq\neg \langle \nu \rangle Z']$ & first approx. \\
iff \ & $ \forall Z \forall Z' [ \langle \in \rangle \{Z \} \subseteq  p \ \&\  \{Z'\} \subseteq \ [\ni] \neg p  \Rightarrow \langle \nu \rangle \{Z \}\subseteq\neg \langle \nu \rangle \{Z'\}]$ & Residuation \\
iff \ & $ \forall Z \forall Z' [   \{Z'\} \subseteq \ [\ni] \neg \langle \in \rangle \{Z \}  \Rightarrow \langle \nu \rangle \{Z \}\subseteq\neg \langle \nu \rangle \{Z'\}]$ & $(\star)$ Ackermann \\
iff \ & $\forall Z [ \langle \nu \rangle \{Z\} \subseteq \neg\langle \nu \rangle  [ \ni ] \neg \langle \in \rangle \{Z\}]$ & \\
iff \ & $\forall Z [ \langle \nu \rangle \{Z\} \subseteq [ \nu]  \langle \ni \rangle \langle \in \rangle \{Z\}]$ & \\
iff \ & $\forall Z\forall x [ xR_\nu Z \Rightarrow  \forall Y(xR_\nu Y\Rightarrow \exists w(Y R_\ni w\ \&\ w R_\in Z))]$ & Standard Translation \\
iff \ & $\forall Z\forall x [ Z\in \nu(x) \Rightarrow  \forall Y(Y\in\nu(x)\Rightarrow \exists w( w\in Y \ \&\ w \in Z))]$ & Relation translation \\
iff \ & $\forall Z\forall x [ Z\in \nu(x) \Rightarrow  \forall Y(Y\in\nu(x)\Rightarrow Y\nsubseteq Z^c)]$ & \\
iff \ & $\forall Z\forall x [ Z\in \nu(x) \Rightarrow  \forall Y(Y\subseteq Z^c\Rightarrow Y\notin\nu(x))]$ & Contrapositive \\
iff \ & $\forall Z\forall x \forall Y[ Z\in \nu(x) \Rightarrow  Z^c\notin\nu(x)]$ & Monotonicity \\
\end{tabular}
\end{flushleft}
}}

{\small{
\begin{flushleft}
\begin{tabular}{cl@{}r}
\mc{3}{l}{CS.\ \, $\mathbb{H}\models(p \land q) \to (p\succ q)\ \rightsquigarrow\ (p\land q) \subseteq ([\ni] p\cap[\not\ni\rangle p){\rhd}  q$} \\
\hline
      & $(p\land q) \subseteq ([\ni] p\dand[\not\ni\rangle p){\rhd}  q $ \\
iff \ & \mc{2}{l}{$\forall  x \forall  Z\forall x' \forall p q[  \{x\} \subseteq p \land q \ \& \  \{Z\}  \subseteq [ \ni ] p \dand [\not \ni \rangle p \ \& \ q  \subseteq \{x'\}^c   \Rightarrow   \{x\} \subseteq  \{Z\}    {\rhd}    \{x'\}^c]$} \\
\mc{3}{r}{first. approx.} \\
iff \ & \mc{2}{l}{$\forall  x\forall  Z  \forall x\forall p \forall q[  \{x\} \subseteq p \ \& \  \{x\} \subseteq q \ \& \  \{Z\}  \subseteq [ \ni ] p \ \& \  \{Z\}  \subseteq [\not \ni \rangle p \ \& \ q  \subseteq  \{x'\}^c   \Rightarrow   \{x\} \subseteq  \{Z\}    {\rhd}   \{x'\}^c]$} \\
\mc{3}{r}{Splitting rule} \\
iff \ & \mc{2}{l}{$\forall  x \forall  Z\forall x'\forall p \forall q[  \{x\} \subseteq p \ \& \  \{x\} \subseteq q \ \& \  \{Z\}  \subseteq [ \ni ] p \ \& \ p \subseteq [ \not  \in \rangle  \{Z\}   \ \& \ q  \subseteq  \{x'\}^c   \Rightarrow   \{x\} \subseteq  \{Z\}    {\rhd}  \{x'\}^c]$} \\
\mc{3}{r}{Residuation} \\
iff \ & \mc{2}{l}{$\forall  x \forall  Z  \forall x'  \forall q[  \{x\} \subseteq [ \not  \in \rangle  \{Z\}  \ \& \  \{x\} \subseteq q \ \& \  \{Z\}  \subseteq [ \ni ] [ \not  \in \rangle  \{Z\}    \ \& \ q  \subseteq  \{x'\}^c   \Rightarrow   \{x\} \subseteq  \{Z\}    {\rhd}   \{x'\}^c]$} \\
\mc{3}{r}{Ackermann} \\
iff \ & \mc{2}{l}{$\forall  x\forall  Z \forall x' [  \{x\} \subseteq [ \not  \in \rangle  \{Z\}   \ \& \  \{Z\}  \subseteq [ \ni ] [ \not  \in \rangle  \{Z\}    \ \& \  \{x\}  \subseteq  \{x'\}^c   \Rightarrow   \{x\} \subseteq  \{Z\}    {\rhd}  \{x'\}^c]$} \\
\mc{3}{r}{$(\star)$ Ackermann} \\
iff \ & \mc{2}{l}{$\forall  x \forall  Z  [  \{x\} \subseteq [ \not  \in \rangle  \{Z\}   \ \& \  \{Z\}  \subseteq [ \ni ] [ \not  \in \rangle  \{Z\}    \Rightarrow   \{x\} \subseteq  \{Z\}    {\rhd}    \{x\}]$} \\
iff \ & $\forall  x \forall  Z  [  \lnot x R_{\not\in} Z   \ \& \  \forall y(Z R_\ni y\Rightarrow \lnot y R_{\not \in} Z)   \Rightarrow   \forall y( T_f(x,Z,y) \Rightarrow y =x)]$ & Standard translation \\
iff \ & $\forall  x \forall  Z  [   x\in Z   \ \& \  \forall y(y\in Z\Rightarrow  Z\in y )   \Rightarrow   \forall y( y\in f(x,Z) \Rightarrow y=x)]$ & Relation interpretation \\
iff \ & $\forall  x \forall  Z  [   x\in Z \   \Rightarrow \   \forall y( y\in f(x,Z) \Rightarrow y=x)]$ & \\
iff \ & $\forall x\forall Z[x \in Z \Rightarrow f(x,Z) \subseteq \{x\}]$ & \\
\end{tabular}
\end{flushleft}
}}

{\small{
\begin{flushleft}
	\begin{tabular}{clr}
\mc{3}{l}{ID.\ \, $\mathbb{H}\models p\succ p \ \rightsquigarrow\ ([\ni] p\dand[\not\ni \rangle p){\rhd} p$} \\
\hline
		&$\top\subseteq ([\ni] p\dand[\not\ni\rangle p){\rhd} p$ &\\  
		iff & $\forall  Z Z'\forall x' p [( \{Z\} \subseteq [\ni] p \ \&\ \{Z'\}\subseteq [\not\ni\rangle p \ \&\ p\subseteq  \{x'\}^c)\Rightarrow  \top \subseteq( \{Z\} \dand\{Z'\}){\rhd}   \{x'\}^c]$ & first approx. \\ 
		iff & $\forall  Z Z'\forall x' p [(\langle\in\rangle \{Z\} \subseteq p \ \&\ \{Z'\}\subseteq [\not\ni \rangle p \ \&\ p\subseteq \{x'\}^c)\Rightarrow  \top\subseteq( \{Z\} \dand\{Z'\}){\rhd}   \{x'\}^c]$ & Adjunction \\
		iff & $\forall  Z \forall Z'\forall  x' [(\{Z'\}\subseteq [\not\ni \rangle \langle\in\rangle \{Z\}  \ \&\ \langle\in\rangle \{Z\} \subseteq  \{x'\}^c)\Rightarrow  \top\subseteq( \{Z\} \dand\{Z'\}){\rhd}  \{x'\}^c$ & Ackermann \\
		iff & $\forall  Z \forall Z'[\{Z'\}\subseteq [\not\ni \rangle \langle\in\rangle \{Z\}  \ \Rightarrow \forall  x' [ \langle\in\rangle \{Z\} \subseteq  \{x'\}^c \Rightarrow  \top \subseteq( \{Z\} \dand\{Z'\}){\rhd}  \{x'\}^c]]$ & Currying \\
		iff & $\forall  Z\forall Z'[\{Z'\}\subseteq [\not\ni \rangle \langle\in\rangle \{Z\}  \ \Rightarrow  \top \subseteq( \{Z\} \dand\{Z'\}){\rhd}  \langle\in\rangle \{Z\} ]$ & $(\star)$ Ackermann \\
		iff & $\forall x\forall  Z\forall Z'[ \forall w(Z'R_{\not\ni}w\Rightarrow \lnot wR_\in Z) \Rightarrow \forall y (T_f (x, Z, y ) \ \& \ Z=Z'   \Rightarrow y \in Z) ]$ & \\
\mc{3}{r}{Standard Translation} \\
		iff & $\forall x\forall  Z\forall Z'\forall y[ \forall w(Z'R_{\not\ni}w\Rightarrow \lnot wR_\in Z) \ \&\   (T_f (x, Z, y ) \ \& \ Z=Z'   \Rightarrow y \in Z) ]$ & \\
		iff & $\forall x\forall  Z\forall Z'\forall y[ \forall w(w\notin Z'\Rightarrow  w\notin Z) \ \&\   (y\in f (x, Z) \ \& \ Z=Z'   \Rightarrow y \in Z) ]$ & \\
\mc{3}{r}{Relation interpretation} \\
		iff & $\forall x\forall  Z\forall Z'\forall y[ Z\subseteq Z' \ \&\   (y\in f (x, Z) \ \& \ Z=Z'   \Rightarrow y \in Z) ]$ & \\
		iff & $\forall x\forall  Z\forall y[(y\in f (x, Z)\Rightarrow y \in Z) ]$ & \\
		iff  & $\forall x\forall Z[f(x,Z)\subseteq Z]$ & \\		
	\end{tabular}
\end{flushleft}
}}

{\small{
\begin{flushleft}
\begin{tabular}{clr}
\mc{3}{l}{T.\ \, $\mathbb{H}\models\nabla p\to p\ \rightsquigarrow\ \langle \nu\rangle [\ni] p\subseteq p$} \\
\hline
    \ & $\langle \nu\rangle [\ni] p\subseteq p$ & \ \\
iff \ & $\forall  x\forall Z\forall p [ p \subseteq \{x\}^c\ \&\ \{Z\}\subseteq\dboxni p \Rightarrow \langle \nu\rangle\{Z\} \subseteq  \{x\}^c]$ & \ first approx. \\
iff \ & $\forall  x\forall Z\forall p [ p \subseteq \{x\}^c\ \&\ \langle\in\rangle\{Z\}\subseteq p \Rightarrow \langle \nu\rangle\{Z\} \subseteq  \{x\}^c]$ & \ Adjunction \\
iff \ & $\forall  x\forall Z[\langle\in\rangle\{Z\}\subseteq \{x\}^c \Rightarrow \langle \nu\rangle\{Z\} \subseteq  \{x\}^c]$ & \ $(\star)$ Ackermann \\
iff \ & $\forall  Z  [\langle \nu\rangle \{Z\} \subseteq \langle \ni \rangle \{Z\} ]$ & \ inverse approx. \\
iff \ & $\forall x\forall Z[x  R_\nu Z\Rightarrow xR_\ni Z]$ & \ Standard translation\\
iff \ & $\forall x\forall Z[Z\in \nu(x)\Rightarrow x \in Z]$. & \ Relation translation\\
\end{tabular}
\end{flushleft}
}}

{\small{
\begin{flushleft}
\begin{tabular}{clr}
\mc{3}{l}{CEM.\ \, $\mathbb{H}\models (p \succ q) \lor (p\succ \neg q) \ \rightsquigarrow\ (([\ni] p\dand[\not\ni\rangle p){\rhd}  q)\lor (([\ni] p\dand[\not\ni\rangle p){\rhd}  \neg q)$} \\
\hline
    \ &$\top \subseteq (([\ni] p\dand[\not\ni\rangle p){\rhd}  q)\lor (([\ni] p\dand[\not\ni\rangle p){\rhd}  \neg q)$ &\\
iff \ & $\forall p \forall q\forall X \forall Y \forall x \forall y   (\{X\} \subseteq [\ni] p\dand[\not\ni\rangle p \ \& \ \{Y\} \subseteq [\ni] p\dand[\not\ni\rangle p\ \& \ q \subseteq \{x\}^c \ \& \ \{y\} \subseteq q$ & \\
      & \mc{2}{r}{$\Rightarrow \top \subseteq (\{X\}{\rhd}\{x\}^c)\lor (\{Y\}{\rhd}\neg \{y\}) $ \ \ \ \ first approx.} \\
iff \ & $\forall p \forall q\forall X \forall Y \forall x \forall y   (\{X\} \subseteq [\ni] p \ \& \ \{X\} \subseteq [\not\ni\rangle p \ \& \ \{Y\} \subseteq [\ni] p\ \& \ \{Y\} \subseteq [\not\ni\rangle p\ \& \ q \subseteq \{x\}^c \ \& \ \{y\} \subseteq q $ & \\
      & \mc{2}{r}{$\Rightarrow \top \subseteq (\{X\}{\rhd}\{x\}^c)\lor (\{Y\}{\rhd}\neg \{y\}) $  $(\star)$ \ \ \ \ Splitting} \\ 
iff \ & $\forall p \forall q\forall X \forall Y \forall x \forall y   (\{X\} \subseteq [\ni] p \ \& \ p \subseteq [\not\in\rangle \{X\} \ \& \ \{Y\} \subseteq [\ni] p\ \& \ p \subseteq [\not\in\rangle \{Y\} \ \& \ q \subseteq \{x\}^c \ \& \ \{y\} \subseteq q $ & \\
      & \mc{2}{r}{$\Rightarrow \top \subseteq (\{X\}{\rhd}\{x\}^c)\lor (\{Y\}{\rhd}\neg \{y\}) $ \ \ \ \ Residuation} \\ 
iff \ & $\forall X \forall Y \forall x \forall y   (\{X\} \lor \{Y\} \subseteq [\ni] ([\not\in\rangle \{X\} \land [\not\in\rangle \{Y\})   \ \& \ \{y\} \subseteq \{x\}^c $ & \\
     & \mc{2}{r}{$\Rightarrow \top \subseteq (\{X\}{\rhd}\{x\}^c)\lor (\{Y\}{\rhd}\neg \{y\}) $ \ \ \ \ Ackermann} \\
iff \ & \mc{2}{l}{$\forall X \forall Y \forall x    (\{X\} \lor \{Y\} \subseteq [\ni] ([\not\in\rangle \{X\} \land [\not\in\rangle \{Y\})   \Rightarrow \forall y( \{y\} \subseteq \{x\}^c \Rightarrow \top \subseteq (\{X\}{\rhd}\{x\}^c)\lor (\{Y\}{\rhd}\neg \{y\})) $} \\
\mc{3}{r}{Currying} \\ 
iff \ & $\forall X \forall Y \forall x    (\{X\} \lor \{Y\} \subseteq [\ni] ([\not\in\rangle \{X\} \land [\not\in\rangle \{Y\})   \Rightarrow \top \subseteq (\{X\}{\rhd}\{x\}^c)\lor (\{Y\}{\rhd}\neg \{x\}^c)) $  &  \\
iff \ & \mc{2}{l}{$\forall X \forall Y \forall x[(\forall y(XR_\ni y\ \text{ or }\ YR_\ni y)\Rightarrow\lnot yR_{\not\in} X\ \&\ \lnot yR_{\not\in} Y)\ $} \\
      & \mc{2}{r}{$\Rightarrow \forall y(\lnot T_f(y,X,x)\ \text{ or }\ (\forall z( T_f(y,Y,z)\Rightarrow z=x)))]$ \ \ \ \ Standard translation} \\
iff \ & \mc{2}{l}{$\forall X \forall Y \forall x[(\forall y(y\in X \text{ or }\ y\in Y)\Rightarrow y\in X\ \&\ y\in Y)\ $} \\
      & \mc{2}{r}{$\Rightarrow \forall y(x\notin f(y,X)\ \text{ or }\ (\forall z( z\in f(y,Y)\Rightarrow z=x)))]$ \ \ \ \ Relation interpretation} \\
iff \ & $\forall X \forall Y \forall x[(X\cup Y\subseteq  X\cap Y)\ \Rightarrow \forall y(x\notin f(y,X)\ \text{ or }\ (\forall z( z\in f(y,Y)\Rightarrow z=x)))]$  & \\
iff \ & $\forall X \forall Y \forall x[X=Y \Rightarrow \forall y(x\notin f(y,X)\ \text{ or }\ (\forall z( z\in f(y,Y)\Rightarrow z=x)))]$  & \\	
iff \ & $\forall X \forall x\forall y[(x\notin f(y,X)\ \text{ or }\ (\forall z( z\in f(y,X)\Rightarrow z=x)))]$  & \\
iff \ & $\forall X \forall x\forall y[(x\in f(y,X)\ \Rightarrow\ f(y,X)=\{x\})]$  & \\
iff \ & $\forall X \forall y[|f(y,X)|\leq 1]$ & \\		
\end{tabular}
\end{flushleft}
}}

{\small{
\begin{flushleft}
\begin{tabular}{clr}
\mc{3}{l}{CN.\ \, $\mathbb{H}\models(p > q) \lor (q > p ) \ \rightsquigarrow\ (\dboxni p \wedge\drhdnni p)\mtra q )\lor  ( (\dboxni q \wedge\drhdnni q)\mtra p$} \\
\hline
    \ & $\top \subseteq ( (\dboxni p \wedge\drhdnni p)\mtra q )\lor  ( (\dboxni q \wedge\drhdnni q)\mtra p)$&\\
iff \ & $\forall p \forall q \forall x \forall y \forall X \forall Y  (\{X\} \subseteq [\ni] p\dand[\not\ni\rangle p \ \& \ \{Y\} \subseteq [\ni] q\dand[\not\ni\rangle q \ \& \ q \subseteq \{x\}^c \ \& \   p \subseteq \{y\}^c$ & \\
      & \mc{2}{r}{$\Rightarrow \top \subseteq (\{X\}{\rhd}\{x\}^c)\lor (\{Y\}{\rhd}\{y\}^c) $ \ \ \ \ first approx.} \\
      iff \ & $\forall p \forall q \forall x \forall y \forall X \forall Y   (\{X\} \subseteq [\ni] p \ \& \ \{X\} \subseteq [\not\ni\rangle p \ \& \ \{Y\} \subseteq [\ni] q  \ \& \ \{Y\} \subseteq [\not\ni\rangle q \ \& \ q \subseteq \{x\}^c \ \& \   p \subseteq \{y\}^c$ & \\
      & \mc{2}{r}{$\Rightarrow \top \subseteq (\{X\}{\rhd}\{x\}^c)\lor (\{Y\}{\rhd}\{y\}^c) $ \ \ \ \  Splitting} \\
            iff \ & $\forall p \forall q \forall x \forall y \forall X \forall Y  (\{X\} \subseteq [\ni] p \ \& \ p \subseteq [\not\in\rangle\{X\} \ \& \ \{Y\} \subseteq [\ni] q  \ \& \ q \subseteq [\not\in\rangle\{Y\} \ \& \ q \subseteq \{x\}^c \ \& \   p \subseteq \{y\}^c$ & \\
      & \mc{2}{r}{$\Rightarrow \top \subseteq (\{X\}{\rhd}\{x\}^c)\lor (\{Y\}{\rhd}\{y\}^c) $ \ \ \ \  Residuation} \\
         iff \ & $ \forall x \forall y\forall X \forall Y   (\{X\} \subseteq [\ni] ( [\not\in\rangle\{X\}  \cap \{y\}^c)  \& \ \{Y\} \subseteq [\ni]( [\not\in\rangle\{Y\} \cap \{x\}^c )\ \& \  $ & \\
      & \mc{2}{r}{$\Rightarrow \top \subseteq (\{X\}{\rhd}\{x\}^c)\lor (\{Y\}{\rhd}\{y\}^c) $ \ \ \ \  Ackermann} \\
              iff \ & $ \forall x \forall y \forall X \forall Y (\{X\} \subseteq (R_\ni^{-1}[R_{\not\ni}[\{X\}]  \cap \{y\}^c  ])^c     \& \ \{Y\} \subseteq (R_\ni^{-1}[R_{\not\ni}[\{Y\}]   \cap \{x\}^c ])^c   \ \& \  $ & \\
      & \mc{2}{r}{$\Rightarrow \top \subseteq ((T_f^{(0)}[\{X\}, \{x\}])^c)\lor ((T_f^{(0)}[\{Y\}, \{y\}])^c) $ \ \ \ \  Standard translation} \\
           iff \ & $\forall x \forall y \forall X \forall Y  (\{X\} \subseteq (R_\ni^{-1}[X^c \cap \{y\}^c  ])^c     \& \ \{Y\} \subseteq (R_\ni^{-1}[Y^c\cap \{x\}^c ])^c   \ \& \  $ & \\
      & \mc{2}{r}{$\Rightarrow \top \subseteq ((T_f^{(0)}[\{X\}, \{x\}])^c)\lor ((T_f^{(0)}[\{Y\}, \{y\}])^c) $ \ \ \ \ } \\ 
      iff \ & $\forall x \forall y \forall X \forall Y  ( X\subseteq X\cup\{y\}   \& \ Y \subseteq Y\cup \{x\}\Rightarrow \top \subseteq \{z\mid x \in f(z,X)\}^c \cup \{z\mid y \in f(z,Y)\}^c )$  & \\
            iff \ & $\forall z  \forall x \forall y \forall X \forall Y  [  x \not \in f(z,X) \text{~or~} y \not \in f(z,Y)] $  & \\
\end{tabular}
\end{flushleft}
}}

{\small{
\begin{flushleft}
\begin{tabular}{clr}
\mc{3}{l}{T.\ \, $\mathbb{H}\models(\bot >\neg p)\to p \ \rightsquigarrow\ ((\dboxni \bot \wedge\drhdnni \bot)\mtra \neg p) \subseteq p $} \\
\hline
    \ & $((\dboxni \bot \wedge\drhdnni \bot)\mtra \neg p) \subseteq p $ & \ \\
iff \ & $((\dboxni \bot \wedge\drhdnni \bot)\mtra \neg \bot) \subseteq \bot $  &  Variable elimination\\
iff \ & $(\{\emptyset\})\mtra \neg \bot) \subseteq \bot $  &  \\
iff \ & $\{z \mid \forall X \forall x(T_f(z,X,x)\to X \in \{\emptyset\} \ \& \ x \in W)\} \subseteq \bot $  &  \\
iff \ & $\forall z \exists x[ x \in f(z,\emptyset) ]$  &  \\

\end{tabular}
\end{flushleft}
}}

\end{document}